\documentclass[12pt,A4,leqno]{amsart}
\usepackage{amsfonts,bm}
\usepackage{mathrsfs}
\usepackage[T1]{fontenc}
\usepackage{amssymb}
\usepackage{amsmath,amscd}
\usepackage{color}
\usepackage{epsf}
\usepackage{graphicx}
\usepackage{xypic}
\usepackage{extarrows}
\usepackage{hyperref} 
\hypersetup{colorlinks,linkcolor={blue},citecolor={blue}} 
\usepackage[capitalise]{cleveref} 
\usepackage{booktabs}
\usepackage{multirow}
\usepackage{caption}
\usepackage{longtable}

\theoremstyle{plain}
\newtheorem{thm}{Theorem}[section]
\newtheorem{lem}[thm]{Lemma}

\newtheorem{cor}[thm]{Corollary}

\theoremstyle{definition}
\newtheorem{defi}[thm]{Definition}
\newtheorem{rem}[thm]{Remark}

\newcommand{\Z}{\mathbb Z}
\newcommand{\F}{\mathbb F}
\newcommand{\nn}{\vskip 0.2cm}
\newcommand{\n}{\vskip 0.1cm}

\newcommand{\ZZ}{\mathcal Z}
\newcommand{\link}{\mathrm{link}}
\renewcommand{\star}{\mathrm{star}}
\newcommand{\mdim}{\mathrm{mdim}}
\renewcommand{\geq}{\geqslant}
\renewcommand{\leq}{\leqslant}
\renewcommand{\emptyset}{\varnothing}

\newcommand{\wt}{\widetilde}
\renewcommand{\subset}{\subseteq}

\begin{document}

\title [\ ] {On Stanley-Reisner Rings with Minimal 
Betti Numbers}

\author{Pimeng Dai*, \  Li Yu**}
\address{*School of Mathematics, Nanjing University, Nanjing, 210093,
P.R.China}
 \email{pimengdai@gmail.com}
\address{**School of Mathematics, Nanjing University, Nanjing, 210093,
P.R.China}
 \email{yuli@nju.edu.cn}


\keywords{Stanley-Reisner ring, Betti number, bigraded Betti number, moment-angle complex, weakly taut simplicial complex, Taylor resolution, minimal non-face}

\subjclass[2020]{13F55, 13D02, 05E40, 05E45}

\begin{abstract}
We classify simplicial complexes with a given number of vertices whose Stanley-Reisner ring has the minimal sum of Betti numbers. The Betti numbers of the Stanley-Reisner rings of such kind of simplicial complexes are given by the binomial coefficients. We obtain a topological characterization of these simplicial complexes $K$ by showing that any full subcomplex of $K$ is homotopy equivalent either to a point or to a sphere. Moreover, we prove that
 such kind of simplicial complexes coincide with simplicial complexes having a minimal Taylor resolution.
This allows us to characterize these simplicial complexes in a purely combinatorial way.
 \end{abstract}

\maketitle

 \section{Introduction}
  Let $K$ be a simplicial complex whose vertex set
  is denoted by 
   $$V(K)=[m]=\{v_1,\cdots, v_m\}.$$  The \emph{Stanley-Reisner ring} of $K$ over a field
$\mathbb{F}$ (see Stanley~\cite{Stanley07}) is 
\[ \mathbb{F}[K]= \mathbb{F}[v_1,\cdots,v_m] \slash \mathcal{I}_K\]
where $\mathcal{I}_K$ is the ideal in the polynomial ring $\mathbb{F}[v_1,\cdots,v_m]$ generated by all the squarefree monomials
$v_{i_1}\cdots v_{i_s}$ where $\{v_{i_1},\cdots , v_{i_s}\}$ is not a simplex of $K$. The ideal $\mathcal{I}_K$ is called
the \emph{Stanley-Reisner ideal} of $K$. Since $\mathbb{F}[K]$ is naturally a module over
$\mathbb{F}[v_1,\cdots,v_m]$, by the standard construction in
homological algebra, we obtain a canonical 
algebra $\mathrm{Tor}_{\mathbb{F}[v_1,\cdots, v_m]}(\mathbb{F}[K],\mathbb{F})$ from $\mathbb{F}[K]$, where $\mathbb{F}$
is considered as the trivial $\mathbb{F}[v_1,\cdots, v_m]$-module.
Moreover, there is a bigraded $\mathbb{F}[v_1,\cdots,v_m]$-module structure on $\mathrm{Tor}_{\mathbb{F}[v_1,\cdots, v_m]}(\mathbb{F}[K],\mathbb{F})$ (see~\cite{Stanley07}):   
  \[ \mathrm{Tor}_{\mathbb{F}[v_1,\cdots, v_m]}(\mathbb{F}[K],\mathbb{F}) = \bigoplus_{i,j\geq 0} \mathrm{Tor}^{-i,2j}_{\mathbb{F}[v_1,\cdots, v_m]}(\mathbb{F}[K],\mathbb{F}) \] 
  where $\mathrm{deg}(v_{\ell})=2$ for each $1\leqslant \ell \leqslant m$. The integers 
  \begin{equation} \label{Equ:Def-Bigraded-Betti}
   \beta^{-i,2j}(\mathbb{F}[K]) := \dim_{\mathbb{F}}\mathrm{Tor}^{-i,2j}_{\mathbb{F}[v_1,\cdots, v_m]}(\mathbb{F}[K],\mathbb{F})
   \end{equation}
  are called
the \emph{bigraded Betti numbers} of $\mathbb{F}[K]$.
   Note that unlike the ordinary Betti numbers of $K$, bigraded Betti numbers of $\mathbb{F}[K]$ are not topological invariants, but only combinatorial invariants of $K$ in general.

   \begin{defi} \label{Def:Bigraded-Betti}
   The \emph{$i$-th Betti number} of the Stanley-Reisner ring $\mathbb{F}[K]$ is
   \begin{equation} \label{Equ:Def-Betti}
   \beta^{-i}(\mathbb{F}[K]) = \sum_{j\geq 0} \beta^{-i, 2 j}(\mathbb{F}[K]).
   \end{equation}    
   The \emph{total Betti number} of $\mathbb{F}[K]$ is
       \begin{equation} \label{Equ:Def-Total-Betti}
         \widetilde{D}(K;\mathbb{F}) = \sum_{i\geq 0}  \beta^{-i}(\mathbb{F}[K])  = \sum_{i,j\geq 0} \beta^{-i,2j}(\mathbb{F}[K]) = \dim_{\mathbb{F}} \mathrm{Tor}_{\mathbb{F}[v_1,\cdots, v_m]}(\mathbb{F}[K],\mathbb{F}). 
         \end{equation}
  \end{defi}
   \begin{rem}
      In many literatures, people also use 
$\beta_{i,j}(\mathbb{F}[K])$ and $\beta_i(\mathbb{F}[K])$ to refer to bigraded Betti numbers and Betti numbers of $\mathbb{F}[K]$, respectively. Our notations in~\eqref{Equ:Def-Bigraded-Betti} and~\eqref{Equ:Def-Betti} are taken from toric topology (see Buchstaber and Panov~\cite{BP15}) which can avoid confusions with the notations for the ordinary Betti numbers of a simplicial complex.
\end{rem}

    In this paper, we focus our study on simplicial complexes whose Stanley-Reisner rings have the minimum possible total Betti number. 
 Note that in some earlier researches, people 
  have studied whether the set of bigraded Betti sequences of monomial ideals attaining a given Hilbert function has a unique minimal element with respect to the lexicographic ordering, see Charalambous and Evans~\cite{CharEvan94}, Richert~\cite{Rich01},
Dodd, Marks, Meyerson and Richert~\cite{DMMR07} and
 Ragusa and Zappal\'a~\cite{RagZap05}. But our attention here is a little different. The reader is also referred to Boocher and Grifo~\cite{BooGri22} for a survey of lower bounds of Betti numbers of general finitely generated  modules over a polynomial ring. \n

     For convenience, we introduce the following notation.
  \begin{itemize}
  \item For a pair of integers $(m, d)$ with $ d < m$, let $\Sigma(m, d)$ denote the set of all 
 $d$-dimensional simplicial complexes with vertex set $[m]=\{v_1,\cdots, v_m\}$. By abuse of notation, we consider the \emph{irrelevant complex} $\{\varnothing\}$ as the only element of $\Sigma(0,-1)$. \n
 
\item Let $
\Sigma^{min}_{\mathbb{F}}(m, d)=\Big\{K \in \Sigma(m, d) \mid \widetilde{D}(K;\mathbb{F})= \underset{L \in \Sigma(m, d)}{\min} \widetilde{D}(L;\mathbb{F}) \Big\} 
$. 
 \end{itemize}

We call any member of $\Sigma^{min}_{\mathbb{F}}(m, d)$ a
\emph{$\widetilde{D}$-minimal simplicial complex} (over $\mathbb{F}$).
    \n
    
   A basic fact related to this problem is the following inequality:
  \begin{equation} \label{Equ:Binom-1}
   \beta^{-i}(\mathbb{F}[K]) \geq \binom{m-\dim(K)-1}{i},\  \text{for any}\ 0\leq i\leq m-\dim(K)-1, 
   \end{equation}  
  which is a consequence of Evans and Griffith~\cite[Corollary 2.5]{EvanGriff88}.
  This result is related to the
  Buchsbaum-Eisenbud-Horrocks conjecture (see Section~\ref{Sec:Lower-Bound}).
  It follows from~\eqref{Equ:Binom-1} that 
  \begin{equation} \label{Equ:uto-dim}
    \widetilde{D}(K;\mathbb{F}) \geq 2^{m-\mathrm{dim}(K)-1}.
\end{equation}
 The inequality~\eqref{Equ:uto-dim} was also obtained in Ustinovskii~\cite{Uto12} and in Cao and L\"u~\cite{CaoLu12} by some other methods. In particular, 
 $\widetilde{D}(K;\mathbb{F}) = 2^{m-\dim(K)-1}$ if and only if $\beta^{-i}(\mathbb{F}[K])=\binom{m-\dim(K)-1}{i}$ for every $0\leq i\leq m-\dim(K)-1$.\n
 
 The above lower bound of $\widetilde{D}(K;\mathbb{F})$ motivates us to define the following notion.
 
  \begin{defi}[Taut Simplicial Complex] \label{Def:Taut-Complex}
  A simplicial complex $K$ with $m$ vertices is called 
  \emph{taut} (over $\mathbb{F}$) if $\widetilde{D}(K;\mathbb{F}) = 2^{m-\dim(K)-1}$.
 By our conventions, the irrelevant complex $\{\varnothing\}$ is also taut.
   \end{defi}
            
   Moreover, there is a stronger lower bound of $\widetilde{D}(K;\mathbb{F})$ which was obtained in
  Ustinovskii~\cite[Theorem 3.2]{Uto11} as follows: 
\begin{equation} \label{Equ:uto-mdim}
  \widetilde{D}(K;\mathbb{F}) \geq 2^{m-\mathrm{mdim}(K)-1},
\end{equation}
 where $\mdim(K)$ is the minimal dimension of the maximal simplices of $K$.
 It is clear that $\mdim(K)\leq \dim(K)$.
 In fact, the inequality~\eqref{Equ:uto-mdim} can also be derived from the following theorem which refines the lower bounds of $\beta^{-i}(\mathbb{F}[K])$ in~\eqref{Equ:Binom-1}. 
  
 \begin{thm}[see Charalambous~\cite{Char91}]\label{Thm:Main-Binom}
  For any simplicial complex $K$ with $m$ vertices,
   \begin{equation*} \label{Equ:Binom-2}
  \beta^{-i}(\mathbb{F}[K]) \geq \binom{m-\mdim(K)-1}{i}, \ \text{for every}\ 0\leq i \leq m-\mdim(K)-1. 
   \end{equation*} 
   \end{thm}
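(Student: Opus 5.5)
The plan is to localize the minimal free resolution of $\mathbb{F}[K]$ at a suitable prime ideal, where the module becomes the residue field of a regular local ring. There its Betti numbers are computed by the Koszul complex and equal exactly the binomial coefficients in the statement.

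Write $S=\mathbb{F}[v_1,\cdots,v_m]$. Choose a maximal simplex $\sigma$ of $K$ with $\dim\sigma=\mdim(K)$, so that $|\sigma|=\mdim(K)+1$. Put $U=[m]\setminus\sigma$ and $n=|U|=m-\mdim(K)-1$. Let $P\subset S$ be the prime ideal generated by the variables $v_j$ with $j\in U$. Then $S_P$ is a regular local ring of dimension $n$ with maximal ideal $PS_P$, since the $v_j$, $j\in U$, form a regular system of parameters.

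The first step is to identify the localization $\mathbb{F}[K]_P=S_P/\mathcal{I}_KS_P$.

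\begin{itemize}
\item Since $\sigma$ is maximal, for every $j\in U$ the set $\sigma\cup\{v_j\}$ is a non-face. Hence $v_j\prod_{v\in\sigma}v\in\mathcal{I}_K$.
\item The monomial $\prod_{v\in\sigma}v$ does not lie in $P$, so it is a unit in $S_P$. It follows that $v_j\in\mathcal{I}_KS_P$ for all $j\in U$, that is, $PS_P\subset \mathcal{I}_KS_P$.
\item Conversely, every generator of $\mathcal{I}_K$ is a squarefree monomial on a non-face. Such a non-face is not contained in the face $\sigma$, so it involves some $v_j$ with $j\in U$, and the generator lies in $P$. Thus $\mathcal{I}_K\subset P$.
\end{itemize}

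Together these give $\mathbb{F}[K]_P\cong S_P/PS_P=:k$, the residue field of $S_P$.

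The second step compares Betti numbers. Let $F_\bullet$ be the minimal graded free resolution of $\mathbb{F}[K]$ over $S$, with $\operatorname{rank}F_i=\beta^{-i}(\mathbb{F}[K])$. Localization is exact, so $(F_\bullet)_P$ is a free resolution of $k$ over $S_P$. It need not be minimal, but it has the same ranks. Any free resolution over a local ring has $i$-th rank at least $\dim_k\mathrm{Tor}_i^{S_P}(k,k)$, because a minimal resolution is a direct summand of any free resolution. Since $S_P$ is regular of dimension $n$, the Koszul complex on $v_j$, $j\in U$, is a minimal resolution of $k$, so $\dim_k\mathrm{Tor}_i^{S_P}(k,k)=\binom{n}{i}$. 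This yields
\[
\beta^{-i}(\mathbb{F}[K])=\operatorname{rank}F_i\geq \binom{n}{i}=\binom{m-\mdim(K)-1}{i}.
\]

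The main point needing care is the identification $\mathbb{F}[K]_P\cong k$. This is exactly where maximality of $\sigma$ is used, and the choice of $\sigma$ of minimal dimension is what makes $n$ as large as possible. The remaining facts are standard commutative algebra: exactness of localization, the direct-summand property of minimal resolutions, and the Koszul resolution of the residue field of a regular local ring.
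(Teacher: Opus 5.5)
Your argument is correct, and it takes a genuinely different route from the paper.

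\textbf{The paper's route.} The paper stays on the combinatorial--topological side throughout. It picks a vertex $v\in V_{\mdim}(K)$ and applies Mayer--Vietoris to $K|_{J\cup\{v\}}=\star_K v|_{J\cup\{v\}}\cup K|_J$, whose intersection is $\link_K v|_J$. Summing over all $J$ via Hochster's formula gives $\beta^{-i}(\mathbb{F}[K])\geq\sum_{s}\binom{m-m_v-1}{s}\beta^{-(i-s)}(\mathbb{F}[\link_K v])$. It then inducts on the number of vertices, using $\mdim(\link_K v)=\mdim(K)-1$ and the Vandermonde identity.

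\textbf{Your route.} You localize at the minimal prime of $\mathcal{I}_K$ attached to a facet of minimum dimension. There $\mathbb{F}[K]$ becomes the residue field of an $n$-dimensional regular local ring, and you use that Betti numbers cannot increase under localization.

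\textbf{Comparison.} Your argument is shorter and non-inductive. It needs neither Hochster's formula nor anything about the Buchsbaum--Eisenbud--Horrocks conjecture beyond the Koszul complex. It also works verbatim for any radical ideal $I\subset S$: it gives $\beta_i(S/I)\geq\binom{h}{i}$ for every minimal prime of height $h$, i.e.\ a bound by the big height.

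The paper's argument buys two things. First, it produces the intermediate inequalities (a)--(c) relating $K$, $K\setminus v$ and $\link_K v$, even at the bigraded level. Second, and more importantly, its equality case is transparent: equality forces the maps $\widetilde{H}_*(\link_K v|_J)\to\widetilde{H}_*(K|_J)$ to be surjective for every $J$. That is exactly the criterion of Theorem~\ref{Thm:Weakly-Taut}, on which the paper's whole structure theory of weakly taut complexes rests. In your approach, equality only says that $(F_\bullet)_P$ is already a minimal resolution, which is much less directly usable for that purpose.

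\textbf{A small simplification.} Instead of invoking the direct-summand property of minimal resolutions, you can simply note that $\operatorname{rank}F_i\geq\dim_k H_i((F_\bullet)_P\otimes k)=\dim_k\mathrm{Tor}_i^{S_P}(k,k)$.
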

   
 The above result was not written as a formal theorem 
 in~\cite{Char91}, but was contained in the remark  
of~\cite[Theorem 3]{Char91} at the end of the paper.   
 We will give an alternative proof of Theorem~\ref{Thm:Main-Binom} in
 Section~\ref{Sec:Lower-Bound} (see  Theorem~\ref{Thm:Binom-Lower-Bound}).
 Our proof does not use results in~\cite{EvanGriff88} and~\cite{Char91} or any known result on
 the Buchsbaum-Eisenbud-Horrocks conjecture. In fact, we only use the Hochster's formula for the bigraded Betti numbers and some simple Mayer-Vietoris argument. 
\n

 Parallely to Definition~\ref{Def:Taut-Complex}, we introduce the following notion.

  \begin{defi}[Weakly Taut Simplicial Complex] \label{Def:Weakly-Taut-Complex}
   A simplicial complex $K$ with $m$ vertices is called 
  \emph{weakly taut} (over $\mathbb{F}$)
    if $\widetilde{D}(K;\mathbb{F}) = 2^{m-\mathrm{mdim}(K)-1}$.
  \end{defi}
  
   Then by Theorem~\ref{Thm:Main-Binom},  
   a simplicial complex $K$ with $m$ vertices
is weakly taut if and only if
   \begin{equation} \label{Equ:Pattern-WK}
    \beta^{-i}(\mathbb{F}[K]) =  \binom{m-\mdim(K)-1}{i},\ \text{for every}\ 0\leq i \leq m-\mdim(K)-1.
    \end{equation}

  So when $K$ is weakly taut, the \emph{projective dimension} of $\mathbb{F}[K]$ is 
  $$\mathrm{pdim}(\mathbb{F}[K])=m-\mdim(K)-1.$$
   Then according to Auslender-Buchsbaum theorem, the \emph{depth} of $\mathbb{F}[K]$
 is 
 $$\mathrm{depth}(\mathbb{F}[K])=m-\mathrm{pdim}(\mathbb{F}[K])=\mdim(K)+1.$$ 
 On the other hand, it is well known that
 the \emph{Krull dimension}
 of $\mathbb{F}[K]$ is  (see Miller and Sturmfels~\cite[Proposition 7.28]{MillSturm04}):
 $$\mathrm{Krdim}(\mathbb{F}[K])=\dim(K)+1.$$
 So when $K$ is weakly taut, $K$ is \emph{Cohen-Macaulay} if and only if
$\mdim(K)=\dim(K)$ (i.e. $K$ is pure).
 So the following statements are equivalent:  \n 
   \begin{itemize}
   \item $K$ is a taut simplicial complex.\n
   
   \item $K$ is weakly taut and pure.\n
   
   \item $K$ is weakly taut and Cohen-Macaulay.
   \end{itemize}
   \n

 We can classify all the taut simplicial complexes
 in the following theorem.
 For any integer $m\geq 1$, let $\Delta^{[m]}$ denote the $(m-1)$-dimensional simplex with vertex set $[m]$.
 Then its boundary $\partial \Delta^{[m]}$ is a simplicial sphere of dimension $m-2$. In particular,
 $\partial\Delta^{[1]}=\{\varnothing\}$. In addition, for any subset $J\subseteq [m]$, we use
 $\Delta^J$ to denote the simplex spanned by $J$ in $\Delta^{[m]}$.
 
  \begin{thm}[see Theorem~\ref{Thm:Main-1-Repeat}] \label{Thm:Main-1}
    For a finite simplicial complex
     $K$, the following statements are equivalent:
     \begin{itemize}
     \item[(a)] $K$ is taut over a field $\mathbb{F}$.\n
     \item[(b)] $\mathbb{F}[K]$ is a complete intersection.\n
     \item[(c)] $K$ is isomorphic to $ \partial \Delta^{[n_1]}*\cdots*\partial \Delta^{[n_k]}$ or $\Delta^{[r]} * \partial \Delta^{[n_1]}*\cdots*\partial \Delta^{[n_k]}$ for some positive integers $n_1,\cdots, n_k$ and $r$.     \end{itemize}
  \end{thm}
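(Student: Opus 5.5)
The plan is to prove (c)$\Rightarrow$(b)$\Rightarrow$(a)$\Rightarrow$(c), with the last implication carrying almost all of the work.

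\textbf{(c)$\Rightarrow$(b).} For $K=\partial\Delta^{[n_1]}*\cdots*\partial\Delta^{[n_k]}$ (possibly joined with $\Delta^{[r]}$), the vertex sets of the factors are disjoint. The minimal non-faces are exactly the sets $[n_j]$, one for each sphere factor. Hence $\mathcal{I}_K$ is generated by $k$ monomials with pairwise disjoint supports, which form a regular sequence. Since $\mathcal{I}_K$ has height $k$, $\mathbb{F}[K]$ is a complete intersection.

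\textbf{(b)$\Rightarrow$(a).} If $\mathcal{I}_K$ is generated by a regular sequence of length $c=\operatorname{height}\mathcal{I}_K=m-\dim(K)-1$, then the Koszul complex on these generators is a minimal free resolution. Therefore $\beta^{-i}(\mathbb{F}[K])=\binom{c}{i}$ and $\widetilde{D}(K;\mathbb{F})=2^{m-\dim K-1}$.

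\textbf{(a)$\Rightarrow$(c).} Suppose $\widetilde D(K;\mathbb{F})=2^{m-\dim K-1}$. Since $\mdim K\le \dim K$, inequality \eqref{Equ:uto-mdim} forces $\mdim K=\dim K$, so $K$ is pure. Equality in Theorem~\ref{Thm:Main-Binom} gives $\beta^{-1}(\mathbb{F}[K])=m-\dim K-1=:c$, i.e. $\mathcal{I}_K$ has exactly $c$ minimal monomial generators. The aim is to show that the minimal non-faces are pairwise disjoint. Once this holds, $K$ is the join of the boundaries of the simplices on these minimal non-faces with the full simplex on the remaining vertices (the ghost-free case gives $r=0$).

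I would argue by induction on $m$ using a vertex $v$ with Hochster's formula
\[
\beta^{-i,2j}(\mathbb{F}[K])=\sum_{|J|=j}\dim\widetilde H^{j-i-1}(K_J;\mathbb{F}).
\]
If $v$ lies in no minimal non-face, then $K=v*K'$ and we reduce to $K'$. Otherwise we compare with the full subcomplex $K_{[m]\setminus v}$ and with $\link_K v$. Full subcomplexes satisfy $\beta^{-i,2j}(\mathbb{F}[K_J])\le\beta^{-i,2j}(\mathbb{F}[K])$, because Hochster's formula for $K_J$ sums over a subset of the terms. The Mayer--Vietoris sequence for $K=K_{[m]\setminus v}\cup \star(v)$ bounds the Betti numbers in terms of those of the deletion and the link.

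\textbf{Main obstacle.} The hard step is ruling out two minimal non-faces $\sigma,\tau$ with $\sigma\cap\tau\neq\varnothing$. The first thing I would try is a direct counting argument on the lcm-lattice of minimal non-faces. Since there are exactly $c$ generators and $\beta^{-i}=\binom{c}{i}$, the Taylor resolution, which has $\binom{c}{i}$ generators in homological degree $i$, must be minimal. If two generators $\sigma,\tau$ overlap, I would look for a third generator whose support lies in $\sigma\cup\tau$ and hence a non-minimal Taylor map. If no such generator exists, I would instead compute the degree of the pair $\{\sigma,\tau\}$ via Hochster's formula and find a contradiction with the count $c=m-\dim K-1$. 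The point is that overlapping supports lower the codimension: purity gives a facet missing one vertex from each non-face, so if non-faces overlap, fewer than $c$ vertices need to be removed to reach a face, contradicting $\dim K=m-c-1$.

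Concretely, $\dim K=m-c-1$ means some face omits only $c$ vertices, and every facet must omit at least one vertex from each of the $c$ non-faces. I would show this forces the $c$ non-faces to admit a system of distinct representatives hitting them with $c$ vertices for every facet, using purity. Combined with Taylor minimality, which says no generator is covered by the union of others' "private" parts, this should yield disjointness. I expect this combinatorial disjointness step to be the only delicate point.
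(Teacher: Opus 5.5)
Your proposal is correct, and for (a)$\Rightarrow$(c) it is considerably more direct than the paper's argument. The decisive step is already complete in your sentence about overlapping supports, so you can drop the induction, the Mayer--Vietoris comparison, the lcm-lattice idea, Taylor minimality and the distinct-representatives plan.

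Spelled out, the argument is two lines. Tautness forces equality in \eqref{Equ:Binom-1}, so $\mu(K)=\beta^{-1}(\mathbb{F}[K])=c=m-\dim(K)-1$; here Hochster's formula together with Lemma~\ref{Lem:Simplex}(b) shows that $\beta^{-1}$ counts minimal non-faces. Suppose $w\in N_i\cap N_j$ for two distinct minimal non-faces. Take $w$ together with one vertex from each of the remaining $c-2$ minimal non-faces. This gives a set $H$ with $|H|\le c-1$ that meets every minimal non-face. Hence $[m]\setminus H$ is a face with at least $m-c+1$ vertices, contradicting $\dim(K)=m-c-1$. Purity plays no role in this step: every face, not only a facet of a pure complex, misses a vertex of each non-face.

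The paper instead passes through weak tautness:
\begin{itemize}
\item It uses Theorem~\ref{Thm:Taylor-2}, whose implication (a)$\Rightarrow$(b) is a genuine induction relying on Corollary~\ref{Cor:mdimv-char}, Theorem~\ref{Thm:K-v-Join} and Corollary~\ref{Cor:two-vertex}, to obtain a minimal Taylor resolution.
\item It then applies the Ayzenberg/Iriye--Kishimoto normal form $K\cong K(\mathbf{N})$ of Theorem~\ref{Thm:IriKishi}.
\item It uses purity to get $\dim(K)=m-r-1$ from the facet $\mathbf{a}^c$.
\item Finally it exhibits the face $(\mathbf{a}^c\setminus\{w\})\cup\{v_i,v_j\}$. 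This is precisely the complement of your hitting set $\{w\}\cup\{v_k : k\neq i,j\}$, written in $K(\mathbf{N})$ coordinates.
\end{itemize}
The identification $\beta^{-1}(\mathbb{F}[K])=\mu(K)$ is what lets you bypass all of this. Applied to a weakly taut $K$, the same identification gives $\mu(K)=m-\mdim(K)-1$ at once. What the paper's longer route buys is only that taut complexes appear as a special case of its $K(\mathbf{N})$ classification.

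For (b)$\Leftrightarrow$(c) the paper simply cites the standard fact. Your Koszul-complex argument for (b)$\Rightarrow$(a) is fine: the generators lie in $\mathcal{I}_K\subseteq(v_1,\dots,v_m)$, so the Koszul complex is minimal, and their number is $\operatorname{height}\mathcal{I}_K=m-\dim(K)-1$.
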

  
   Here $*$ is the join of simplicial complexes.
   A graded algebra $\mathbb{F}[x_1,\cdots,x_m]\slash I$
   is said to be a \emph{complete intersection} if the ideal $I$ is generated by a regular sequence.
   The equivalence of (b) and (c) in  Theorem~\ref{Thm:Main-1} 
   is a well known fact. Indeed, the Stanley-Reisner ring $\mathbb{F}[K]$ is a complete intersection if and only if all the minimal non-faces of $K$ are pairwise disjoint, which is equivalent to (c). The equivalence of (a) and (c) will be proved in Theorem~\ref{Thm:Main-1-Repeat}.
   
\begin{rem}
If assuming the character of the field $\mathbb{F}$ is not $2$, the equivalence of (a) and (b) in Theorem~\ref{Thm:Main-1}
can also be derived from the main theorem of Walker~\cite{Walk17}. But our proof works for any field $\mathbb{F}$.
\end{rem}

  From Theorem~\ref{Thm:Main-1}, we can easily deduce the following statements:
   \begin{itemize}
 \item The tautness of a simplicial complex is independent on the coefficients. \n
 \item If $K$ is a taut simplicial complex, then
 the link of every simplex of $K$ and all the full subcomplexes of $K$ are also taut.\n
 
 \item  If $K\in \Sigma(m,d)$ is taut, it is necessary that $\left[ \frac{m-1}{2}\right]\leq d \leq m-1$. In particular, the equality
 $\left[ \frac{m-1}{2}\right]= d$ is achieved by $\partial \Delta^{[2]}*\partial \Delta^{[2]} *\cdots*\partial \Delta^{[2]}$ when $m$ is even and by $\Delta^{[1]}*\partial \Delta^{[2]}*\partial \Delta^{[2]} *\cdots*\partial \Delta^{[2]}$ when $m$ is odd. Conversely, for any $(m,d)$ 
 with $\left[ \frac{m-1}{2}\right]\leq d \leq m-1$, there always exists
 a taut simplicial complex $K\in \Sigma(m,d)$.
 \end{itemize}
 \n

 \begin{cor}
 If $\left[ \frac{m-1}{2}\right] \leq d \leq m-1$,
 the $\widetilde{D}$-minimal simplicial complexes in
 $\Sigma(m,d)$ are exactly all the taut simplicial complexes in $\Sigma(m,d)$.
 \end{cor}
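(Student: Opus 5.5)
The plan is to combine the universal lower bound \eqref{Equ:uto-dim} with an existence statement for taut complexes in the given range. For every $L\in\Sigma(m,d)$ we have $\dim(L)=d$, so \eqref{Equ:uto-dim} gives $\widetilde{D}(L;\mathbb{F})\geq 2^{m-d-1}$. Hence
\[
\min_{L\in\Sigma(m,d)}\widetilde{D}(L;\mathbb{F})\geq 2^{m-d-1},
\]
and it suffices to show that this bound is attained.

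Attainment comes from an explicit taut complex in $\Sigma(m,d)$; by Theorem~\ref{Thm:Main-1} this is the same as a join of the form in (c). Set $s=m-d-1$, so the hypothesis $[\frac{m-1}{2}]\leq d\leq m-1$ gives $0\leq s$ and $2s\leq m$ (check the cases $m$ even and $m$ odd separately). If $s=0$, take $K=\Delta^{[m]}$, which has $\widetilde{D}=1=2^0$. If $s\geq 1$, take
\[
K=\partial\Delta^{[2]}*\cdots*\partial\Delta^{[2]}*\partial\Delta^{[m-2s+2]}
\]
with $s-1$ factors $\partial\Delta^{[2]}$. This has $2(s-1)+(m-2s+2)=m$ vertices and dimension $(s-1)+(m-2s)=m-s-1=d$. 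Its minimal non-faces are pairwise disjoint, namely $s$ of them, so $\mathbb{F}[K]$ is a complete intersection. By Theorem~\ref{Thm:Main-1} ((b)$\Rightarrow$(a)), or directly from the Koszul complex on these $s$ generators, $K$ is taut, with $\widetilde{D}(K;\mathbb{F})=2^s$. We only need $m-2s+2\geq 2$, i.e. $2s\leq m$, which is exactly the hypothesis.

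Therefore the minimum of $\widetilde{D}$ over $\Sigma(m,d)$ equals $2^{m-d-1}$. Consequently $K\in\Sigma(m,d)$ is $\widetilde{D}$-minimal if and only if $\widetilde{D}(K;\mathbb{F})=2^{m-\dim(K)-1}$, that is, if and only if $K$ is taut (Definition~\ref{Def:Taut-Complex}).

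The only delicate step is the existence construction: verifying that the vertex count and dimension of the join are right, and that the range condition on $d$ is exactly what makes the last factor legitimate. Everything else is a direct consequence of \eqref{Equ:uto-dim}.
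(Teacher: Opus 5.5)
Your proposal is correct and follows the paper's route. The corollary is an immediate consequence of the lower bound \eqref{Equ:uto-dim} together with the existence of a taut complex in every $\Sigma(m,d)$ with $\left[\frac{m-1}{2}\right]\leq d\leq m-1$. Your explicit witness $\partial\Delta^{[2]}*\cdots*\partial\Delta^{[2]}*\partial\Delta^{[m-2s+2]}$, with the check that $2s\leq m$ is equivalent to the range condition, makes concrete the existence claim the paper states without detail after Theorem~\ref{Thm:Main-1}.
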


  When $d<\left[ \frac{m-1}{2}\right]$, it is hard to find all the $\widetilde{D}$-minimal simplicial complexes in $\Sigma(m,d)$. One reason is that the full subcomplexes of a $\widetilde{D}$-minimal simplicial complex may not be $\widetilde{D}$-minimal. For example, by an exhaustive calculation for all the $33$ members of
$\Sigma(5,1)$, we find that  
$\Sigma^{min}_{\mathbb{Q}}(5,1) =\{ K_{2,3}, C_5 \}$ (see Figure~\ref{p:K1K2}).
 But none of the full subcomplexes of $C_5$ on its four vertices are $\widetilde{D}$-minimal. So in general, we may not be able to construct a $\widetilde{D}$-minimal simplicial complex
with $m$ vertices from any $\widetilde{D}$-minimal simplicial complex with $m-1$ vertices by adding a new vertex and some faces containing the new vertex. \n

 In addition, when $d<\left[ \frac{m-1}{2}\right]$, 
 the Stanley-Reisner rings of two $\widetilde{D}$-minimal simplicial complexes in $\Sigma(m,d)$  may have different Betti numbers.
For example $\beta^{-3}(\mathbb{F}[K_{2,3}])=2$ while 
$\beta^{-3}(\mathbb{F}[C_5])=1$.
\n
 
  \begin{figure}[h]
         \includegraphics[width=0.44\textwidth]{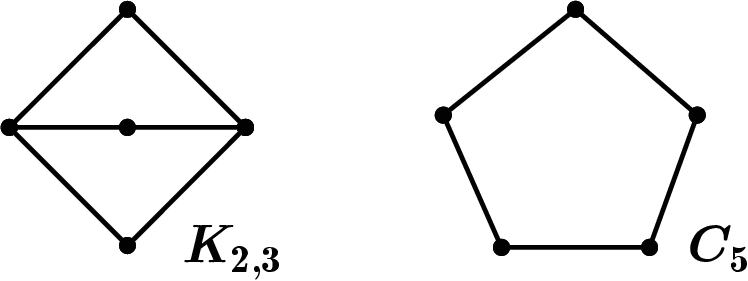}\\
          \caption{$\widetilde{D}$-minimal $1$-dimensional simplicial complexes with $5$ vertices}\label{p:K1K2}
      \end{figure}       
 
 There are a lot more weakly taut simplicial complexes than taut ones (see the appendix for the complete list of weakly taut simplicial complexes with no more than five vertices). The following are some
 special  properties of weakly taut simplicial complexes
 which allow us to characterize them from both
 topological and combinatorial perspectives.

 \begin{thm}[see Theorem~\ref{Thm:Subcomplex-Taut}]
 	Every full subcomplex of a weakly taut complex is weakly taut.
 \end{thm}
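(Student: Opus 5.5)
The plan is to reduce to deleting one vertex at a time and to compare the Hochster sums of $K$ and of $K-v$. Since a full subcomplex of a full subcomplex is again full, it suffices to prove the following: if $K$ is weakly taut on $[m]$ and $v\in[m]$, then the full subcomplex $K-v:=K_{[m]\setminus\{v\}}$ is weakly taut. Write $\mu=\mdim(K)$.

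\emph{Step 1 (splitting of Hochster's formula).} By Hochster's formula,
\[
\widetilde{D}(K;\mathbb{F})=\sum_{J\subseteq[m]}\dim_{\mathbb{F}}\widetilde{H}^{*}(K_J;\mathbb{F}),
\]
where $K_\varnothing$ contributes $1$. Split the sum according to whether $J$ contains $v$:
\[
\widetilde{D}(K;\mathbb{F})=\widetilde{D}(K-v;\mathbb{F})+S_v,\qquad S_v:=\sum_{J\subseteq[m]\setminus\{v\}}\dim_{\mathbb{F}}\widetilde{H}^{*}(K_{J\cup\{v\}};\mathbb{F}).
\]

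\emph{Step 2 (behaviour of $\mdim$).} A maximal simplex of $K-v$ is either a maximal simplex of $K$ not containing $v$, or of the form $\sigma\setminus\{v\}$ with $\sigma$ maximal in $K$. Hence $\mdim(K-v)\geq\mu-1$.

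\emph{Step 3 (case $\mdim(K-v)=\mu-1$).} Theorem~\ref{Thm:Main-Binom} (in the form \eqref{Equ:uto-mdim}) applied to $K-v$, together with Step 1, gives
\[
2^{(m-1)-(\mu-1)-1}\leq\widetilde{D}(K-v;\mathbb{F})\leq\widetilde{D}(K;\mathbb{F})=2^{m-\mu-1}.
\]
The two ends agree, so all inequalities are equalities and $K-v$ is weakly taut.

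\emph{Step 4 (case $\mdim(K-v)\geq\mu$).} Here it suffices to prove the key estimate
\[
S_v\geq 2^{m-\mu-2}.\qquad(\ast)
\]
Granting $(\ast)$, we get
\[
2^{(m-1)-\mdim(K-v)-1}\leq\widetilde{D}(K-v;\mathbb{F})=2^{m-\mu-1}-S_v\leq 2^{m-\mu-2}\leq 2^{(m-1)-\mdim(K-v)-1}.
\]
This forces equality throughout, so $\mdim(K-v)=\mu$ and $K-v$ is weakly taut.

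\emph{Proving $(\ast)$, the main obstacle.} I would imitate the Mayer--Vietoris argument the paper uses for Theorem~\ref{Thm:Main-Binom}. For $J\subseteq[m]\setminus\{v\}$ write
\[
K_{J\cup\{v\}}=K_J\cup \star_{K_{J\cup\{v\}}}(v),\qquad K_J\cap\star_{K_{J\cup\{v\}}}(v)=\link_{K}(v)_J.
\]
The star is contractible, so the Mayer--Vietoris sequence relates $\widetilde{H}^*(K_{J\cup\{v\}})$ to the maps $\widetilde{H}^*(K_J)\to\widetilde{H}^*(\link_K(v)_J)$. This rewrites $S_v$ as a sum over $J$ of dimensions of kernels and cokernels of these restriction maps.

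The most natural approach is induction on $m$. Fix a maximal simplex $\sigma$ of dimension $\mu$.
\begin{itemize}
\item If $v\notin\sigma$, the vertices of $[m]\setminus(\sigma\cup\{v\})$ can be peeled off one at a time. Each peeled vertex should at least double the count, the same doubling as in the proof of the $2^{m-\mu-1}$ bound, starting from the one nonzero class contributed by $J=\sigma$, namely $K_{\sigma\cup\{v\}}\not\simeq K_\sigma$ because $\sigma\cup\{v\}\notin K$.
\item If $v\in\sigma$, I would instead argue with $\link_K(v)$. Its $\mdim$ is at most $\mu-1$, and it lives on at most $m-1$ vertices, so the inductive lower bound applied to it should produce the needed $2^{m-\mu-2}$ classes.
\end{itemize}
The delicate point is ensuring that classes counted for different $J$ are genuinely independent and not cancelled by the connecting maps. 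This is where I expect most of the work to lie. If this direct route fails, the fallback is to show that the proof of Theorem~\ref{Thm:Main-Binom} already yields the stronger, vertex-relative bound $(\ast)$ for every vertex $v$.
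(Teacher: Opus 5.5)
Steps~1--3 are fine, and the case $\mdim(K-v)=\mu-1$ is handled correctly. Step~4, however, has a genuine gap in two places.

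\textbf{The inequality chain in Step~4 is wrong.} Its last link, $2^{m-\mu-2}\le 2^{(m-1)-\mdim(K-v)-1}$, holds only if $\mdim(K-v)\le\mu$. So even granting $(\ast)$, you have only treated $\mdim(K-v)=\mu$. The case $\mdim(K-v)>\mu$ does occur. Take $K$ with maximal faces $\{1,2,3\}$ and $\{1,4\}$; it is weakly taut with $m=4$, $\mu=1$, $\widetilde{D}(K)=4$. Let $v=4$. Then $K-v$ is a $2$-simplex, $\widetilde{D}(K-v)=1$ and $S_v=3\ge 2^{m-\mu-2}$, yet $\mdim(K-v)=2\neq\mu$, so your conclusion ``$\mdim(K-v)=\mu$'' is false here. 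When $\mdim(K-v)=\mu+k$ with $k\ge 1$, what you actually need is $S_v\ge 2^{m-\mu-1}-2^{m-\mu-k-2}$, which is just the theorem restated. The same complex with $v=2$, $\sigma=\{1,4\}$ also refutes your seed claim. There $K|_{\sigma\cup\{v\}}$ is a path, hence homotopy equivalent to $K|_\sigma$, so $J=\sigma$ contributes nothing to $S_v$.

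\textbf{The estimate $(\ast)$ is not proved, and the tools you propose cannot prove it.} Dimension counting in the Mayer--Vietoris sequences gives $\widetilde{\beta}_{n+1}(K|_{J\cup\{v\}})\ge\widetilde{\beta}_n(\link_K v|_J)-\widetilde{\beta}_n(K|_J)$. Summed over $J$, this is only $S_v\ge 2^{m-m_v-1}\widetilde{D}(\link_K v)-\widetilde{D}(K-v)$, i.e.\ Theorem~\ref{Thm:Binom-Lower-Bound}(c) again. Feeding in the inductive lower bound for $\link_K v$ therefore just reproduces $\widetilde{D}(K)\ge 2^{m-\mu-1}$, which you already know. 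What you need is an \emph{upper} bound on $\widetilde{D}(K-v)$, and pure counting does not supply one.

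\textbf{The missing idea is rigidity in the equality case.} For $w\in V_{\mdim}(K)$, weak tautness of $K$ forces every map $\widetilde{H}_*(\link_K w|_J)\to\widetilde{H}_*(K|_J)$ with $w\notin J$ to be surjective. This turns Theorem~\ref{Thm:Binom-Lower-Bound}(c) into an exact formula for $\widetilde{D}$ in terms of $\widetilde{D}(\link w)$. The paper uses this as follows:
\begin{enumerate}
\item Choose $w\in V_{\mdim}(K)$ with $w\neq v$. This is possible once $\mdim(K)\ge1$; the case $\mdim(K)=0$ is $\Delta^{[1]}\sqcup\Delta^{[m-1]}$ by Lemma~\ref{Lem:Disconnected}.
\item Note that $\link_{K-v}w=(\link_K w)-v$ is weakly taut, by induction applied to the smaller weakly taut complex $\link_K w$ (Corollary~\ref{Cor:bigrad-Formula}(d)).
\item Observe that the surjectivity condition restricts to all $J$ avoiding $v$.
\end{enumerate}
Theorem~\ref{Thm:Weakly-Taut} applied to $K-v$ at $w$ then concludes. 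The exact formula, combined with $\mdim(\link_{K-v}w)\ge\mdim(K-v)-1$, supplies precisely the upper bound your counting lacks. Note that the decomposition is made at a vertex of $V_{\mdim}(K)$ other than the deleted one, not at $v$ itself.
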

 
 \begin{thm}[see Theorem~\ref{Thm:Equiv}] \label{Thm:Equiv-0}
  If a simplicial complex $K$ is weakly taut over a field $\mathbb{F}$, then
  \begin{itemize}
  \item[(a)] Every full subcomplex of $K$ is
  homotopy equivalent either to a point or to a sphere;\n
  
  \item[(b)] $K$ is weakly taut over an arbitrary field. 
  \end{itemize}
\end{thm}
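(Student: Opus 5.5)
We will prove Theorem~\ref{Thm:Equiv-0} by induction on the number $m$ of vertices. The tools are Hochster's formula
\[
\beta^{-i,2j}(\mathbb{F}[K])=\sum_{J\subseteq[m],\,|J|=j}\dim_{\mathbb{F}}\widetilde{H}^{j-i-1}(K_J;\mathbb{F}),
\]
the lower bound of Theorem~\ref{Thm:Main-Binom} applied to every full subcomplex, and the fact (Theorem~\ref{Thm:Subcomplex-Taut}) that full subcomplexes of weakly taut complexes are weakly taut. Note that $\widetilde{D}(K;\mathbb{F})=\sum_{J}\dim\widetilde{H}^*(K_J;\mathbb{F})$, where $\widetilde H^*(\{\varnothing\})$ contributes $1$.

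\textbf{Step 1 (reduction to $K$ itself).} Since every full subcomplex $K_J$ is again weakly taut, by induction it suffices to show that a weakly taut $K$ itself is either contractible or homotopy equivalent to a sphere. Part (b) will then follow, because a complex with these homotopy types has the same total reduced homology dimension over every field.

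\textbf{Step 2 (the homology of $K$ has rank at most one).} Fix a vertex $v$ lying in a maximal simplex of minimal dimension $\mdim(K)$, and split the full subcomplexes according to whether $v\in J$.
\begin{itemize}
\item The sum over $J\not\ni v$ is $\widetilde{D}(K_{[m]\setminus v})$. By Theorem~\ref{Thm:Main-Binom}, this is at least $2^{m-1-\mdim(K_{[m]\setminus v})-1}$, and $\mdim(K_{[m]\setminus v})\le \mdim(K)$ up to the obvious adjustment.
\item The sum over $J\ni v$ I would compare to $\widetilde{D}$ of $\link(v)$ or of $K_{[m]\setminus v}$ via the Mayer--Vietoris sequence for $K_J=(K_{J\setminus v})\cup \star(v)_J$.
\end{itemize}
The intended outcome is that equality $\widetilde{D}(K)=2^{m-\mdim(K)-1}$ forces both halves to attain their minima. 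In particular the single term $J=[m]$ must satisfy $\dim\widetilde{H}^*(K;\mathbb{F})\le 1$, and similarly for every $K_J$.

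\textbf{Step 3 (upgrade to a homotopy equivalence).} It remains to show that $K$ is contractible or a homotopy sphere, not merely a mod-$\mathbb{F}$ homology point or sphere.
\begin{itemize}
\item First I would get simple connectivity. Full subcomplexes on few vertices are weakly taut, so the $1$-skeleton structure should be restricted, e.g.\ to forms like $\partial\Delta^{[n]}$ or joins. Alternatively, I would show $K$ is a suspension-like union $K=K_{[m]\setminus v}\cup_{\link(v)}\star(v)$, where the pieces are, by induction, contractible or spheres of suitable dimensions. The gluing is then a homotopy pushout of spheres/points along an inclusion that is either null-homotopic or an equivalence, which yields a point or a sphere.
\item Concretely, I expect to prove that whenever $K$ is not a cone, there is a vertex $v$ such that either $K_{[m]\setminus v}$ is contractible, making $K\simeq \Sigma\,\link(v)$, or $\link(v)\hookrightarrow K_{[m]\setminus v}$ is an equivalence, making $K\simeq$ a point.
\end{itemize}
This needs $\link(v)$ to also be weakly taut, which I would derive from the Betti-number equality using the Mayer--Vietoris count from Step 2.

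\textbf{Main obstacle.} The hardest step is Step 3: extracting actual homotopy types, independent of $\mathbb{F}$, from a numerical equality over a single field. In particular, torsion must be excluded, e.g.\ a $\mathbb{Z}/p$ Moore space with trivial $\mathbb{F}$-homology. The inductive pushout description is designed precisely to avoid this, since it builds $K$ from points and spheres. Making the choice of $v$ and the equality-case analysis of Mayer--Vietoris work is where I expect most of the effort.
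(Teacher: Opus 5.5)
Your Step~1 and your derivation of (b) match the paper's proof of Theorem~\ref{Thm:Equiv}, which combines Theorem~\ref{Thm:Subcomplex-Taut} with Theorem~\ref{Thm:Homotopy-Type} and compares $\widetilde{tb}$ over different fields. The gap is that the real content, Theorem~\ref{Thm:Homotopy-Type}, is only sketched, and the counting you propose for it in Step~2 fails. Splitting $\widetilde D(K)$ into the sums over $J\not\ni v$ and $J\ni v$ and bounding the first by $\widetilde D(K\setminus v)\geq 2^{m-2-\mdim(K\setminus v)}$ needs an upper bound on $\mdim(K\setminus v)$. In general only $\mdim(K\setminus v)\geq \mdim(K)-1$ holds. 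For example, take the weakly taut $K$ with facets $\{1,2,3\},\{1,4\}$ and $v=4\in V_{\mdim}(K)$: the first sum is $\widetilde D(\Delta^{[3]})=1<2=2^{m-\mdim(K)-2}$, so the two halves need not attain any balanced minima.

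The count that works pairs $J$ with $J\cup\{v\}$. Mayer--Vietoris for $K|_{J\cup\{v\}}=\star_K v|_{J\cup\{v\}}\cup K|_J$ gives $\widetilde\beta_n(\link_K v|_J)\leq \widetilde\beta_{n+1}(K|_{J\cup\{v\}})+\widetilde\beta_n(K|_J)$. Summing yields $\widetilde D(K)\geq 2^{m-m_v-1}\widetilde D(\link_K v)$ (Theorem~\ref{Thm:Binom-Lower-Bound}(c)). For $v\in V_{\mdim}(K)$, equality forces $\link_K v$ to be weakly taut and, crucially, $\widetilde H_*(\link_K v|_J)\to \widetilde H_*((K\setminus v)|_J)$ to be surjective for every $J$ (Theorem~\ref{Thm:Weakly-Taut}).

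Your Step~3 dichotomy is the right one, but you give no mechanism for it, and it cannot come from homology of $K$ and $\link_K v$ alone. The paper extracts two combinatorial facts from the surjectivity.

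\emph{First}, it tests surjectivity on well-chosen $J$, where $(K\setminus v)|_J$ would have the homology of a sphere while $\link_K v|_J$ is empty or lies in a simplex. This yields $K\setminus v\cong K|_{V(\link_K v)}*\Delta^{[r_v]}$ (Theorem~\ref{Thm:K-v-Join}). So if $r_v>0$, then $K\setminus v$ is a cone and $\|K\|\simeq\mathrm{Susp}(\|\link_K v\|)$.

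\emph{Second}, suppose $r_v=0$ and $K\setminus v$ is a sphere. Surjectivity over $\mathbb F$ at $J=V(\link_K v)$ only says that $\link_K v\hookrightarrow K\setminus v$ has degree nonzero in $\mathbb F$, and that does not make it an equivalence. Think of the boundary circle of a M\"obius band (degree $2$, $\operatorname{char}\mathbb F\neq 2$), whose quotient is $\mathbb{R}P^2$; this is exactly the torsion problem you flag. The paper closes it by strengthening the induction to ``contractible or $\simeq S^{\mdim(K)}$''. Then surjectivity forces $\mdim(\link_K v)=\mdim(K\setminus v)$, and Lemma~\ref{Lem:mdim-link-v}(b) gives $\link_K v=K\setminus v$, so $K$ is a cone. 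That lemma works because equal $\widetilde D$ turns the surjections into isomorphisms on every full subcomplex; testing on a minimal face of $K\setminus v$ not in the link then gives a contradiction.

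Without these ingredients, your first bullet in Step~3 is also inaccurate as stated. The homotopy pushout of $pt\leftarrow L\to X$ along a null-homotopic map is $X\vee\mathrm{Susp}(L)$, which is not a sphere when $L$ and $X$ are both spheres. What is actually needed is that $K\setminus v$ is contractible, or that $\link_K v=K\setminus v$.
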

 
 The above theorem implies that the weakly tautness of a simplicial complex is also independent on the coefficient field $\mathbb{F}$ in the definition.
 Moreover, by the Hochster's formula for the bigraded Betti numbers (see Theorem~\ref{Thm:Hoch} and~\eqref{Equ:Hochster}), we obtain the following topological characterization of weakly taut simplicial complexes from Theorem~\ref{Thm:Equiv-0} immediately.

\begin{cor}\label{Cor:Character}
 Let $K$ be a simplicial complex with $m$ vertices. The following statements are equivalent:
 \begin{itemize}
 \item[(a)] $K$ is weakly taut. \n
 \item[(b)] Each full subcomplex of $K$ is homotopy equivalent either to a point or to a sphere, and the number of 
 full subcomplexes of $K$ that are homotopy equivalent to a sphere is $2^{m-\mdim(K)-1}$. \n 
 \item[(c)]  The reduced homology of any full subcomplex of $K$ is either trivial or has rank $1$, and the number of full subcomplexes of $K$ with rank $1$ reduced homology equals $2^{m-\mdim(K)-1}$. 
 \end{itemize}
\end{cor}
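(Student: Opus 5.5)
The plan is to derive all three equivalences from Hochster's formula together with Theorem~\ref{Thm:Equiv-0}. Hochster's formula (Theorem~\ref{Thm:Hoch}) gives
\[
\widetilde{D}(K;\mathbb{F}) \;=\; \sum_{J\subseteq [m]} \dim_{\mathbb{F}} \widetilde{H}^{*}(K_J;\mathbb{F}),
\]
where $K_J$ is the full subcomplex on $J$. By convention $K_{\varnothing}=\{\varnothing\}$, whose reduced homology is $\mathbb{F}$ in degree $-1$. So $\widetilde{D}(K;\mathbb{F})$ is the total rank of the reduced homology, summed over all full subcomplexes. I will prove (a)$\Rightarrow$(b)$\Rightarrow$(c)$\Rightarrow$(a).

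For (a)$\Rightarrow$(b), assume $K$ is weakly taut. By Theorem~\ref{Thm:Equiv-0}(a), each $K_J$ is homotopy equivalent to a point or to a sphere $S^{k}$ with $k\geq -1$, where $S^{-1}=\{\varnothing\}$ is the case $J=\varnothing$. Its reduced homology therefore has total rank $0$ or $1$, and it is $1$ exactly when $K_J$ is a sphere. Hochster's formula then says that the number of spherical full subcomplexes equals $\widetilde{D}(K;\mathbb{F})$, which is $2^{m-\mdim(K)-1}$ by weak tautness.

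For (b)$\Rightarrow$(c), use the same observation: a point has trivial reduced homology and a sphere has reduced homology of rank $1$ over any coefficients. The count in (b) therefore transfers directly to the count in (c).

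For (c)$\Rightarrow$(a), I read "rank $1$" over $\mathbb{F}$, or over $\mathbb{Z}$. In the integral case, rank-$1$ reduced homology gives total dimension $1$ over $\mathbb{F}$ only if there is no torsion. I would state (c) over the field $\mathbb{F}$, so that ranks are $\mathbb{F}$-dimensions and weak tautness is meant over $\mathbb{F}$. This is harmless, since Theorem~\ref{Thm:Equiv-0}(b) shows weak tautness does not depend on the field. With that reading, Hochster's formula gives $\widetilde{D}(K;\mathbb{F})=\#\{J : \widetilde{H}^{*}(K_J;\mathbb{F})\neq 0\}=2^{m-\mdim(K)-1}$, which is Definition~\ref{Def:Weakly-Taut-Complex}.

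The only real obstacle is bookkeeping. One must make the convention for $J=\varnothing$ consistent, so that it contributes the term $\beta^{0,0}=1$ and is counted as the sphere $S^{-1}$. One must also fix the meaning of "rank" in (c) so that its count agrees with the $\mathbb{F}$-dimension count. All the substantial work is contained in Theorem~\ref{Thm:Equiv-0}.
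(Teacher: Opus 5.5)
Your proposal is correct and follows the paper's route: the paper also derives this corollary directly from Hochster's formula in the form $\widetilde{D}(K;\mathbb{F})=\sum_{J}\widetilde{tb}(K|_J;\mathbb{F})$ together with Theorem~\ref{Thm:Equiv-0}, with the empty full subcomplex counted as the sphere $S^{-1}$. Your worry about reading ``rank'' in (c) integrally is also easily resolved: take $\mathbb{F}=\mathbb{Q}$, where the count shows $K$ is weakly taut over $\mathbb{Q}$, and hence over every field by Theorem~\ref{Thm:Equiv-0}(b).
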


In addition, we can characterize a weakly taut simplicial complex $K$ through its minimal non-faces and the Taylor resolution of its Stanley-Reisner ring $\mathbb{F}[K]$. Recall that
  a non-empty subset $J$ of $V(K)=\{v_1,\cdots, v_m\}$ is called a \emph{minimal non-face} of $K$ if $J$ is not a face of $K$ while $J \backslash \{i\}$ is a face of $K$ for every $i \in J$. Equivalently, $J$ is a minimal non-face of $K$ if the full subcomplex $K|_J\cong \partial \Delta^{J}$.
   One can construct a canonical free resolution $(R^{\bullet},d)$ of 
   $\mathbb{F}[K]$ over $\mathbb{F}[v_1,\cdots, v_m]$
   using the exterior algebra generated by all the minimal non-faces of $K$, called the \emph{Taylor resolution} of 
   $\mathbb{F}[K]$ (see~\cite{MillSturm04} or~\cite{Mer12}).
    We can compute $\mathrm{Tor}_{\mathbb{F}[v_1,\cdots,v_m]}(\mathbb{F}[K],\mathbb{F})$ by applying the functor $\otimes_{\mathbb{F}[v_1,\cdots,v_m]}\mathbb{F}$ to $(R^{\bullet},d)$ (see Section~\ref{Sec:Taylor}).  If the differential
   $d\otimes_{\mathbb{F}[v_1,\cdots,v_m]}\mathbb{F}$ is trivial, we call the Taylor resolution \emph{minimal}. 
  In fact, the minimality of the Taylor resolution does not depend 
   on the coefficient $\mathbb{F}$. So  
  we say that $K$ \emph{has a minimal Taylor resolution} if the Taylor resolution of $\mathbb{F}[K]$ is minimal for some $\mathbb{F}$. \n
   
    Note that if $K$ has a minimal Taylor resolution, then the Betti numbers 
   $$\beta^{-i}(\mathbb{F}[K]) = \binom{\mu(K)}{i}, \ \text{for every}\ 0\leq i \leq \mu(K),$$
   where $\mu(K)$ is the number of minimal non-faces of $K$. Comparing this property with that of weakly taut simplicial complexes given in~\eqref{Equ:Pattern-WK}, it is natural to ask whether a simplicial complex $K$ having a minimal Taylor resolution is equivalent to $K$ being weakly taut. We find that this is indeed the case.
    
 \begin{thm}[Theorem~\ref{Thm:Taylor-2}] \label{Thm:Taylor}
 Let $K$ be a simplicial complex with $m$ vertices. The following conditions are equivalent:
 \begin{itemize}
 \item[(1)] $K$ is weakly taut.\n
 \item[(2)] The number of minimal non-faces of $K$ is equal to $m-\operatorname{mdim} K-1$.\n
 \item[(3)] $K$ has a minimal Taylor resolution.
 \end{itemize} 
 \end{thm}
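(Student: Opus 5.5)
We prove the cycle (3)$\Rightarrow$(1)$\Rightarrow$(2)$\Rightarrow$(3). Write $\mu=\mu(K)$ for the number of minimal non-faces and $n=m-\mdim(K)-1$.

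\textbf{(3)$\Rightarrow$(1).} If the Taylor resolution is minimal, then $\widetilde{D}(K;\mathbb{F})=2^{\mu}$. Indeed, the Taylor complex has rank $\binom{\mu}{i}$ in homological degree $i$, and these ranks are the Betti numbers when the complex is minimal. By Theorem~\ref{Thm:Main-Binom} we have $2^{\mu}\geq 2^{n}$, so it suffices to show $\mu\leq n$. Take a maximal face $\sigma$ of dimension $\mdim(K)$. Every vertex $v\notin\sigma$ satisfies $\sigma\cup\{v\}\notin K$, so some minimal non-face $J_v$ lies in $\sigma\cup\{v\}$ and contains $v$. Distinct $v$ give distinct $J_v$, which yields $\mu\geq m-\mdim(K)-1=n$. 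For the reverse inequality we use minimality in homological degree $1$: $\beta^{-1}=\mu$ equals the number of minimal non-faces, and more importantly every Taylor basis element $e_S$, for $S$ a set of minimal non-faces, survives. In particular $\beta^{-\mu}\neq 0$, so $\mathrm{pdim}\,\mathbb{F}[K]=\mu$.

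Now use the bound $\mathrm{pdim}\leq n$. Indeed $\mathrm{depth}\,\mathbb{F}[K]\geq \mdim(K)+1$. One argument: by Hochster's formula a nonzero $\beta^{-i,2j}$ needs $\widetilde{H}^{j-i-1}(K_J)\neq 0$ with $|J|=j$. The alternative is to argue directly, which the paper's Mayer--Vietoris method supports: $\beta^{-i}=0$ for $i>n$. Take this as the standard fact that the projective dimension is at most $m-\mathrm{depth}$, with depth at least one more than the minimal facet dimension (the depth of a Stanley--Reisner ring is at least one plus the dimension of its pure skeleton bound; $K^{(\mdim)}$ is the pure skeleton of dimension $\mdim$, which is Cohen--Macaulay up to that dimension). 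Hence $\mu\leq n$, so $\mu=n$ and $\widetilde{D}=2^n$, which is weak tautness.

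\textbf{(1)$\Rightarrow$(2).} Weak tautness gives $\beta^{-1}=\binom{n}{1}=n$, and $\beta^{-1}$ is the number of minimal generators of $\mathcal{I}_K$, namely $\mu$. So $\mu=n$.

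\textbf{(2)$\Rightarrow$(3).} This is the main obstacle. Assume $\mu=n$. From the argument above, the minimal non-faces are exactly the $J_v$ for $v\notin\sigma$, and this holds for every minimal facet $\sigma$. The Taylor resolution is minimal if and only if, for every set $S$ of minimal non-faces and every $J\in S$, we have $\bigcup(S\setminus\{J\})\neq\bigcup S$. So it suffices to show that each $J_v$ contains a vertex lying in no other minimal non-face. The natural candidate is $v$ itself. Indeed, $v$ lies in no $J_w$ with $w\neq v$, because $J_w\subseteq\sigma\cup\{w\}$ and $v\notin\sigma\cup\{w\}$. Hence $v\in\bigcup S$ but $v\notin\bigcup(S\setminus\{J_v\})$, the Taylor differential tensored with $\mathbb{F}$ vanishes, and the resolution is minimal.

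The delicate point in this step is the claim that every minimal non-face equals some $J_v$. It follows from counting: there are $n$ distinct $J_v$ and exactly $n$ minimal non-faces. This reduction makes (2)$\Rightarrow$(3) clean.

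The real remaining difficulty is the projective-dimension bound $\mathrm{pdim}\leq n$ used in (3)$\Rightarrow$(1). This can be avoided by going directly (3)$\Rightarrow$(2). With a minimal Taylor resolution, for each pair $J\neq J'$ neither union is absorbed, so each minimal non-face has a private vertex. Choose a private vertex $p_J$ for each $J$. Then $V\setminus\{p_J\}_J$ contains no minimal non-face, so it is a face. Any facet containing it has dimension at least $m-\mu-1$, which gives a facet of dimension $\leq m-\mu-1$ after checking that adding any $p_J$ creates $J$. So $\mdim\leq m-\mu-1$, that is, $\mu\leq n$, and combined with $\mu\geq n$ this gives (2). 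Together with (2)$\Rightarrow$(3) and $\widetilde{D}=2^{\mu}=2^n$, this settles (1). I would use this route and drop the depth argument.
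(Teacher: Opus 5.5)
Your final route (after you drop the depth argument) is correct, and it is genuinely different from the paper's.

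\textbf{Comparison with the paper.} The paper gets (2)$\Rightarrow$(3) and (3)$\Rightarrow$(1) from the cited characterization Theorem~\ref{Thm:IriKishi} (Ayzenberg, Iriye--Kishimoto), combined with Lemmas~\ref{Lem:K-N-1} and~\ref{Lem:K-N-2}. It proves (1)$\Rightarrow$(2) by a long induction on $m$, built on the structure theory of Section~\ref{Sec-Weakly-Taut}: Corollaries~\ref{Cor:mdimv-char} and~\ref{Cor:two-vertex}, the decomposition~\eqref{Equ:Decomposition}, and the count $\mu(K)=\mu(K')+r_v$.

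You get (1)$\Rightarrow$(2) in one line. The equality pattern~\eqref{Equ:Pattern-WK}, forced by Theorem~\ref{Thm:Main-Binom}, gives $\beta^{-1}(\mathbb{F}[K])=m-\mdim(K)-1$. Independently, $\beta^{-1}(\mathbb{F}[K])=\mu(K)$, by Hochster's formula and Lemma~\ref{Lem:Simplex}(b), or as the number of minimal generators of $\mathcal{I}_K$.

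Your private-vertex arguments for (2)$\Rightarrow$(3) and (3)$\Rightarrow$(2) amount to a direct proof of both directions of Theorem~\ref{Thm:IriKishi}. The private vertices play the role of $\mathbf{a}$, and the sets $J\setminus\{p_J\}$ play the role of $\mathbf{N}$. Finally, (1) follows from (2) and (3) because $\widetilde{D}(K)=2^{\mu(K)}$.

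What your route buys: the theorem rests only on the Betti-number lower bound of Section~\ref{Sec:Lower-Bound} plus elementary combinatorics. It needs no external citation and none of the homotopy or structure results. The paper's induction adds only side information about how $\mu$ behaves under the vertex decomposition, which this theorem does not need.

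\textbf{Two points to fix in the write-up.}

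First, the claim you abandoned, $\mathrm{depth}\,\mathbb{F}[K]\geq\mdim(K)+1$, is false in general, not merely unneeded. For two disjoint edges, $\mdim=1$, but the complex is disconnected, so the depth is $1$. The valid general inequality goes the other way; it is equivalent to \eqref{Equ:Morvilla}.

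Second, in (3)$\Rightarrow$(2), applying minimality to pairs $\{J,J'\}$ is vacuous, since distinct minimal non-faces are never nested. It does not yield private vertices: three isolated points have the three edges as minimal non-faces, and none has a private vertex. You must apply the minimality criterion to $S$ equal to the set of all minimal non-faces, i.e.\ to the top Taylor generator. This gives $J\not\subseteq\bigcup_{J'\neq J}J'$ for every $J$. Then the $p_J$ are distinct and no $p_{J'}$ with $J'\neq J$ lies in $J$. So $J\setminus\{p_J\}\subseteq F$, and $F$ is a facet of dimension $m-\mu-1$, exactly as you say.
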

 
  It is shown in  Ayzenberg~\cite{Anton16} (also see
  Iriye and Kishimoto~\cite{IriKishi18}) that the minimality of Taylor resolution can be interpreted  combinatorially (see Theorem~\ref{Thm:IriKishi}). Then along with Theorem~\ref{Thm:Taylor}, we obtain a purely combinatorial description of weakly taut simplicial complexes.\n

   Moreover, we will prove that the set of all weakly taut simplicial complexes is closed under
some combinatorial operations such as join (see Lemma~\ref{Lem:Weak-Taut-Join}) and simplicial wedge (see Theorem~\ref{Thm:wedge-Weak-Taut}).
\n

 In addition, we find that (see Corollary~\ref{Cor:Weak-Taut-Range}) if $K\in \Sigma(m,d)$ is weakly taut, it is necessary that $\left[ \frac{m-1}{2}\right] \leq d \leq m-1$ (which is similar to taut simplicial complexes).
 So when $d< \left[ \frac{m-1}{2}\right]$,  
 $K\in \Sigma(m,d)$ is never weakly taut.
\n

 The paper is organized as follows. In Section~\ref{Sec:Prelim}, we introduce some 
 basic notations  for our discussion. In Section~\ref{Sec-D}, we prove some basic properties of
 $\widetilde{D}(K)$. 
 In Section~\ref{Sec:Lower-Bound}, we prove
 some results on the (bigraded) Betti numbers of the Stanley-Reisner ring of a simplicial complex and give a proof of
 Theorem~\ref{Thm:Main-Binom}.  In Section~\ref{Sec-Weakly-Taut}, we study the structures and various properties
 of weakly taut simplicial complexes. 
 In Section~\ref{Sec:Taylor}, we study the relation
 between weakly tautness of a simplicial complex $K$ and
  minimality of the Taylor resolution of $\mathbb{F}[K]$ and prove Theorem~\ref{Thm:Taylor}. 
  As a application, we classify all the taut simplicial complexes. In Section~\ref{Sec:Shifted}, we characterize 
  those weakly taut simplicial complexes that are shifted.
   In Section~\ref{Sec:Operations},
 we study some operations that preserve the weakly tautness of a simplicial complex.
 In the appendix, we list all the weakly taut simplicial complexes with no more than five vertices.
 
\vskip .4cm
 
 \section{Preliminary} \label{Sec:Prelim}
  For a finite CW-complex $X$ and a field $\F$, 
  let
\[ \beta_i(X;\F) := \dim_{\F} H_i(X;\F), \ \ \ \widetilde{\beta}_i(X;\F) := \dim_{\F} \widetilde{H}_i(X;\F)\]  
   where $H_i(X;\F)$ and $\widetilde{H}_i(X;\F)$ are the singular and reduced singular homology group of $X$ with $\F$-coefficients, respectively. Moreover, 
  define
   \[ tb(X;\F):= \sum_{i} \beta_i(X;\F), \ \ \  \widetilde{tb}(X;\F):= \sum_{i} \widetilde{\beta}_i(X;\F). \]  
    We call $tb(X;\F)$ and $\widetilde{tb}(X;\F)$ the  \emph{total Betti number} and \emph{reduced total Betti number} of $X$ with $\F$-coefficients, respectively. 
    The difference between $tb(X;\F)$ and $\widetilde{tb}(X;\F)$ is just $1$. 
  \n
    
    Let $K$ be a simplicial complex whose vertex set
  is $V(K)=[m]=\{v_1,\cdots, v_m\}$. For brevity, we will simply say that $K$ is a simplicial complex on $[m]$ in the rest of the paper.
   Each simplex
    $\sigma$ of $K$ is considered as a subset of $[m]$ whose dimension
   $$\dim(\sigma)=|\sigma|-1,$$ 
   where $|\sigma|$ is the cardinality of $\sigma$. We also use \emph{face} to refer to a simplex of $K$.
   \n
   
 The set of simplices of $K$ are ordered by inclusions. So every simplex of $K$ is included in some maximal simplex (may not be unique). For convenience, we introduce the following notations for our discussion:
   \begin{align*}
   \|K\| &= \text{the geometric realization of}\ K;\\
      \Delta^{max}(K) &= \text{the set of all maximal simplices of $K$};\\
      \mathrm{mdim}(K)&=\text{the minimal dimension of the maximal simplices of $K$};\\
      \Delta^{max}_{\mdim}(K) &= \{\xi\in \Delta^{max}(K)\mid  \dim(\xi) = \mathrm{mdim}(K)\} \subseteq  \Delta^{max}(K) ;  \\
      V_{\mdim}(K) &= \{ v\in V(K)\mid v\ \text{is a vertex of some simplex in}\ \Delta^{max}_{\mdim}(K)\}.
  \end{align*}
    \n

    Note that the difference between $\dim(K)$ and $\mdim(K)$ could be very large.
  If $\mdim(K)=\dim(K)$, then all the maximal simplices of $K$ have the same dimension, which means that $K$ is a \emph{pure} simplicial complex.
   \n
   
   For any simplex $\sigma$ of $K$, the \emph{link} and the \emph{star} of $\sigma$ are the subcomplexes
$$
\begin{aligned}
& \link_K \sigma=\{\tau \in K \mid \sigma \cup \tau \in K, \sigma \cap \tau=\varnothing\} ; \\
& \star_K \sigma=\{\tau \in K \mid \sigma \cup \tau \in K\} .
\end{aligned}
$$
   
   For any subset $J\subseteq  [m]$, let 
    \[  K|_J := \ \text{the \emph{full subcomplex} of $K$ obtained by restricting to}\ J.\]

   When $J=\varnothing$, $K|_J$ is the irrelevant complex $\{\varnothing\}$. Define\n
  \begin{itemize}
     \item $\beta_{i}(\{\varnothing\};\mathbb{F})=0, \ \text{for all}\ i\geqslant 0;\ \ \
    \widetilde{\beta}_{i}(\{\varnothing\};\mathbb{F})=  \begin{cases}
   1 ,  &  \text{if $i= -1$ }; \\
   0,  &  \text{otherwise}.
 \end{cases} $
 \end{itemize}
   \n
  
    For a vertex $v$ of $K$, we also use the following notation for brevity:
   $$K\setminus v := K|_{V(K)\setminus \{v\}}, \ \ \
    m_v := |V(\link_K v)|.$$
  
 \vskip .4cm
 
 \section{Properties of $\widetilde{D}(K)$} \label{Sec-D}
     
The following formula due to Hochster~\cite{Hoc77} (also see~\cite[Theorem 4.8]{Stanley07}) tells us that the bigraded Betti numbers of a simplicial complex $K$ can be computed from the homology groups of the full subcomplexes of $K$.

\begin{thm}[Hochster's formula] \label{Thm:Hoch}
 Let $K$ be a simplicial complex on $[m]$. 
 Then the bigraded Betti numbers of $\mathbb{F}[K]$ can be computed as 
 \begin{equation}\label{Equ:Hochster}
   \beta^{-i,2j}(\mathbb{F}[K]) = \sum_{J\subseteq [m], |J|=j} 
   \dim_{\mathbb{F}} \widetilde{H}_{j-i-1}(K|_J;\mathbb{F}) =\sum_{J\subseteq [m], |J|=j} \widetilde{\beta}_{j-i-1}(K|_J;\mathbb{F}).
 \end{equation} 
\end{thm}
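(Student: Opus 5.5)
The plan is the standard Koszul-complex argument, refined by the fine $\mathbb{Z}^m$-grading. Write $S=\mathbb{F}[v_1,\cdots,v_m]$. Since $\mathrm{Tor}$ is balanced, I will compute $\mathrm{Tor}_S(\mathbb{F}[K],\mathbb{F})$ by resolving the second argument $\mathbb{F}$ rather than $\mathbb{F}[K]$. The Koszul complex $\Lambda[u_1,\cdots,u_m]\otimes S$, with $d(u_\ell)=v_\ell$, is a free resolution of $\mathbb{F}$. Tensoring with $\mathbb{F}[K]$ shows that $\mathrm{Tor}_S(\mathbb{F}[K],\mathbb{F})$ is the cohomology of
$$\bigl(\Lambda[u_1,\cdots,u_m]\otimes \mathbb{F}[K],\ d\bigr).$$
I assign $\deg u_\ell=(-1,2)$ and $\deg v_\ell=(0,2)$, so that $d$ has bidegree $(1,0)$ and the bigrading matches the one on $\mathrm{Tor}^{-i,2j}$.

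\textbf{Fine grading and reduction to squarefree degrees.} I refine the grading by $\mathbb{Z}^m$, giving $u_\ell$ and $v_\ell$ both multidegree $e_\ell$; the differential preserves this multidegree. I then need that only squarefree multidegrees $a=\sum_{\ell\in J}e_\ell$ contribute to $\mathrm{Tor}$. The cleanest route is to note that $\mathrm{Tor}$ can equally be computed from any multigraded free resolution of $\mathbb{F}[K]$, for instance the Taylor resolution. Its generators sit in multidegrees that are lcm's of squarefree monomials, hence squarefree. So $\mathrm{Tor}$ vanishes in all non-squarefree multidegrees. (Alternatively, for a multidegree with some $a_\ell\geq 2$ one can write down an explicit contracting homotopy pairing $u_\ell$ with $v_\ell$.) I expect this reduction to be the only step needing some care, and I would use the Taylor-resolution argument.

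\textbf{Identifying the squarefree strand with $K|_J$.} Fix $J\subseteq[m]$ with $|J|=j$. A basis of the multidegree-$J$ part consists of the elements $u_T\, v^{\sigma}$ with $T\sqcup\sigma=J$, where $v^\sigma=\prod_{\ell\in\sigma}v_\ell$ must be nonzero in $\mathbb{F}[K]$. This means exactly that $\sigma$ is a face of $K|_J$, including $\sigma=\varnothing$. The differential is
$$d(u_T v^\sigma)=\sum_{\ell\in T}\pm\, u_{T\setminus \ell}\, v^{\sigma\cup \ell}.$$
A term is nonzero only when $\sigma\cup\ell\in K|_J$. With the sign convention fixed by the ordering of $J$, this is precisely the simplicial coboundary $\sigma\mapsto\sum\pm(\sigma\cup\ell)$.

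So the strand is isomorphic to the augmented simplicial cochain complex of $K|_J$, where the empty face accounts for the reduced part. The element $u_Tv^\sigma$ has bidegree $(-|T|,2j)$, and $|T|=i$ corresponds to $\dim\sigma=j-i-1$. Therefore
$$\mathrm{Tor}^{-i,2j}_J\cong \widetilde{H}^{\,j-i-1}(K|_J;\mathbb{F}).$$
Over a field this has the same dimension as $\widetilde{H}_{j-i-1}(K|_J;\mathbb{F})$. This is consistent with the convention $\widetilde{\beta}_{-1}(\{\varnothing\})=1$, which covers $J=\varnothing$ and the case $i=j$.

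\textbf{Summing.} Summing over all $J$ with $|J|=j$ gives \eqref{Equ:Hochster}.
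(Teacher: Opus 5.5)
The paper does not prove this statement. It quotes it from Hochster~\cite{Hoc77} (see also \cite[Theorem 4.8]{Stanley07}) and uses it as a black box, so there is no proof in the paper to compare with. Your argument is a correct, self-contained proof along the standard Koszul-complex lines, essentially the one in Buchstaber--Panov~\cite{BP15}.

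The only real difference from that treatment is how you dispose of the non-squarefree multidegrees. Buchstaber and Panov pass to the quotient of $\Lambda[u_1,\cdots,u_m]\otimes\mathbb{F}[K]$ by $v_\ell^2=u_\ell v_\ell=0$ and show the projection is a quasi-isomorphism. You instead compare with the Taylor resolution, which the paper itself uses in Section~\ref{Sec:Taylor}. That works, but you should state two things explicitly. First, the balancing isomorphism is $\mathbb{Z}^m$-graded, since it comes from the double complex of two multigraded resolutions. Second, exactness of the Taylor resolution is a general fact about monomial ideals, independent of Hochster's formula, so nothing is circular.

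Your fallback also works. For $a_\ell\geq 2$, the multidegree-$a$ strand is the mapping cone of multiplication by $v_\ell$ on the Koszul complex in the remaining variables. In that range this map sends basis monomials bijectively to monomials with the same support, so it is an isomorphism and the cone is acyclic.

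The sign check you left implicit also goes through. The assignment $u_{J\setminus\sigma}v^{\sigma}\mapsto(-1)^{\#\{(t,s)\in(J\setminus\sigma)\times\sigma\,:\,t>s\}}\,\sigma^{*}$ intertwines $d$ with the simplicial coboundary up to the factor $(-1)^{|J\setminus\sigma|-1}$. This factor depends only on the degree, so it does not affect cohomology.
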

  By this formula, we can immediately see the following facts:\n
  \begin{itemize}
   \item $\beta^{-i,2j}(\mathbb{F}[K*\Delta^{[r]}])=\beta^{-i,2j}(\mathbb{F}[K])$ for any $r\geq 1$.\n
  \item $\beta^{-i,2j}(\mathbb{F}[K])=0$ if $j\leq i$ or if $j>m$ or $i>m$.\n
  \item $\beta^{0,2j}(\mathbb{F}[K])=0$ for any $j> 0$, and $\beta^{0,0}(\mathbb{F}[K])=1$.
  \end{itemize}
  So the calculation of $ \beta^{-i,2j}(\mathbb{F}[K]) $ is nontrivial only when $1\leq i< j \leq m$.
  \n
 By the Hochster's formula, we can express $\widetilde{D}(K;\mathbb{F})$ as
    \begin{equation} \label{Equ:D-tilde-K-J}
    \widetilde{D}(K;\mathbb{F})= \sum_{J\subseteq [m]} \widetilde{tb}(K|_J;\mathbb{F}). 
    \end{equation}
    This explains why we put ``$\, \widetilde{\ }\, $'' in the notation
    $\widetilde{D}(K;\mathbb{F})$.\n
 
 There is another interpretation of
  $\widetilde{D}(K;\mathbb{F})$ from a canonical CW-complex $\ZZ_K$ associated to $K$ called the \emph{moment-angle complex} of $K$ (see Davis and Januszkiewicz~\cite[pp.\,428--429]{DaJan91} or Buchstaber and Panov~\cite[Section 4.1]{BP15}). One way to write $\ZZ_K$ is 
  \begin{equation} \label{Equ:ZK} 
 \ZZ_K =  \bigcup_{\sigma\in K} \Big( \prod_{v_i\in\sigma} D^2_{(i)} \times \prod_{v_i\notin \sigma} S^1_{(i)} \Big) \subseteq \prod_{v_i\in [m]} D^2_{(i)},
  \end{equation}
  where $D^2_{(i)}$ and $S^1_{(i)}$ are copies indexed by $v_i\in [m]$ of respectively the unit disk
  $D^2=\{z\in \mathbb{C} \mid |z|\leq 1\}$ and the circle $S^1=\partial D^2=\{z\in \mathbb{C} \mid |z|= 1\}$, and $\prod$ denotes Cartesian product of spaces.
  It is shown in~\cite[Section 4]{BP15} that
 $\mathrm{Tor}_{\mathbb{F}[v_1,\cdots, v_m]}(\mathbb{F}[K],\mathbb{F})$ computes the cohomology ring of $\mathcal{Z}_K$, which implies 
  \begin{equation} \label{Equ:DK-ZK}
    \widetilde{D}(K;\mathbb{F}) =\dim_{\mathbb{F}} \mathrm{Tor}_{\mathbb{F}[v_1,\cdots, v_m]}(\mathbb{F}[K],\mathbb{F}) =tb(\mathcal{Z}_K;\mathbb{F}). 
   \end{equation} 
  \n

 \noindent \textbf{Convention:} In the rest of the paper, the coefficients $\mathbb{F}$
will be omitted when there is no ambiguity in the context or the coefficients are not essential for the argument. So we also use
$\widetilde{D}(K)$ and $\widetilde{tb}(K)$ below to denote $\widetilde{D}(K;\mathbb{F})$ and $\widetilde{tb}(K;\mathbb{F})$, respectively.\n

  Next, we prove some easy lemmas on the properties of $\widetilde{D}(K)$.
    
 \begin{lem} \label{Lem-retr}
	If $L$ is a full subcomplex of $K$, then
	$\widetilde{D}(L)\leq \widetilde{D}(K)$.
	\end{lem}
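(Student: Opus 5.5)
The plan is to use the expression~\eqref{Equ:D-tilde-K-J} coming from Hochster's formula (Theorem~\ref{Thm:Hoch}). Let $L = K|_I$ for some $I\subseteq [m]$. The key observation is that full subcomplexes of a full subcomplex are again full subcomplexes of the ambient complex. Precisely, for every $J\subseteq I$ we have
\[
L|_J = (K|_I)|_J = K|_J .
\]
Indeed, a subset $\sigma\subseteq J$ is a face of $(K|_I)|_J$ if and only if $\sigma\in K$, because $\sigma\subseteq J\subseteq I$. This holds also for $J=\varnothing$, where both sides are the irrelevant complex $\{\varnothing\}$, whose reduced total Betti number is $1$ by the convention of Section~\ref{Sec:Prelim}.

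Applying~\eqref{Equ:D-tilde-K-J} to $L$, whose vertex set is $I$, gives
\[
\widetilde{D}(L) = \sum_{J\subseteq I} \widetilde{tb}(L|_J) = \sum_{J\subseteq I} \widetilde{tb}(K|_J).
\]
The right-hand side is a sub-sum of
\[
\widetilde{D}(K) = \sum_{J\subseteq [m]} \widetilde{tb}(K|_J).
\]
The omitted terms, those indexed by $J\not\subseteq I$, are reduced total Betti numbers and hence nonnegative. Therefore $\widetilde{D}(L)\leq \widetilde{D}(K)$.

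There is no real obstacle here. The only point needing care is the bookkeeping: when Hochster's formula is applied to $L$, it must be applied with respect to the vertex set $I$ of $L$, not $[m]$. One should also check that the empty set and the conventions for $\{\varnothing\}$ agree on both sides, and they do.

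An alternative route gives a topological proof. By~\eqref{Equ:DK-ZK}, $\widetilde{D}(K)=tb(\ZZ_K)$. The complex $\ZZ_L$ is a retract of $\ZZ_K$ when $L$ is full, via the coordinate projection sending the factors indexed by $v_i\notin I$ to $1\in S^1$. Since a retract induces an injection in homology, this again yields $tb(\ZZ_L)\leq tb(\ZZ_K)$. The combinatorial argument above is simpler, so it is the one I would write down.
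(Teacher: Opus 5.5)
Your proof is correct and is the same as the paper's: both apply the Hochster-type formula $\widetilde{D}(K)=\sum_{J\subseteq[m]}\widetilde{tb}(K|_J)$ together with the fact that every full subcomplex of $L=K|_I$ is a full subcomplex $K|_J$ of $K$. The sum for $L$ is therefore a sub-sum of nonnegative terms of the sum for $K$. Your remark about the retraction $\mathcal{Z}_L\to\mathcal{Z}_K\to\mathcal{Z}_L$ is a valid alternative, but it is not needed.
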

\begin{proof}
 This follows from the formula~\eqref{Equ:D-tilde-K-J} of $\widetilde{D}(K)$ and the simple fact that any full subcomplex of $L$ is also a full subcomplex of $K$. 
 \end{proof}

We warn that the statement of Lemma~\ref{Lem-retr} is not true if $L$
 is only a subcomplex  but not a full subcomplex of $K$.\n

 The following lemma characterizes a simplex
 and the boundary of a simplex from the perspective of homology groups.

\begin{lem} \label{Lem:Simplex}
  Let $K$ be a simplicial complex on $[m]$. 
  \begin{itemize}
  \item[(a)] $K\cong \Delta^{[m]}$ if and only if $\widetilde{D}(K)=1$. \n
  \item[(b)]  $K\cong \partial \Delta^{[m]}$ if and only if $\widetilde{H}_{m-2}(K)\neq 0$.
  \end{itemize}
 \end{lem}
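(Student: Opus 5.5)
For both parts I will work from formula~\eqref{Equ:D-tilde-K-J}, $\widetilde{D}(K)=\sum_{J\subseteq[m]}\widetilde{tb}(K|_J)$, together with elementary facts about simplicial homology. The one input from outside the paper is a standard acyclicity fact: a cone is contractible, so its reduced homology vanishes.

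\emph{Part (a).} For the forward direction, let $K\cong\Delta^{[m]}$. Every full subcomplex $K|_J$ with $J\neq\varnothing$ is a simplex, hence contractible, so $\widetilde{tb}(K|_J)=0$. The only surviving term is $J=\varnothing$, where $\widetilde{tb}(\{\varnothing\})=1$ by our convention $\widetilde{\beta}_{-1}(\{\varnothing\})=1$. Thus $\widetilde{D}(K)=1$.

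For the converse, suppose $K\not\cong\Delta^{[m]}$. Since $K$ has vertex set $[m]$, it has a minimal non-face $J$ with $|J|\geq 2$, and then $K|_J\cong\partial\Delta^{J}$. This is a sphere of dimension $|J|-2\geq 0$, so $\widetilde{tb}(K|_J)=1$. Adding this term to the contribution $1$ from $J=\varnothing$ gives $\widetilde{D}(K)\geq 2$. Alternatively, one could invoke Lemma~\ref{Lem-retr} with $L=K|_J$.

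\emph{Part (b).} If $K\cong\partial\Delta^{[m]}$, then $K$ is a simplicial $(m-2)$-sphere, so $\widetilde{H}_{m-2}(K)\cong\F$ is nonzero. For the converse, assume $\widetilde{H}_{m-2}(K)\neq 0$. Because $K$ has $m$ vertices, its dimension is at most $m-1$, and $K\neq\Delta^{[m]}$ since the simplex is acyclic. So $\dim K\leq m-2$, and the only possible $(m-2)$-simplices are the $m$ facets $F_i=[m]\setminus\{v_i\}$.

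Moreover, $\widetilde{H}_{m-2}(K)$ equals the space of $(m-2)$-cycles, because there are no $(m-1)$-chains. I would take a nonzero cycle $z=\sum_i c_iF_i$ and write out its boundary. A given $(m-3)$-face $[m]\setminus\{v_i,v_j\}$ lies only in $F_i$ and $F_j$, so its coefficient in $\partial z$ is $\pm c_i\pm c_j$, with nonzero signs. Hence $c_i\neq 0$ forces $c_j\neq 0$ for every $j\neq i$, and so all the $c_j$ are nonzero. In particular every $F_j$ belongs to $K$. Therefore $K$ contains $\partial\Delta^{[m]}$ and is properly contained in $\Delta^{[m]}$, which gives $K=\partial\Delta^{[m]}$.

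The edge case $m=1$ needs a separate look. There $\widetilde{H}_{-1}(K)\neq 0$ means $K=\{\varnothing\}=\partial\Delta^{[1]}$, consistent with the paper's conventions.

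The only step with real content is the boundary-coefficient argument in (b), which shows that a top cycle must use every facet. It is routine, but the sign bookkeeping in the differential has to be stated carefully.
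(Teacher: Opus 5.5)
Your proposal is correct and follows the paper's proof. In (a) a minimal non-face $J$ contributes $\widetilde{tb}(K|_J)=1$ on top of the $J=\varnothing$ term in the Hochster sum. In (b) acyclicity of the simplex forces $K\subseteq\partial\Delta^{[m]}$, and then a nonzero top-dimensional cycle must involve every facet; you simply write out this boundary-coefficient computation and the easy directions, which the paper takes for granted.
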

 \begin{proof}
 (a) We use the formula of $\widetilde{D}(K)$ in ~\eqref{Equ:D-tilde-K-J}.
 Consider a minimal non-face $J$ of $K$. If $J \neq \varnothing$, then $K|_J$ is a simplicial sphere of dimension $|J|-2$ and so
  $\widetilde{tb}(K|_J) = 1$. This implies 
  $$\widetilde{D}(K)\geq  \widetilde{tb}(K|_{\varnothing}) + \widetilde{tb}(K|_J) =  2.$$
  So $\widetilde{D}(K)=1$ if and only if all the minimal non-faces of $K$ are empty, which is equivalent to
   $K\cong \Delta^{[m]}$. \n
  
  (b) If $\widetilde{H}_{m-2}(K)\neq 0$, then $K$ is not
  $\Delta^{[m]}$. So $K$ is a subcomplex of $\partial\Delta^{[m]}$. Note that any $(m-2)$-dimensional cycle
  of $\partial\Delta^{[m]}$
  is a multiple of the sum of all the oriented $(m-2)$-simplices of
  $\partial\Delta^{[m]}$. So $K$ must be the whole $\partial\Delta^{[m]}$.
 \end{proof}

\begin{lem} \label{Lem:Join-X}
  For any finite CW-complexes $X$ and $Y$,  
    $$  tb(X\times Y) = tb(X)tb(Y), \ \ \ \widetilde{tb}(X*Y) = \widetilde{tb}(X) \widetilde{tb}(Y)$$
    where $X*Y$ is the join of $X$ and $Y$.
  \end{lem}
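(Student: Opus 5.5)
For the product formula I would argue via the Künneth theorem over the field $\mathbb{F}$. Since $\mathbb{F}$ is a field, every $H_*(X;\mathbb{F})$ is a vector space and there are no Tor terms. So $H_n(X\times Y;\mathbb{F})\cong\bigoplus_{p+q=n}H_p(X;\mathbb{F})\otimes_{\mathbb{F}} H_q(Y;\mathbb{F})$. Taking dimensions and summing over $n$ gives $tb(X\times Y)=\sum_{p,q}\beta_p(X)\beta_q(Y)=tb(X)tb(Y)$. Finite CW-complexes have finite-dimensional total homology, so all sums are finite.

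For the join I would use the reduced Künneth formula for joins,
\[
\widetilde{H}_{n+1}(X*Y;\mathbb{F})\cong\bigoplus_{p+q=n}\widetilde{H}_p(X;\mathbb{F})\otimes_{\mathbb{F}}\widetilde{H}_q(Y;\mathbb{F}).
\]
I would derive it from the Mayer--Vietoris sequence. Write $X*Y=A\cup B$ with $A\simeq X$ (the half near $X$), $B\simeq Y$, and $A\cap B\simeq X\times Y$. For non-empty $X,Y$ the map $\widetilde{H}_*(X\times Y)\to\widetilde{H}_*(X)\oplus\widetilde{H}_*(Y)$ induced by the two projections is surjective, because each projection has a section. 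Hence the connecting homomorphism identifies $\widetilde{H}_{n+1}(X*Y)$ with the kernel of that map. Comparing with the Künneth decomposition of $H_*(X\times Y)$, this kernel is exactly the mixed part $\bigoplus_{p+q=n}\widetilde{H}_p(X)\otimes\widetilde{H}_q(Y)$.

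Alternatively, one can use $\Sigma(X\wedge Y)\simeq X*Y$ for CW-complexes together with $\widetilde{H}_*(X\wedge Y)\cong\widetilde{H}_*(X\times Y,X\vee Y)$. In either route, summing dimensions gives $\widetilde{tb}(X*Y)=\widetilde{tb}(X)\widetilde{tb}(Y)$.

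The main point requiring care is the degenerate case where the paper's conventions allow $X$ or $Y$ to be the empty complex $\{\varnothing\}$, whose reduced homology is $\mathbb{F}$ in degree $-1$. Then $X*\{\varnothing\}=X$, and the formula holds because $\widetilde{tb}(\{\varnothing\})=1$. This is consistent with the degree shift: a class in degree $-1$ tensored with a class in degree $q$ lands in degree $q+1-1=q$. I would check this case separately and apply the Mayer--Vietoris argument only when both spaces are non-empty.
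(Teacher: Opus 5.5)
Your proposal is correct. The product part is the same as the paper's, namely Künneth over a field. For the join, the paper uses what you list as your alternative. It invokes $X*Y\simeq\Sigma(X\wedge Y)$ and the relative Künneth formula for $(X\times Y,X\vee Y)$ to get $\widetilde H_n(X*Y)\cong\bigoplus_i\widetilde H_i(X)\otimes\widetilde H_{n-1-i}(Y)$. It then notes separately that $X*Y$ is path-connected, so $\widetilde H_0(X*Y)=0$.

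Your main route, Mayer--Vietoris on the two halves of the join, is genuinely different. It needs only the absolute Künneth formula and elementary homotopy equivalences, and the vanishing of $\widetilde H_0(X*Y)$ falls out of the tail of the reduced sequence. The cost is that you must check surjectivity of $\widetilde H_n(X\times Y)\to\widetilde H_n(X)\oplus\widetilde H_n(Y)$ onto the direct sum, not just onto each factor. ``Each projection has a section'' is not quite enough by itself. You also need that $p_X\circ s_Y$ and $p_Y\circ s_X$ are constant maps, hence zero on reduced homology, so that $s_{X*}(a)+s_{Y*}(b)$ is a preimage of $(a,b)$. With that in place, split $H_p=\widetilde H_p\oplus\delta_{p0}\mathbb{F}$ in the Künneth formula. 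This gives $\dim\widetilde H_n(X\times Y)-\widetilde\beta_n(X)-\widetilde\beta_n(Y)=\sum_{p+q=n}\widetilde\beta_p(X)\widetilde\beta_q(Y)$, which is all you need for $\widetilde{tb}$.

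Your separate treatment of the empty complex is a small but real improvement. The paper's proof silently assumes $X,Y\neq\varnothing$, since its claim that $X*Y$ is always path-connected fails otherwise. The convention $\widetilde{tb}(\{\varnothing\})=1$ is exactly what makes the formula hold in that case.
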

  \begin{proof}
   The equality $tb(X\times Y) = tb(X)tb(Y)$ follows from the K\"unneth formula of homology groups.
    In addition, by the homotopy equivalence 
    $$X*Y\simeq \Sigma(X\wedge Y)$$
     where ``$\wedge$'' is the smash product and ``$\Sigma$'' is the reduced suspension, we obtain (with a field coefficient) that
    \begin{equation} \label{Equ:Hom-Iso-join}
      \widetilde{H}_n(X*Y) \cong H_{n-1}(X\wedge Y) \cong
    \bigoplus_{i} (\widetilde{H}_i(X) \otimes \widetilde{H}_{n-1-i}(Y) ). 
    \end{equation}
    The second isomorphism in~\eqref{Equ:Hom-Iso-join}
    follows from the relative version of the K\"unneth formula (see~\cite[Corollary 3.B.7]{Hat02}).
   Notice that $X*Y$ is always path-connected and hence $\widetilde{H}_0(X*Y)=0$. Then it follows that $\widetilde{tb}(X*Y) = \widetilde{tb}(X) \widetilde{tb}(Y)$.
  \end{proof}

 \begin{lem} \label{Lem:Join-K}
  For any finite nonempty simplicial complexes $K$ and $L$,  
  $$ \widetilde{D}(K*L) = \widetilde{D}(K) \widetilde{D}(L) .  $$
  \end{lem}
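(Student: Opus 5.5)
The plan is to combine formula~\eqref{Equ:D-tilde-K-J} with Lemma~\ref{Lem:Join-X}. Let $K$ be on the vertex set $V(K)$ and $L$ on $V(L)$, with $V(K)\cap V(L)=\varnothing$. Then $K*L$ is a simplicial complex on $V(K)\sqcup V(L)$. Every subset $J\subseteq V(K)\sqcup V(L)$ can be written uniquely as $J=J_1\sqcup J_2$ with $J_1\subseteq V(K)$ and $J_2\subseteq V(L)$.

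The first step is to check the combinatorial identity
$$(K*L)|_{J_1\sqcup J_2} = K|_{J_1} * L|_{J_2}.$$
A face of $K*L$ has the form $\sigma\sqcup\tau$ with $\sigma\in K$ and $\tau\in L$. It lies in $J$ exactly when $\sigma\subseteq J_1$ and $\tau\subseteq J_2$, and this is the definition of a face of $K|_{J_1}*L|_{J_2}$. This also covers the degenerate cases. If $J_1=\varnothing$, then $K|_{J_1}=\{\varnothing\}$ and $\{\varnothing\}*L|_{J_2}=L|_{J_2}$.

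The second step is to apply Lemma~\ref{Lem:Join-X}, which gives
$$\widetilde{tb}(K|_{J_1}*L|_{J_2})=\widetilde{tb}(K|_{J_1})\,\widetilde{tb}(L|_{J_2}).$$
The main point to handle, and the only real obstacle, is that Lemma~\ref{Lem:Join-X} is stated for CW-complexes, while $K|_{J_1}$ or $L|_{J_2}$ may be the irrelevant complex, whose realization is empty. These cases I will check directly from the convention in Section~\ref{Sec:Prelim} that $\widetilde{tb}(\{\varnothing\})=1$.

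\begin{itemize}
\item If $J_1=\varnothing$, the join is $L|_{J_2}$, so both sides equal $\widetilde{tb}(L|_{J_2})$. The case $J_2=\varnothing$ is symmetric.
\item If both $J_1$ and $J_2$ are empty, both sides equal $1$.
\end{itemize}

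When both $J_1$ and $J_2$ are nonempty, the realizations are nonempty finite CW-complexes and $\|K|_{J_1}*L|_{J_2}\|\cong\|K|_{J_1}\|*\|L|_{J_2}\|$. Lemma~\ref{Lem:Join-X} then applies directly; the usual reduced-homology convention for $\widetilde{H}_0$ is compatible with the Künneth argument used there.

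Finally, summing over all $J$ and using~\eqref{Equ:D-tilde-K-J}, the sum factors:
$$\widetilde{D}(K*L)=\sum_{J_1\subseteq V(K)}\sum_{J_2\subseteq V(L)}\widetilde{tb}(K|_{J_1})\,\widetilde{tb}(L|_{J_2})=\widetilde{D}(K)\,\widetilde{D}(L).$$
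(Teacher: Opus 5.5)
Your proof is correct, but it takes a different route from the paper's. The paper works through moment-angle complexes. It quotes $\mathcal{Z}_{K*L}\cong\mathcal{Z}_K\times\mathcal{Z}_L$ and $\widetilde{D}(K)=tb(\mathcal{Z}_K)$ (see~\eqref{Equ:DK-ZK}) from Buchstaber--Panov. It then needs only the product half of Lemma~\ref{Lem:Join-X}, namely the K\"unneth formula $tb(X\times Y)=tb(X)\,tb(Y)$.

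You instead stay inside Hochster's formula~\eqref{Equ:D-tilde-K-J}. The identity $(K*L)|_{J_1\sqcup J_2}=K|_{J_1}*L|_{J_2}$ makes the sum over subsets of $V(K)\sqcup V(L)$ factor. The join half of Lemma~\ref{Lem:Join-X}, $\widetilde{tb}(X*Y)=\widetilde{tb}(X)\,\widetilde{tb}(Y)$, then finishes the argument. The paper does not otherwise use that half in this proof.

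The paper's version is a one-liner, but it rests on two external facts about $\mathcal{Z}_K$ and never has to deal with the irrelevant complex. Yours is self-contained once Hochster's formula is granted. The cost is the separate check for $J_1=\varnothing$ or $J_2=\varnothing$. You handle it correctly via the convention $\widetilde{tb}(\{\varnothing\})=1$, which matches the $J=\varnothing$ term $\beta^{0,0}=1$ in~\eqref{Equ:D-tilde-K-J}.

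Your route also gives something extra. If you track degrees in the K\"unneth isomorphism~\eqref{Equ:Hom-Iso-join}, it yields the finer bigraded identity $\beta^{-i,2j}(\mathbb{F}[K*L])=\sum_{i_1+i_2=i,\ j_1+j_2=j}\beta^{-i_1,2j_1}(\mathbb{F}[K])\,\beta^{-i_2,2j_2}(\mathbb{F}[L])$, not just equality of the totals.
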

 \begin{proof}
   Since $\mathcal{Z}_{K*L} \cong\mathcal{Z}_K\times \mathcal{Z}_L$ (see~\cite{BP15}), we obtain from~\eqref{Equ:DK-ZK} and
   Lemma~\ref{Lem:Join-X} that
 \n
 \qquad  $ \widetilde{D}(K*L) = tb(\mathcal{Z}_{K*L}) =tb(\mathcal{Z}_K\times \mathcal{Z}_L)  
   =tb(\mathcal{Z}_K) tb(\mathcal{Z}_L) = \widetilde{D}(K) \widetilde{D}(L)$.
 \end{proof}
 
 \vskip .4cm
 
 \section{Lower bounds of Betti numbers of Stanley-Reisner Ring} \label{Sec:Lower-Bound}
 
 The proof of the lower bound~\eqref{Equ:Binom-1} of $\beta^{-i}(\mathbb{F}[K])$ in~\cite{Uto12} is based on
 the following result of Evans and Griffith~\cite{EvanGriff88}.
 
 \begin{thm}[{see~\cite[Corollary 2.5]{EvanGriff88}}] \label{Thm:EvaGriff}
 Let $M$ be a module over the polynomial ring $R=\mathbb{F}[v_1,\cdots,v_m]$ of the form $M= R \slash I $ where $I$ is a monomial ideal. Then 
 \begin{equation}\label{Equ:EvGriff}
  \beta^{-i}(M)\geq \binom{\mathrm{pdim}(M)}{i}, \ \text{for any}\ i\geq 0,
  \end{equation}
 where $\mathrm{pdim}(M)$ denotes the projective dimension of $M$ over $R$.
\end{thm}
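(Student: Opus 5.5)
We reduce to the squarefree case and then argue by induction on the number of variables, using only Hochster's formula (Theorem~\ref{Thm:Hoch}) and Mayer--Vietoris sequences.

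\textbf{Reduction to Stanley--Reisner rings.} First apply polarization to $I$. This replaces $I$ by a squarefree monomial ideal $I^{pol}$ in a larger polynomial ring $R'$. It is standard that polarization preserves all graded Betti numbers, because the polarization variables differences form a regular sequence on $R'/I^{pol}$. Hence it also preserves $\mathrm{pdim}$. So we may assume $M=\mathbb{F}[K]$ for a simplicial complex $K$ on $[m]$, with $p=\mathrm{pdim}(\mathbb{F}[K])$.

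\textbf{Reduction to a top class.} By Hochster's formula, there is some $J\subseteq[m]$ with $\widetilde H_{|J|-p-1}(K|_J)\neq 0$. Every full subcomplex of $K|_J$ is a full subcomplex of $K$. So, as in Lemma~\ref{Lem-retr} but degreewise, $\beta^{-i}(\mathbb{F}[K|_J])\le\beta^{-i}(\mathbb{F}[K])$ for all $i$. Also $\mathrm{pdim}(\mathbb{F}[K|_J])=p$. Replacing $K$ by $K|_J$, it suffices to prove the following claim.

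\emph{Claim.} If $K$ is on $[m]$ and $\widetilde H_{m-p-1}(K)\ne0$, then $\beta^{-i}(\mathbb{F}[K])\ge\binom{p}{i}$ for all $i$.

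We prove the claim by induction on $m$, aiming at Pascal's rule $\binom{p}{i}=\binom{p-1}{i}+\binom{p-1}{i-1}$. Choose a vertex $v$ and split the Hochster sum for $\beta^{-i}$ into two parts: the subsets $J$ with $v\notin J$, and those with $v\in J$. The first part equals $\beta^{-i}(\mathbb{F}[K\setminus v])$.

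For the second part we use $K|_J=\star_{K|_J}v\cup K|_{J\setminus v}$, whose intersection is $\link_{K|_J}v=(\link_K v)|_{J\setminus v}$. The star is a cone, so the Mayer--Vietoris sequence gives
\[\widetilde H_{n}(K|_{J\setminus v})\to \widetilde H_n(K|_J)\to \widetilde H_{n-1}(\link_{K|_J} v)\to \widetilde H_{n-1}(K|_{J\setminus v}).\]
Applied to $J=[m]$, the nonzero class in degree $m-p-1$ forces one of two cases: either $\widetilde H_{m-p-1}(K\setminus v)\neq0$, or $\widetilde H_{m-p-2}(\link_K v)\neq 0$, the latter on at most $m-1$ vertices.

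\textbf{Choosing $v$.} We want $v$ such that both pieces contribute. We would take $v$ to be a vertex outside some fixed face that is maximal among faces meeting the support of the nonzero cycle; other choices are possible. Then we aim to show two things.
\begin{itemize}
\item[(i)] $K\setminus v$ (or a suitable full subcomplex of it) carries a top class of "projective dimension" at least $p-1$. This gives $\binom{p-1}{i}$ from the $v\notin J$ terms by induction.
\item[(ii)] The $v\in J$ terms dominate the Betti numbers of a complex with top class of codimension $p-1$, shifted by one homological degree. Concretely, the map $\widetilde H_{n}(K|_J)\to\widetilde H_{n-1}(\link_{K|_J}v)$ should be surjective for the relevant $J$. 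This would bound the second part below by $\beta^{-(i-1)}$ of the link, restricted appropriately, and the induction gives $\binom{p-1}{i-1}$.
\end{itemize}
Adding the two contributions yields $\binom{p}{i}$.

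\textbf{Main obstacle.} The key difficulty is step (ii). In general neither connecting map in the Mayer--Vietoris sequence need be injective or surjective. So one must pick $v$ so that the image of $\widetilde H_{n-1}(\link_{K|_J}v)\to\widetilde H_{n-1}(K|_{J\setminus v})$ vanishes for the full subcomplexes in question.

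The first thing we would try is a $v$ for which $\link_K v$ is a full subcomplex contained in a cone inside $K\setminus v$. For example, take $v$ whose link lies in the star of another vertex $w$ not adjacent to $v$. Then the inclusion of the link into $K|_{J\setminus v}$ is null-homotopic whenever $w\in J$, giving the surjectivity we need. Finding such a pair $v,w$ compatible with the given top class, and otherwise reducing further, is where we expect most of the work to lie.
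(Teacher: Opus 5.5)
\textbf{There is a genuine gap: steps (i) and (ii) are the whole content of the theorem, and the proposal leaves both open.} Your reductions are essentially fine. Polarization preserves graded Betti numbers and $\mathrm{pdim}$; a variable lying in $I$ just splits off a Koszul factor, costing one Pascal step. Passing to a full subcomplex $K|_J$ with $\widetilde H_{|J|-p-1}(K|_J)\neq 0$ keeps $\mathrm{pdim}=p$ and can only lower each $\beta^{-i}$.

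The termwise Mayer--Vietoris mechanism you propose for (ii) already fails on $K=C_5$. This complex satisfies your Claim with $J=[5]$ and $p=3$. Let $a,b$ be the neighbours of $v$, and let $c$ be the other neighbour of $a$.
\begin{itemize}
\item For $J=\{v,a,b\}$, $K|_J$ is a path and $\link_{K|_J}v=\{a,b\}$ is two points. So the map $\widetilde H_1(K|_J)=0\to\widetilde H_0(\{a,b\})\cong\mathbb{F}$ is not surjective.
\item $K|_{\{a,b,c\}}$ is a full subcomplex of $K\setminus v$ carrying a top class of codimension $p-1=2$. After adding $v$ it becomes the contractible path $c\,a\,v\,b$, so its class dies rather than injecting.
\item Numerically, $K\setminus v$ has Betti numbers $(1,3,2)$ and the $v\in J$ part of the Hochster sum is $(0,2,3,1)$. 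This does not dominate the shifted sequence $(0,1,3,2)$. The needed bound $(0,1,2,1)=\binom{2}{i-1}$ holds there only by a global count.
\item Your device, $\link_K v\subseteq\star_K w$ with $w$ not adjacent to $v$, does not exist in $C_5$: $a$ and $b$ have no common neighbour other than $v$.
\end{itemize}

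Step (i) is also unsupported. The dichotomy at $J=[m]$ may put the class in $\link_K v$ instead of $K\setminus v$. Your selection rule allows, in $\{v\}\sqcup\Delta^{[m-1]}$ (where $p=m-1$), the choice of the isolated vertex $v$, and then $\mathrm{pdim}\,\mathbb{F}[K\setminus v]=0$.

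Your bookkeeping in (ii) is also off. Surjectivity of the connecting map compares the $v\in J$ terms with the link's Betti numbers in the same Hochster index $i$, convolved with $\binom{m-m_v-1}{s}$. The shift to $i-1$ you want comes from the other end of the sequence. Vanishing of $\widetilde H_n(\link_K v|_{J'})\to\widetilde H_n(K|_{J'})$ makes $\widetilde H_n(K|_{J'})$ inject into $\widetilde H_n(K|_{J'\cup\{v\}})$, one index higher. The $C_5$ example shows this injectivity fails too.

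\textbf{How this relates to the paper.} The paper does not prove this statement; it quotes it from \cite{EvanGriff88}. Its own Hochster/Mayer--Vietoris argument, Theorem~\ref{Thm:Binom-Lower-Bound}, proves only the weaker bound with $m-\mdim(K)-1$ in place of $\mathrm{pdim}$. It avoids your obstacle because no map ever has to be surjective. Summing the always-valid inequality $\widetilde\beta_n(\link_K v|_J)\le\widetilde\beta_{n+1}(K|_{J\cup\{v\}})+\widetilde\beta_n(K|_J)$ gives $\beta^{-i}(\mathbb{F}[K])\ge\sum_s\binom{m-m_v-1}{s}\beta^{-(i-s)}(\mathbb{F}[\link_K v])$. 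The induction then closes by Vandermonde, since $\mdim$ drops by exactly one at any $v\in V_{\mdim}(K)$.

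That route cannot be pushed to $\mathrm{pdim}$. For two disjoint edges, $p=3$, yet every vertex link is a point, so the inequality yields only $\binom{2}{i}$. Proving the statement as given needs an ingredient beyond these inequalities: either Evans--Griffith's algebraic argument, or a new, non-termwise way of controlling the $K\setminus v$ contribution. That is exactly what your proposal leaves unsupplied.
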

     
    This theorem provides supporting evidences for the following long-standing conjecture in commutative algebra
     (see Buchsbaum and Eisenbud~\cite[$\S$1.4]{BuchEisen77} and Hartshorne~\cite[Problem 24]{Hart79}). 
     \n
 \noindent  \textbf{Conjecture} (Buchsbaum-Eisenbud-Horrocks).
 Suppose $R$ is a commutative Noetherian ring such that $\operatorname{Spec}(R)$ is connected, and let $M$ be a nonzero, finitely generated $R$-module of finite projective dimension. Then for any finite projective resolution $0 \rightarrow P_d \rightarrow \cdots \rightarrow P_1 \rightarrow P_0 \rightarrow M \rightarrow 0$ of $M$, we have
$$
\operatorname{rank}_R\left(P_i\right) \geq\binom{ c}{i},
$$
where $c=\operatorname{height}_R\left(\operatorname{ann}_R(M)\right)$, the height of the annihilator ideal of $M$.
\n
The reader is referred to Walker~\cite{Walk17},
 Iyengar and Walker~\cite{IyeWalk18}, Boocher and Grifo~\cite{BooGri22} and
 VandeBogert and Walker~\cite{VanWalk25} for more information of the conjecture.\n

For a simplicial complex $K$ on $[m]$, the lower bound of $\beta^{-i}(\mathbb{F}[K])$ in~\eqref{Equ:Binom-1} is implied by Theorem~\ref{Thm:EvaGriff}
since the projective dimension of $\mathbb{F}[K]$
always satisfies: 
$$\mathrm{pdim}(\mathbb{F}[K])=m-\mathrm{depth}(\mathbb{F}[K]) \geq m-\mathrm{Krdim}(\mathbb{F}[K])=m-\dim(K)-1. $$
 Moreover, by Morey and Villarreal~\cite[Corollary 3.33]{MorVilla12} we actually have 
 \begin{equation} \label{Equ:Morvilla}
   \mathrm{pdim}(\mathbb{F}[K])\geq m-\mdim(K)-1.
   \end{equation}
 Then Theorem~\ref{Thm:Main-Binom} follows immediately from~\eqref{Equ:EvGriff} and~\eqref{Equ:Morvilla}. The proof of the inequality~\eqref{Equ:Morvilla} in~\cite{MorVilla12} is based on the following characterization of the depth of $\mathbb{F}[K]$
 due to Smith~\cite{Smith90} (also see Fr\"oberg~\cite{Frob90} for a simple proof). 
 $$\operatorname{depth}(\mathbb{F}[K]) =1+\max 
 \{i \mid K^i \ \text{is Cohen-Macaulay} \},$$
where $K^i=\{ \sigma \in K \mid \operatorname{dim}(\sigma) \leq i\}$ is the $i$-skeleton of $K$ and $-1 \leq i \leq \operatorname{dim}(K)$.
The reader is referred to~\cite{Stanley07, MillSturm04, EnHerz11, Villa15} for more discussion
of the properties of Stanley-Reisner rings.\n

 We will give an alternative proof of Theorem~\ref{Thm:Main-Binom} in the following key theorem of our paper. In our proof, we only uses the Hochster's formula  and some simple
 Mayer-Vietoris argument.
 
 \begin{thm}\label{Thm:Binom-Lower-Bound}
  For any simplicial complex $K$ on $[m]$,
   \begin{equation} \label{Equ:Binom-2-2}
  \beta^{-i}(\mathbb{F}[K]) \geq \binom{m-\mdim(K)-1}{i}, \ \text{for every}\ 0\leq i \leq m-\mdim(K)-1. 
   \end{equation}
   Moreover, for any vertex $v\in V(K)$ and $m_v=|V(\link_K v)|$,  we have for all $i,j\geq 0$
   \n
   
   $\begin{aligned}
    \mathrm{(a)} \   \beta^{-i,2(j+1)}(\mathbb{F}[K]) 
     &\geq   \sum_{0\leq s \leq j} \binom{m-m_v-1}{s}\beta^{-(i-s),2(j-s)}(\mathbb{F}[\link_K v]) \\
     &\qquad \quad \ + \beta^{-i,2(j+1)}(\mathbb{F}[K\setminus v]) - \beta^{-i,2j}(\mathbb{F}[K\setminus v]).
   \end{aligned}$\n
   
    $\begin{aligned}
    \mathrm{(b)}  \   \beta^{-i}(\mathbb{F}[K]) & \geq \sum_{0\leq s \leq i} \binom{m-m_v-1}{s}  \beta^{-(i-s)}(\mathbb{F}[\link_K v]).
   \end{aligned}$\n
   
    $\begin{aligned}
   \mathrm{(c)} \ \wt{D}(K) & \geq 2^{m-m_v-1}\wt{D}(\link_K v).
   \end{aligned}$
   \end{thm}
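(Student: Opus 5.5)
The plan is to prove the vertex inequalities (a), (b), (c) first, by a Mayer--Vietoris argument inside Hochster's formula (Theorem~\ref{Thm:Hoch}), and then to derive \eqref{Equ:Binom-2-2} from (b) by induction on $m$. Write $L=\link_K v$; it is a simplicial complex on its $m_v$ vertices. The geometric input is the following decomposition. For $J\subseteq [m]\setminus\{v\}$, the full subcomplex $K|_{J\cup\{v\}}$ is the union of $K|_J$ and the cone $v*L_J$, where $L_J:=L|_{J\cap V(L)}$, and
\[ K|_J\cap (v*L_J)=L_J. \]
Since the cone is acyclic, the reduced Mayer--Vietoris sequence, taken with augmented chains so that $\{\varnothing\}$ has $\widetilde{H}_{-1}\neq 0$, reads
\[ \widetilde H_k(K|_{J\cup v})\xrightarrow{\partial}\widetilde H_{k-1}(L_J)\to \widetilde H_{k-1}(K|_J). \]
By exactness, $\operatorname{im}\partial$ is the kernel of the second map, so
\[ \widetilde\beta_k(K|_{J\cup v})\ \ge\ \widetilde\beta_{k-1}(L_J)-\widetilde\beta_{k-1}(K|_J). \]
I would check the degenerate cases separately: $J=\varnothing$, and $J\neq\varnothing$ with $J\cap V(L)=\varnothing$. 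In the latter case $K|_{J\cup v}$ is $K|_J$ plus a disjoint point, and the inequality holds directly.

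For (a), split the sum in \eqref{Equ:Hochster} for $\beta^{-i,2(j+1)}(\mathbb{F}[K])$ into subsets $J'$ of size $j+1$ according to whether $v\in J'$.
\begin{itemize}
\item The subsets with $v\notin J'$ contribute exactly $\beta^{-i,2(j+1)}(\mathbb{F}[K\setminus v])$.
\item The subsets with $v\in J'$ have the form $J'=J\cup\{v\}$ with $|J|=j$, and they contribute $\sum_J\widetilde\beta_{j-i}(K|_{J\cup v})$.
\end{itemize}
Apply the inequality above with $k=j-i$. The subtracted terms sum to $\beta^{-i,2j}(\mathbb{F}[K\setminus v])$. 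For the link terms, write $J=J_1\sqcup J_2$ with $J_1\subseteq V(L)$, $|J_1|=j-s$, and $J_2\subseteq[m]\setminus(V(L)\cup\{v\})$, $|J_2|=s$. There are $\binom{m-m_v-1}{s}$ choices of $J_2$. Since $(j-s)-(i-s)-1=j-i-1$, summing $\widetilde\beta_{j-i-1}(L|_{J_1})$ over $J_1$ gives $\beta^{-(i-s),2(j-s)}(\mathbb{F}[L])$. Together these give (a).

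For (b), sum (a) over $j\ge0$. The $K\setminus v$ terms telescope to $-\beta^{-i,0}(\mathbb{F}[K\setminus v])$, which vanishes for $i\ge1$. The case $i=0$ I check directly: both sides equal $1$. For (c), sum (b) over $i$; the right-hand side becomes $\sum_s\binom{m-m_v-1}{s}\widetilde D(L)=2^{m-m_v-1}\widetilde D(L)$.

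For \eqref{Equ:Binom-2-2}, induct on $m$; the case $m=0$, the irrelevant complex, is trivial. Let $d=\mdim(K)$, pick a maximal simplex $\sigma$ of dimension $d$, and pick $v\in\sigma$.
\begin{itemize}
\item $\sigma\setminus v$ is a maximal simplex of $L$, so $\mdim(L)\le d-1$.
\item $\sigma\setminus v\subseteq V(L)$, so $d\le m_v$.
\end{itemize}
Since $m_v<m$, the induction hypothesis gives
\[ \beta^{-(i-s)}(\mathbb{F}[L])\ge\binom{m_v-\mdim(L)-1}{i-s}\ge\binom{m_v-d}{i-s}. \]
Substituting into (b) and applying Vandermonde's identity,
\[ \sum_s\binom{m-m_v-1}{s}\binom{m_v-d}{i-s}=\binom{m-d-1}{i}, \]
gives the claim. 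When $d=0$ the vertex $v$ is isolated, $L=\{\varnothing\}$ and $m_v=0$, and the same computation goes through with the convention $\mdim(\{\varnothing\})=-1$.

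I expect the main obstacle to be the bookkeeping in (a): getting the exact-sequence inequality to hold uniformly, including the degree $-1$ and empty-set conventions, and matching the indices of Hochster's formula in the decomposition $J=J_1\sqcup J_2$.
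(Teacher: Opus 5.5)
Your proposal is correct and follows the paper's proof essentially step for step. The paper applies Mayer--Vietoris to $K|_{J\cup\{v\}}=\star_K v|_{J\cup\{v\}}\cup K|_J$ (your cone $v*L_J$ is exactly $\star_K v|_{J\cup\{v\}}$), sums the resulting inequality via Hochster's formula to get (a), then derives (b) and (c), and inducts on $m$ using a vertex $v\in V_{\mdim}(K)$ and Vandermonde's identity; your separate handling of the $i=0$ telescoping term and of the empty-set conventions makes explicit details the paper leaves implicit.
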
 
  \begin{proof} 
	 For a vertex $v\in V(K)=[m]$ and any subset $J\subset [m]\setminus \{v\}$, consider 
	 $$\left(\star_K v|_{J\cup\{v\}}, (K\setminus v)|_{J\cup\{v\}}\right) = \left(\star_K v|_{J\cup\{v\}}, K|_J \right).$$ 
Obviously
	$$\star_Kv|_{J\cup\{v\}}\cap K|_J=\link_K v|_J,\ \ \star_Kv|_{J\cup\{v\}} \cup K|_J=K|_{J\cup \{v\}}.$$
	So by the Mayer-Vietoris sequence of the pair, we obtain a long exact sequence:
 \begin{equation} \label{Equ:Exact-Seq}
 \cdots \rightarrow \wt{H}_{n+1}(K|_{J\cup\{v\}})\longrightarrow \wt{H}_n(\link_K v|_J) \overset{i_*}{\longrightarrow}\wt{H}_n(K|_J)\longrightarrow \wt{H}_{n}
 (K|_{J\cup\{v\}})\rightarrow
   \end{equation}
	where $i_*$ is induced by the inclusion $i: \link_K v|_J\hookrightarrow K|_J$. 
	It follows that
	\begin{equation}\label{Equ:Reduced-Relation}
		\wt{\beta}_n(\link_K v|_J)\leq \wt{\beta}_{n+1}(K|_{J\cup\{v\}})+\wt{\beta}_n(K|_J), \ \text{for all}\ n\in\Z.
	\end{equation}
	Note that the equality in~\eqref{Equ:Reduced-Relation} holds for all $n\in \Z$ if and only if $i_*$ is surjective for all $n\in \Z$.\n
	In particular, setting $n=|J|-i-1$ and notice $K|_J=(K\setminus v)|_J$, we obtain 
 	\begin{equation}\label{Equ:Reduced-Relation-2}
		\wt{\beta}_{|J|-i-1}(\link_K v|_J) \leq \wt{\beta}_{|J|-i}(K|_{J\cup\{v\}})+\wt{\beta}_{|J|-i-1}((K\setminus v)|_J),\ \text{for any}\ i\in \Z.
	\end{equation}
	\n
	
	If we fix $i$ and sum up~\eqref{Equ:Reduced-Relation-2} for all the subsets $J\subset [m]\setminus \{v\}$ with $|J|=j$, we obtain from the Hochster's formula~\eqref{Equ:Hochster} that \n
	
\begin{itemize}
\item	On the left side of~\eqref{Equ:Reduced-Relation-2}, if $|J\cap \link_K v|=|J|-s$, then
	  \begin{align*}
	   \wt{\beta}_{|J|-i-1}(\link_K v|_J) &= \wt{\beta}_{|J\cap \link_K v|+s-i-1}(\link_K v|_{J\cap \link_K v})\\
	   &=\wt{\beta}_{(|J|-s)-(i-s)-1}(\link_K v|_{J\cap \link_K v}).
	   \end{align*}
	      So the sum of the left side of~\eqref{Equ:Reduced-Relation-2} is 
	      $$ \sum_{0\leq s \leq j} \binom{m-m_v-1}{s}\beta^{-(i-s),2(j-s)}(\mathbb{F}[\link_K v]).$$

\item On the right side of~\eqref{Equ:Reduced-Relation-2},
  the sum of the second term gives $\beta^{-i,2j}(\mathbb{F}[K\setminus v])$. But writing the sum of the first term is a little tricky. Indeed, we have
  \begin{align*}
    \beta^{-i,2(j+1)}(\mathbb{F}[K]) 
    &= \underset{|J|=j+1}{\sum_{J\subseteq [m]\backslash \{v\}}} \wt{\beta}_{|J|-i-1}(K|_J) + \underset{|J|=j}{\sum_{J\subseteq [m]\backslash \{v\}}} \wt{\beta}_{|J\cup\{v\}|-i-1}(K|_{J\cup\{v\}}) \\
\big(\text{since}\ K|_J=(K\setminus v)|_J \big)\    &= \underset{|J|=j+1}{\sum_{J\subseteq [m]\backslash \{v\}}} \wt{\beta}_{|J|-i-1}((K\setminus v)|_J) + \underset{|J|=j}{\sum_{J\subseteq [m]\backslash \{v\}}} \wt{\beta}_{|J|-i}(K|_{J\cup\{v\}}) \\
 &= \beta^{-i,2(j+1)}(\mathbb{F}[K\setminus v]) +
 \underset{|J|=j}{\sum_{J\subseteq [m]\backslash \{v\}}} \wt{\beta}_{|J|-i}(K|_{J\cup\{v\}}).
    \end{align*}
    So the sum of the first term is 
$ \beta^{-i,2(j+1)}(\mathbb{F}[K])- \beta^{-i,2(j+1)}(\mathbb{F}[K\setminus v]) $.
\end{itemize}

\n
 Combining the above argument, we obtain
  	\begin{align*}
	    \sum_{0\leq s \leq j} \binom{m-m_v-1}{s}\beta^{-(i-s),2(j-s)}(\mathbb{F}[\link_K v]) &\leq
	   \beta^{-i,2(j+1)}(\mathbb{F}[K])- \beta^{-i,2(j+1)}(\mathbb{F}[K\setminus v]) \\
	   & + \beta^{-i,2j}(\mathbb{F}[K\setminus v]),
	\end{align*}
 which proves (a). 	Moreover,
	we can easily derive (b)
	 by summing up the inequality in (a) for all $j\geq 0$.\n
	  
	Furthermore, by summing up the inequality in (b) for all $i\geq 0$, we obtain
	  \begin{align*}
	    \wt{D}(K) = \sum_{i\geq 0} \beta^{-i}(\mathbb{F}[K]) & \geq \sum_{i\geq 0} \sum_{0\leq s \leq i} \binom{m-m_v-1}{s}  \beta^{-(i-s)}(\mathbb{F}[\link_K v])   \\
	     & = \sum_{i\geq 0}\sum_{0\leq q\leq i} \binom{m-m_v-1}{i-q}  \beta^{-q}(\mathbb{F}[\link_K v])  \\
	     &=  \sum_{q\geq 0} 2^{m-m_v-1} \beta^{-q}(\mathbb{F}[\link_K v])\\
	     & = 2^{m-m_v-1}\wt{D}(\link_K v).
	  \end{align*}
	This proves (c).\n
	
	To prove the inequality~\eqref{Equ:Binom-2-2}, we do induction on the number of vertices of $K$. Assume that the result is true when $|V(K)|$ is less than $m$.	
	We choose a vertex $v\in V_{\mdim}(K)$.
	Then   
 $\mdim(K)=\mdim(\link_K v)+1 \leq m_v < m$. So by our induction hypothesis,
 \[ \beta^{-(i-s)}(\mathbb{F}[\link_K v]) \geq 
  \binom{m_v-\mdim(\link_K v)-1}{i-s}. \]
 
  Then combining this inequality with (a), we obtain
 \begin{align} 
   \beta^{-i}(\mathbb{F}[K]) &\geq  \sum_{0\leq s \leq i} \binom{m-m_v-1}{s}  \beta^{-(i-s)}(\mathbb{F}[\link_K v])   \label{Equ:Weak-Fk-1} \\   
     &\geq  \sum_{0\leq s \leq i} \binom{m-m_v-1}{s}   
    \binom{m_v-\mdim(\link_K v)-1}{i-s}  \label{Equ:Weak-Fk-2}  \\
    &=  \sum_{0\leq s \leq i} \binom{m-m_v-1}{s}   
    \binom{m_v-\mdim(K)}{i-s} \notag \\
    &= \binom{m-\mdim(K)-1}{i}, \notag
   \end{align}
  where the last ``$=$'' is just the Vandermonde identity of binomial coefficients. 
	This finishes the induction and the theorem is proved.
 \end{proof}

 \begin{rem}
 If the $R$-module $M$ in Buchsbaum-Eisenbud-Horrocks conjecture satisfies some extra conditions, there may exist some stronger lower bounds of $\beta^{-i}(M)$, see Charalambous, Evans and Miller~\cite{CharEvMill90},
 Charalambous and Evans~\cite{CharEvan91},
   Erman~\cite{Erman10} and Boocher and 
   Wiggleesworth~\cite{BooWig21} for such kind of results.
 \end{rem}

\begin{cor}\label{Cor:bigrad-Formula}
	If $K$ is a weakly taut simplicial complex with $m$ vertices, then for any vertex $v\in V_{\mdim}(K)$,
 $m_v=|V(\link_K v)|$,  we have for all $i,j\geq 0$,\n
 
 $\begin{aligned}
  \mathrm{(a)}   \    \beta^{-i,2(j+1)}(\mathbb{F}[K]) 
     &=   \sum_{0\leq s \leq j} \binom{m-m_v-1}{s}\beta^{-(i-s),2(j-s)}(\mathbb{F}[\link_K v])   \\
     &\qquad + \beta^{-i,2(j+1)}(\mathbb{F}[K\setminus v]) - \beta^{-i,2j}(\mathbb{F}[K\setminus v]).
 \end{aligned}$\n
 
 $\begin{aligned}
 \mathrm{(b)}   \  \beta^{-i}(\mathbb{F}[K])  = \sum_{0\leq s \leq i} \binom{m-m_v-1}{s}  \beta^{-(i-s)}(\mathbb{F}[\link_K v]).
 \end{aligned}$\n

   $\begin{aligned}
 \mathrm{(c)}  \ \wt{D}(K)  = 2^{m-m_v-1}\wt{D}(\link_K v).
 \end{aligned}$\n
 
   $\begin{aligned}
 \mathrm{(d)}  \ \link_K v\ \text{is weakly taut.} 
 \end{aligned}$
 	\end{cor}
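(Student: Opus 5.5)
The plan is to force every inequality in the proof of Theorem~\ref{Thm:Binom-Lower-Bound} to be an equality, using weak tautness. Fix $v\in V_{\mdim}(K)$. Then $\mdim(\link_K v)=\mdim(K)-1$, and $m_v\geq \mdim(K)$. The chain \eqref{Equ:Weak-Fk-1}--\eqref{Equ:Weak-Fk-2} gives, for every $i$,
\[
\beta^{-i}(\mathbb{F}[K])\geq \sum_{s}\binom{m-m_v-1}{s}\beta^{-(i-s)}(\mathbb{F}[\link_K v])\geq \binom{m-\mdim(K)-1}{i}.
\]
Since $K$ is weakly taut, \eqref{Equ:Pattern-WK} says that the two ends agree for every $i$, so both inequalities are equalities for every $i$. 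The first equality is exactly (b). Summing (b) over $i$, exactly as in the proof of (c) in Theorem~\ref{Thm:Binom-Lower-Bound}, gives (c).

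For (d), the equality in \eqref{Equ:Weak-Fk-2} for every $i$ reads
\[
\sum_s\binom{m-m_v-1}{s}\Big(\beta^{-(i-s)}(\mathbb{F}[\link_K v])-\binom{m_v-\mdim(\link_K v)-1}{i-s}\Big)=0.
\]
Each bracket is $\geq 0$ by Theorem~\ref{Thm:Binom-Lower-Bound} applied to $\link_K v$. The $s=0$ coefficient $\binom{m-m_v-1}{0}=1$ is positive because $m_v\leq m-1$. Hence the $s=0$ bracket vanishes for every $i$, so $\beta^{-i}(\mathbb{F}[\link_K v])=\binom{m_v-\mdim(\link_K v)-1}{i}$ for all $i$. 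If $m_v-\mdim(\link_K v)-1<0$ this would give $\beta^0=0$, which is impossible; so the exponent is nonnegative and summing yields $\wt{D}(\link_K v)=2^{m_v-\mdim(\link_K v)-1}$, which is (d). (One can equally sum (c) and compare with $2^{m-\mdim(K)-1}$.)

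For (a), (b) is the sum over $j$ of the inequalities (a) of Theorem~\ref{Thm:Binom-Lower-Bound} for fixed $i$. Each of these is an inequality ``LHS $\geq$ RHS''. Their totals agree, so each difference is a nonnegative term in a zero sum and must vanish, giving (a) for all $j$. One caveat is that the telescoping $K\setminus v$ terms must sum finitely. They do, since $\beta^{-i,2j}(\mathbb{F}[K\setminus v])=0$ for $j>m$, so $\sum_j(\beta^{-i,2(j+1)}-\beta^{-i,2j})(\mathbb{F}[K\setminus v])=-\beta^{-i,0}(\mathbb{F}[K\setminus v])$. For $i=0$ that term is $-1$, and for $i\geq1$ it is $0$.

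The main obstacle is the $i=0$, $j=0$ bookkeeping. There $\beta^{0,2}(\mathbb{F}[K])=0$, and the inequality in Theorem~\ref{Thm:Binom-Lower-Bound}(a) must be read with $\beta^{-i,0}$ terms included so that summing over $j\geq 0$ reproduces (b) exactly. I would check the derivation there directly: summing (a) over $j\geq 0$ misses $\beta^{0,0}(\mathbb{F}[K])=1$ on the left, and this is exactly compensated by the telescoping $-1$. The termwise-equality argument then still applies.
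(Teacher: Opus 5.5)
Your proposal is correct and follows the paper's proof: weak tautness forces equality throughout the chain \eqref{Equ:Weak-Fk-1}--\eqref{Equ:Weak-Fk-2} and in each inequality of Theorem~\ref{Thm:Binom-Lower-Bound}(a), which gives (a)--(c). Your handling of the $\beta^{-i,0}$ boundary terms when summing over $j$ fills in a detail the paper leaves implicit. For (d) the paper compares (c) with $\widetilde{D}(K)=2^{m-\mdim(K)-1}$, which is the alternative you mention, rather than isolating the $s=0$ bracket.
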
	
\begin{proof}
   	Since $K$ is weakly taut and $v\in V_{\mdim}(K)$, the inequalities in Theorem~\ref{Thm:Binom-Lower-Bound} (a) and in~\eqref{Equ:Weak-Fk-1} and~\eqref{Equ:Weak-Fk-2} must all take equal signs, which gives (a) (b) and (c) here. Moreover, since  $\widetilde{D}(K)=2^{m-\mdim(K)-1}$, 
   	by the fact that $\mdim(\link_K v)=\mdim(K)-1$, we obtain
  $\widetilde{D}(\mathbb{F}[\link_K v])=2^{m_v-\mdim(\link_K v)-1}$ from (c). This proves (d).
\end{proof}

	\begin{rem}	  
	 For a vertex $u\notin V_{\mdim}(K)$, 
	 $\link_K u$ may not be weakly taut even if $K$ is
	  (see Figure~\ref{p:Example-1} for example). 
	\end{rem}

 \begin{figure}[h]
      \includegraphics[width=0.17\textwidth]{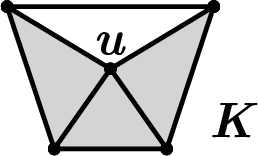}\\
         \caption{$K$ is weakly taut but $\link_K u$ is not ($u\notin V_{\mdim}(K)$).}\label{p:Example-1}
      \end{figure}

 \vskip .4cm
  
  \section{Weakly Taut Simplicial Complexes} \label{Sec-Weakly-Taut}
  In this section, we study various properties of
  weakly taut simplicial complexes.

   \begin{lem} \label{Lem:Weak-Taut-Join}
	Suppose $L$ is a (weakly) taut simplicial complex.
	Then a simplicial complex $K$ is (weakly) taut if and only if $K*L$ is (weakly) taut. In particular,
	$K$ is (weakly) taut if and only if $K*\Delta^{[r]}$ is (weakly) taut.
	\end{lem}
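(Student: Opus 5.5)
The argument rests on two multiplicativity facts: $\wt{D}$ is multiplicative under joins (Lemma~\ref{Lem:Join-K}), and the relevant dimension invariants are additive under joins. Once these are in place, the statement reduces to comparing two products, each factor of which is bounded below by Theorem~\ref{Thm:Binom-Lower-Bound} or by inequality~\eqref{Equ:uto-dim}.

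First I record the combinatorics of the join. Let $K$ be on $m$ vertices and $L$ on $n$ vertices, both nonempty, with disjoint vertex sets.
\begin{itemize}
\item The maximal simplices of $K*L$ are exactly the sets $\sigma\sqcup\tau$ with $\sigma\in\Delta^{max}(K)$ and $\tau\in\Delta^{max}(L)$.
\item Since $\dim(\sigma\sqcup\tau)=\dim\sigma+\dim\tau+1$, we get $\mdim(K*L)=\mdim(K)+\mdim(L)+1$, and likewise $\dim(K*L)=\dim(K)+\dim(L)+1$.
\item The join has $m+n$ vertices. Hence
\[
(m+n)-\mdim(K*L)-1=\bigl(m-\mdim(K)-1\bigr)+\bigl(n-\mdim(L)-1\bigr),
\]
and the same identity holds with $\dim$ in place of $\mdim$.
\end{itemize}
Consequently the target values multiply:
\[
2^{(m+n)-\mdim(K*L)-1}=2^{m-\mdim(K)-1}\cdot 2^{n-\mdim(L)-1}.
\]

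Next, by Lemma~\ref{Lem:Join-K} we have $\wt{D}(K*L)=\wt{D}(K)\,\wt{D}(L)$. Now assume $L$ is weakly taut, so $\wt{D}(L)=2^{n-\mdim(L)-1}$. Dividing by this positive factor gives
\[
\wt{D}(K*L)=2^{(m+n)-\mdim(K*L)-1}\iff \wt{D}(K)=2^{m-\mdim(K)-1}.
\]
That is exactly the claimed equivalence for weak tautness. The taut case is identical, using $\dim$ in place of $\mdim$.

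For the special case, $\Delta^{[r]}$ is taut, and hence weakly taut, because $\wt{D}(\Delta^{[r]})=1=2^{r-(r-1)-1}$ by Lemma~\ref{Lem:Simplex}(a).

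The only point needing care is degenerate complexes. Lemma~\ref{Lem:Join-K} assumes $K$ and $L$ are nonempty. I would also check that the irrelevant complex $\{\varnothing\}$ is handled by the conventions: joining with it is the identity, and $\mdim(\{\varnothing\})=-1$ with $0$ vertices gives exponent $0$ and $\wt{D}=1$. So that case is trivial and consistent with the formulas above.

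The main potential obstacle is confirming that $\mdim$ is additive under joins. This follows from the description of the maximal faces of a join, which in turn holds because a face $\sigma\sqcup\tau$ is maximal if and only if both $\sigma$ and $\tau$ are maximal.
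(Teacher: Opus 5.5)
Your proposal is correct and is essentially the paper's own argument: $\wt{D}(K*L)=\wt{D}(K)\,\wt{D}(L)$ from Lemma~\ref{Lem:Join-K}, together with additivity of the vertex count, $\dim$ and $\mdim$ under joins, after which the equivalence follows by dividing out the (weakly) taut factor $\wt{D}(L)$. The lower bounds you cite in your opening sentence are never actually used, and your checks on the maximal faces of a join and on $\Delta^{[r]}$ being taut only fill in details the paper leaves implicit.
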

	 \begin{proof}
  By Lemma~\ref{Lem:Join-K}, $\widetilde{D}(K*L)=\widetilde{D}(K)\widetilde{D}(L)$.  In addition,
  we have 
  $$ \dim(K*L)=\dim(K)+\dim(L)+1,\  \mdim(K*L)=\mdim(K)+\mdim(L)+1.$$
 Then since $|V(K*L)|=|V(K)|+|V(L)|$,  the lemma follows from the definition of tautness and weakly tautness immediately.  
 \end{proof}
	
	 The following theorem provides a useful criterion
	 for us to judge whether a simplicial complex is weakly taut.
	 
  \begin{thm}\label{Thm:Weakly-Taut}
	A simplicial complex	$K$ on $[m]$ is weakly taut if and only if there exists a vertex $v$ such that
		\begin{enumerate}
			\item[(a)] $\link_K v$ is weakly taut,\n
			\item[(b)] for each $J \subset [m]\setminus \{v\}$, the inclusion $\link_K v|_J \hookrightarrow K|_J=(K\setminus v)|_J$ induces a surjection $ \widetilde{H}_*(\link_K v|_J)\rightarrow \widetilde{H}_*((K\setminus v)|_J)$.
		\end{enumerate}
	\end{thm}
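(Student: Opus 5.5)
The plan is to run both directions through the Mayer--Vietoris sequence~\eqref{Equ:Exact-Seq} from the proof of Theorem~\ref{Thm:Binom-Lower-Bound}. That proof already notes that equality holds in~\eqref{Equ:Reduced-Relation} for all $n$ exactly when $i_*\colon \wt{H}_*(\link_K v|_J)\to \wt{H}_*(K|_J)$ is surjective in every degree. Condition (b) is therefore the statement that equality holds in~\eqref{Equ:Reduced-Relation-2} for every $J\subseteq[m]\setminus\{v\}$ and every $i$.

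For the ``only if'' direction, I choose $v\in V_{\mdim}(K)$. Corollary~\ref{Cor:bigrad-Formula}(d) then gives (a). For (b), Corollary~\ref{Cor:bigrad-Formula}(a) says that the inequality of Theorem~\ref{Thm:Binom-Lower-Bound}(a) is an equality for all $i,j$. That inequality was obtained by summing the termwise inequalities~\eqref{Equ:Reduced-Relation-2} over all $J$ with $|J|=j$. A sum of nonnegative slacks that vanishes must vanish termwise, so~\eqref{Equ:Reduced-Relation-2} is an equality for every $J$ and every $i$. Since $n=|J|-i-1$ runs over all integers as $i$ varies, equality holds in~\eqref{Equ:Reduced-Relation} for all $n$, and hence $i_*$ is surjective. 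The only bookkeeping point is to confirm that the reindexing in the left-hand sum (splitting $J$ by $|J\cap V(\link_K v)|$) loses no terms, so that termwise equality really is forced.

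For the ``if'' direction, assume $v$ satisfies (a) and (b). By (b) every instance of~\eqref{Equ:Reduced-Relation-2} is an equality, so summing as in the proof of Theorem~\ref{Thm:Binom-Lower-Bound} makes (a), (b) and (c) of that theorem equalities. In particular $\wt{D}(K)=2^{m-m_v-1}\wt{D}(\link_K v)$. By hypothesis (a), $\wt{D}(\link_K v)=2^{m_v-\mdim(\link_K v)-1}$, so
\[
\wt{D}(K)=2^{m-\mdim(\link_K v)-2}.
\]
It remains to compare $\mdim(\link_K v)+1$ with $\mdim(K)$. If $\tau$ is a maximal simplex of $\link_K v$, then $\tau\cup\{v\}$ is a maximal simplex of $K$. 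Hence $\mdim(K)\le \mdim(\link_K v)+1$, which gives $\wt{D}(K)\le 2^{m-\mdim(K)-1}$. The lower bound~\eqref{Equ:uto-mdim} (equivalently Theorem~\ref{Thm:Binom-Lower-Bound}) gives the reverse inequality, so equality holds and $K$ is weakly taut.

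The main obstacle I expect is the degenerate case where $v$ is an isolated vertex, so that $\link_K v=\{\varnothing\}$. There I will check the conventions: $m_v=0$, $\mdim(\{\varnothing\})=-1$, and $\wt{D}(\{\varnothing\})=1$, using $\wt\beta_{-1}(\{\varnothing\})=1$. With these values the formulas above remain valid. I will also verify that the comparison $\mdim(K)\le\mdim(\link_K v)+1$ still holds in that case, since $\{v\}$ is then a maximal simplex of $K$.
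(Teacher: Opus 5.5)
Your proposal is correct and is essentially the paper's proof. In the forward direction you choose $v\in V_{\mdim}(K)$, get (a) from Corollary~\ref{Cor:bigrad-Formula}(d), and force termwise equality in~\eqref{Equ:Reduced-Relation-2} from the equality case of Theorem~\ref{Thm:Binom-Lower-Bound}; you cite Corollary~\ref{Cor:bigrad-Formula}(a) where the paper cites equality in Theorem~\ref{Thm:Binom-Lower-Bound}(c), but this is the same nonnegative-slack argument. In the reverse direction you combine $\wt{D}(K)=2^{m-\mdim(\link_K v)-2}$ with $\mdim(K)\le\mdim(\link_K v)+1$ and the lower bound~\eqref{Equ:uto-mdim}, exactly as the paper does; the paper merely phrases this as first deducing $\mdim(\link_K v)=\mdim(K)-1$.
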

	\begin{proof}
	``$\Rightarrow$'' Suppose $K$ is weakly taut. Choose an arbitrary vertex $v\in V_{\mdim}(K)$. Then by Corollary~\ref{Cor:bigrad-Formula} (d),
	 $\link_K v$ is weakly taut. 
	 Moreover, since $K$ is weakly taut, the equality in Theorem~\ref{Thm:Binom-Lower-Bound} (c) holds.
	 This implies that the equality in~\eqref{Equ:Reduced-Relation}
	 holds for all $n\in \Z$. Then in the long exact sequence~\eqref{Equ:Exact-Seq}, the map
	  $i_*: \widetilde{H}_n(\link_K v|_J)\rightarrow \widetilde{H}_n(K|_J) = \widetilde{H}_n((K\setminus v)|_J) $ must be surjective for all $n\in \Z$.\n
	  
	  ``$\Leftarrow$'' The condition (b) implies that
	  for each $J \subset [m]\setminus \{v\}$,
	  the equality in~\eqref{Equ:Reduced-Relation}
	 holds for all $n\in \Z$, and so the equality in~\eqref{Equ:Reduced-Relation-2} holds for all $i\in \Z$. This further implies that 
	  the equality in Theorem~\ref{Thm:Binom-Lower-Bound} (c) hold, i.e.
	   $$\wt{D}(K)=2^{m-m_v-1}\cdot \wt{D}(\link_K v).$$
	    By condition (a), $\wt{D}(\link_K v)=2^{m_v-\mdim(\link_K v)-1}$. So we have
	 \begin{equation} \label{Equ:D-K}
	  \wt{D}(K)=2^{m-1-m_v} \cdot 2^{m_v-\mdim(\link_K v)-1}=2^{m-\mdim(\link_K v)-2}.
	  \end{equation}
	Moreover, since $\widetilde{D}(K) \geq 2^{m-\mathrm{mdim}(K)-1}$ by~\eqref{Equ:uto-mdim}, it follows that 
	 $$\mdim(\link_K v)\leq \mdim(K)-1.$$
	 But obviously $\mdim(\link_K v)\geq \mdim (K)-1$. So $\mdim(\link_K v)= \mdim (K)-1$. Plugging this into~\eqref{Equ:D-K} gives 
	 $\wt{D}(K)= 2^{m-\mathrm{mdim}(K)-1}$, i.e. $K$ is weakly taut.
	\end{proof}
	
	By the proof of Theorem~\ref{Thm:Weakly-Taut}, we obtain the following corollary which characterizes the vertices in $V_{\mdim}(K)$ when $K$ is weakly taut.
	
	\begin{cor} \label{Cor:mdimv-char}
	 If $K$ is weakly taut, then a vertex $v$ belongs to $V_{\mdim}(K)$ if and only if the conditions in Theorem~\ref{Thm:Weakly-Taut} (a) and (b) hold for $v$.
	 \end{cor}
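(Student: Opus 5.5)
We prove both directions of Corollary~\ref{Cor:mdimv-char}, assuming throughout that $K$ is weakly taut.

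\emph{``Only if''.} Let $v\in V_{\mdim}(K)$. The ``$\Rightarrow$'' part of the proof of Theorem~\ref{Thm:Weakly-Taut} chose an arbitrary vertex of $V_{\mdim}(K)$. For that vertex, Corollary~\ref{Cor:bigrad-Formula}~(d) gives condition (a). The equality case of Theorem~\ref{Thm:Binom-Lower-Bound}~(c) forces equality in \eqref{Equ:Reduced-Relation} for every $J\subset[m]\setminus\{v\}$ and every $n$. By exactness of \eqref{Equ:Exact-Seq}, this equality is equivalent to surjectivity of $i_*$, which is condition (b). So this direction is just a restatement of that argument, and I would only point out that nothing in it used anything about $v$ beyond $v\in V_{\mdim}(K)$.

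\emph{``If''.} Suppose $v$ is any vertex satisfying (a) and (b). I would rerun the ``$\Leftarrow$'' argument of Theorem~\ref{Thm:Weakly-Taut} for this particular $v$.
\begin{itemize}
\item Condition (b) makes every inequality \eqref{Equ:Reduced-Relation} an equality. Summing over $J$ as in the proof of Theorem~\ref{Thm:Binom-Lower-Bound} then gives $\wt{D}(K)=2^{m-m_v-1}\wt{D}(\link_K v)$.
\item Condition (a) gives $\wt{D}(\link_K v)=2^{m_v-\mdim(\link_K v)-1}$.
\item Hence $\wt{D}(K)=2^{m-\mdim(\link_K v)-2}$, as in \eqref{Equ:D-K}.
\end{itemize}
This step uses only (a) and (b), not the weak tautness of $K$. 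Now bring in the hypothesis that $K$ is weakly taut: $\wt{D}(K)=2^{m-\mdim(K)-1}$. Comparing the two exponents gives $\mdim(\link_K v)=\mdim(K)-1$. (In the proof of the theorem this equality was instead obtained from the lower bound \eqref{Equ:uto-mdim} together with the trivial inequality; here it follows directly.)

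The remaining and essential step is to turn $\mdim(\link_K v)=\mdim(K)-1$ into $v\in V_{\mdim}(K)$. Let $\tau$ be a maximal simplex of $\link_K v$ with $\dim\tau=\mdim(\link_K v)$. Maximal simplices of $\link_K v$ correspond bijectively to maximal simplices of $K$ containing $v$, via $\tau\mapsto\tau\cup\{v\}$. So $\xi=\tau\cup\{v\}$ is a maximal simplex of $K$ with
\[
\dim\xi=\dim\tau+1=\mdim(K).
\]
Therefore $\xi\in\Delta^{max}_{\mdim}(K)$, and since $v\in\xi$ we get $v\in V_{\mdim}(K)$.

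The only point needing care is this correspondence of maximal simplices; it follows directly from the definition of the link, since $\tau\cup\{v\}\in K$ and any face of $K$ strictly containing $\tau\cup\{v\}$ would give a face of $\link_K v$ strictly containing $\tau$. The rest is bookkeeping already carried out in the proof of Theorem~\ref{Thm:Weakly-Taut}.
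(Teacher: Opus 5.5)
Your proof is correct and follows the paper's argument. The ``only if'' direction is read off from the proof of Theorem~\ref{Thm:Weakly-Taut}. For ``if'', you use (b) to get $\wt{D}(K)=2^{m-m_v-1}\wt{D}(\link_K v)$, then combine this with (a) and weak tautness to force $\mdim(\link_K v)=\mdim(K)-1$, which gives $v\in V_{\mdim}(K)$. Your explicit correspondence between maximal simplices of $\link_K v$ and maximal simplices of $K$ containing $v$ fills in the final step, which the paper states only briefly.
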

	 \begin{proof}
	   The ``only if'' direction is already proved in Theorem~\ref{Thm:Weakly-Taut}. If $v$ satisfies the condition in Theorem~\ref{Thm:Weakly-Taut} (b),
	   then by the argument in Theorem~\ref{Thm:Weakly-Taut},
	   we have
	    $$\wt{D}(K)=2^{m-m_v-1}\cdot \wt{D}(\link_K v).$$
	 	   Then since $K$ is weakly taut, i.e. $\wt{D}(K)= 2^{m-\mathrm{mdim}(K)-1}$, we obtain 
	 	   $$\wt{D}(\link_K v)=
	 	   2^{m_v-\mdim(K)}.$$
	Moreover, if $v$ also satisfies the condition (a) in Theorem~\ref{Thm:Weakly-Taut}, then we have
	$\wt{D}(\link_K v)=2^{m_v-\mdim(\link_K v)-1}$. So 
	 	 $\mdim(\link_K v)= \mdim (K)-1$. This implies that
	 	 $v$ is contained in a simplex of dimension $\mathrm{mdim}(K)$, i.e. $v\in V_{\mdim}(K)$.	 	  
	 \end{proof}
	  	
	  \begin{lem} \label{Lem:Disconnected}
	Let $K$ be a simplicial complex on $[m]$ where $m\geq 2$.	
	Then $K$ is weakly taut and disconnected if and only if  $K$ is isomorphic to the disjoint union $\Delta^{[1]}\sqcup\Delta^{[m-1]}$.
	\end{lem}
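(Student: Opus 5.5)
The plan is to prove the two directions separately, using only formula~\eqref{Equ:D-tilde-K-J} and the lower bound~\eqref{Equ:uto-mdim}.

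\emph{The easy direction.} Let $K=\Delta^{[1]}\sqcup\Delta^{[m-1]}$, so $\mdim(K)=0$. A full subcomplex $K|_J$ has nonzero reduced homology only in two cases:
\begin{itemize}
\item $J=\varnothing$, which contributes $1$;
\item $J$ contains the isolated vertex together with at least one other vertex, giving two contractible components and $\widetilde{tb}=1$. There are $2^{m-1}-1$ such $J$.
\end{itemize}
Every other $J$ gives a simplex. Hence $\widetilde{D}(K)=2^{m-1}=2^{m-\mdim(K)-1}$, so $K$ is weakly taut.

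\emph{The hard direction.} Suppose $K$ is weakly taut and disconnected. Write $K=K_1\sqcup K_2\sqcup\cdots\sqcup K_c$ with $c\ge 2$ components, and $m_\ell=|V(K_\ell)|$. The goal is to show $\widetilde{D}(K)>2^{m-\mdim(K)-1}$ unless $K\cong \Delta^{[1]}\sqcup\Delta^{[m-1]}$.

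The first step is to count the full subcomplexes that are disconnected. A subset $J$ meeting at least two components gives $\widetilde{H}_0(K|_J)\neq 0$. The number of such $J$ is
\[ N=2^m-\sum_\ell (2^{m_\ell}-1)-1. \]
Adding $1$ for $J=\varnothing$ and, using~\eqref{Equ:D-tilde-K-J}, we get
\[ \widetilde{D}(K)\ \ge\ 2^m-\sum_\ell 2^{m_\ell}+c. \]

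The second step is to compare this with $2^{m-\mdim(K)-1}$. If $\mdim(K)\ge 1$, then $2^{m-\mdim(K)-1}\le 2^{m-2}$. On the other hand, $\sum_\ell 2^{m_\ell}\le 2^{m-1}+2^{1}\cdot(c-1)$ roughly by convexity, so $\widetilde{D}(K)\ge 2^{m-1}>2^{m-2}$. This contradicts weak tautness, so $\mdim(K)=0$: $K$ has an isolated vertex $u$, and $\widetilde{D}(K)=2^{m-1}$.

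The third step uses the vertex $u$. Apply Lemma~\ref{Lem-retr}-style counting directly: each $J$ either contains $u$ or not. Each $J\ni u$ with $J\neq\{u\}$ is disconnected and contributes at least $1$, giving at least $2^{m-1}-1$. Adding $J=\varnothing$ already reaches $2^{m-1}$. So equality forces two things:
\begin{itemize}
\item every nonempty $J\subseteq[m]\setminus\{u\}$ has $\widetilde{tb}(K|_J)=0$;
\item every $J\ni u$ has $\widetilde{tb}=1$ exactly.
\end{itemize}
Then $K\setminus u$ has all nonempty full subcomplexes acyclic. In particular it has no minimal non-face: a minimal non-face $J$ would give $K|_J\cong\partial\Delta^J$ with $\widetilde{tb}=1$, including $|J|=2$ via $\widetilde{H}_0$. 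So $K\setminus u\cong\Delta^{[m-1]}$, which is the content of Lemma~\ref{Lem:Simplex}(a) applied to $K\setminus u$, where $\widetilde{D}(K\setminus u)=1$. This yields $K\cong\Delta^{[1]}\sqcup\Delta^{[m-1]}$.

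\emph{Expected obstacle.} The main obstacle is the second step, ruling out $\mdim(K)\ge1$. The convexity estimate must be made precise. Since each $m_\ell\ge1$, one has $\sum_\ell 2^{m_\ell}\le 2^{m-c+1}+2(c-1)$. For $c\ge2$ this gives
\[ 2^m-\sum_\ell 2^{m_\ell}+c\ \ge\ 2^m-2^{m-1}-c+2\ \ge\ 2^{m-1}-c+2. \]
The case $c$ large needs the sharper bound $2^{m-c+1}$. I would first try to show $2^m-2^{m-c+1}-c+2>2^{m-2}$ for all $2\le c\le m$, $m\ge2$, and handle the small cases (for instance $m=2,3$) by hand if the inequality degenerates.
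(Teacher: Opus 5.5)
Your proof is correct, but it takes a different route from the paper's, mainly in how it handles many components. The loose end you flag in the second step is harmless. Since $c\le m$, your crude bound already gives $\widetilde{D}(K)\ge 2^{m-1}-c+2\ge 2^{m-1}-m+2>2^{m-2}$, because $2^{m-2}>m-2$ for $m\ge 2$. More simply, fix vertices $a,b$ in different components: each of the $2^{m-2}$ sets $J\supseteq\{a,b\}$ gives a disconnected $K|_J$. Hence $\widetilde{D}(K)\ge 2^{m-2}+1$, which forces $\mdim(K)=0$ with no convexity estimate at all.

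The paper argues differently in two stages:
\begin{itemize}
\item It first uses the surjectivity criterion of Theorem~\ref{Thm:Weakly-Taut}(b) at a vertex $v\in V_{\mdim}(K)$ to show there are at most two components. Vertices $v',v''$ from two further components give $\link_K v|_{\{v',v''\}}=\{\varnothing\}$, while $(K\setminus v)|_{\{v',v''\}}$ is two points, so surjectivity fails.
\item For two components with $n$ and $m-n$ vertices, a single chain $\widetilde{D}(K)\ge 1+(2^n-1)(2^{m-n}-1)=2^{m-1}+2(2^{n-1}-1)(2^{m-n-1}-1)\ge 2^{m-1}\ge 2^{m-\mdim(K)-1}$ forces equality throughout. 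This yields at once that $\mdim(K)=0$, that both components are simplices, and that one of them is a point.
\end{itemize}

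Your route uses only \eqref{Equ:D-tilde-K-J} and Lemma~\ref{Lem:Simplex}(a) and handles any number of components directly. It is therefore independent of Theorem~\ref{Thm:Binom-Lower-Bound}, Corollary~\ref{Cor:bigrad-Formula} and Theorem~\ref{Thm:Weakly-Taut}, and could sit right after Section~\ref{Sec-D}. The price is a two-stage count: first forcing an isolated vertex, then recounting around it. The paper instead needs one equality analysis, and its reduction to two components is nearly free once Theorem~\ref{Thm:Weakly-Taut} is available.
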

	\begin{proof}
	 It is easy to check that $\Delta^{[1]}\cup\Delta^{[m-1]}$ is weakly taut by definition. Suppose $K$ is weakly taut and disconnected.
	We first prove that $K$ cannot have more than two connected components. Choose a vertex $v\in V_{\mdim}(K)$
	 and let $L$ the component of $K$ containing $v$. If $K$ has more than two connected components, then there are at least two other components denoted by
  $L'$ and $L''$. Choose a vertex $v'$ of $L'$ and 
  a vertex $v''$ of $L''$, respectively and 
  let $J=\{v',v''\}\subseteq [m]\setminus v$. Then by our assumption,
  $$\link_K v|_J = \link_L v|_J = \{\varnothing\}, \ \ 
     (K\setminus v)|_J = \{v'\}\cup \{v''\}.$$
  But by Theorem~\ref{Thm:Weakly-Taut}, we should have
  a surjection $\widetilde{H}_*(\link_K v|_J)\rightarrow \widetilde{H}_*((K\setminus v)|_J)$, which is a contradiction.\n
    
	So suppose $K$ has only two connected components $L$ and $L'$. Assume that
	$$V(L)=[n]=\{v_1,\cdots, v_n\},\ \ V(L')=[m]\setminus [n]=\{v_{n+1},\cdots, v_m\}.$$ 	
	Then by the formula~\eqref{Equ:D-tilde-K-J} for $\wt{D}(K)$, we obtain
	$$\begin{aligned}
		\wt{D}(K)&=\sum_{J\subset[m]} \wt{tb}(K|_J) \\
		 &=  \wt{tb}(K|_{\varnothing})+ \sum_{\varnothing\neq J\subseteq V(L)} \wt{tb}(K|_J) + \sum_{\varnothing\neq J\subseteq V(L')} \wt{tb}(K|_J) + \underset{J\cap V(L')\neq \varnothing}{\sum_{J\cap V(L)\neq\varnothing}}\wt{tb}(K|_J)	\\
		&\geq 1+ \underset{J\cap V(L')\neq \varnothing}{\sum_{J\cap V(L)\neq\varnothing}}\wt{tb}(K|_J) \\
			&\geq 1+\#\{J\subseteq [m] \,;\, J\cap[n]\neq\varnothing,J\cap[m]\setminus[n]\neq\varnothing\} \qquad\qquad \ \, \\
		&= 1+(2^{n}-1)(2^{m-n}-1) \\
		&= 2^{m-1} + 2 (2^{n-1}-1)(2^{m-n-1}-1) \\
		& \geq 2^{m-1}.
	\end{aligned}$$
	
	On the other hand, $\wt{D}(K)=2^{m-\mdim(K)-1}\leq 2^{m-1}$. Hence $\wt{D}(K)=2^{m-1}$ and all the ``$\geq$'' above must be ``$=$''. This implies:
$$\begin{aligned}
	\bullet  \ &\sum_{\varnothing\neq J\subseteq V(L)} \wt{tb}(K|_J) = \sum_{\varnothing\neq J\subseteq V(L')} \wt{tb}(K|_J)=0,\ \text{and so by Lemma~\ref{Lem:Simplex} both
	}\\
	& \text{$L$ and $L'$ are simplices}.\\
	\bullet \ &\text{Either $2^{n-1}-1=0$ or $2^{m-n-1}-1=0$, i.e. either $n=1$ or $m-n=1$}.
	\end{aligned}
	$$
	Therefore, $K$ must be isomorphic to $\Delta^{[1]}\sqcup\Delta^{[m-1]}$.
	\end{proof}
	
	We immediately have the following corollary from Lemma~\ref{Lem:Disconnected}.
	
	\begin{cor} \label{Cor:Disconnected}
	 If a simplicial complex $K$ on $[m]$ is weakly taut where $m\geq 2$, then the following statements are all equivalent:
	 \begin{itemize}
	  \item[(a)] $K$ is disconnected.\n
	  \item[(b)] $\mdim(K)=0$.\n
	  \item[(c)] There exists a vertex $v\in V_{\mdim}(K)$ such that $\link_K(v)=\{\varnothing\}$.\n
	  \item[(d)] $K$ is isomorphic to $\Delta^{[1]}\sqcup\Delta^{[m-1]}$.
	  \end{itemize}
	\end{cor}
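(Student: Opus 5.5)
We will prove the four statements equivalent by showing (a)$\Leftrightarrow$(d), (d)$\Rightarrow$(b), (b)$\Rightarrow$(c) and (c)$\Rightarrow$(a). The equivalence (a)$\Leftrightarrow$(d) is exactly Lemma~\ref{Lem:Disconnected}, since $K$ is weakly taut with $m\geq 2$.

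For (d)$\Rightarrow$(b): in $\Delta^{[1]}\sqcup\Delta^{[m-1]}$ the isolated vertex is a maximal simplex of dimension $0$, so $\mdim(K)=0$.

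For (b)$\Rightarrow$(c): if $\mdim(K)=0$, then some maximal simplex is a single vertex $\{v\}$. By definition $v\in V_{\mdim}(K)$, and since $\{v\}$ is maximal, no face properly contains it, so $\link_K v=\{\varnothing\}$.

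For (c)$\Rightarrow$(a): if $\link_K v=\{\varnothing\}$, then $v$ is an isolated vertex of $K$. Because $m\geq 2$, there is at least one other vertex, and it lies in a different component, so $K$ is disconnected.

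None of these steps presents a real obstacle. All the substance is in Lemma~\ref{Lem:Disconnected}, which is already proved. The only care needed is to use $m\geq 2$ in the last step, so that an isolated vertex genuinely forces disconnectedness.
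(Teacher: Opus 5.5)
Your proposal is correct and matches the paper's approach. The paper derives this corollary directly from Lemma~\ref{Lem:Disconnected} without writing out a proof, and your cycle (a)$\Rightarrow$(d)$\Rightarrow$(b)$\Rightarrow$(c)$\Rightarrow$(a) supplies exactly the elementary implications that this omits, with weak tautness used only in the step taken from that lemma.
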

	
	\n

	\begin{thm}\label{Thm:Subcomplex-Taut}
			Every full subcomplex of a weakly taut simplicial complex is weakly taut.
		\end{thm}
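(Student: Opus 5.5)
The plan is to prove the statement for full subcomplexes obtained by deleting a single vertex, and then iterate. Every full subcomplex $K|_J$ is obtained from $K$ by deleting the vertices of $[m]\setminus J$ one at a time, and $(K\setminus u)|_{J'}=K|_{J'}$ for $J'\subseteq [m]\setminus\{u\}$. So it suffices to show the following: if $K$ on $[m]$ is weakly taut and $u\in[m]$, then $K\setminus u$ is weakly taut. I will prove this by induction on $m$, applying the criterion of Theorem~\ref{Thm:Weakly-Taut} to $K\setminus u$ with a well-chosen vertex $v$.

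\textbf{Small cases.} For $m\le 1$ the claim is trivial, since $K\setminus u=\{\varnothing\}$ is weakly taut by convention. If $\mdim(K)=0$ and $m\geq 2$, then Corollary~\ref{Cor:Disconnected} gives $K\cong\Delta^{[1]}\sqcup\Delta^{[m-1]}$. Deleting a vertex then yields either a simplex or $\Delta^{[1]}\sqcup\Delta^{[m-2]}$, and both are weakly taut by Lemma~\ref{Lem:Simplex}(a) and Lemma~\ref{Lem:Disconnected}.

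\textbf{Choice of $v$ when $\mdim(K)\geq 1$.} A simplex in $\Delta^{max}_{\mdim}(K)$ has $\mdim(K)+1\geq 2$ vertices, so $V_{\mdim}(K)$ contains a vertex $v\neq u$. Fix such a $v$. By the ``only if'' direction of Theorem~\ref{Thm:Weakly-Taut} (see Corollary~\ref{Cor:mdimv-char}), two facts hold for this $v$:
\begin{itemize}
\item[(i)] $\link_K v$ is weakly taut;
\item[(ii)] for every $J\subseteq[m]\setminus\{v\}$, the map $\widetilde H_*(\link_K v|_J)\to\widetilde H_*(K|_J)$ is surjective.
\end{itemize}

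\textbf{Verifying the criterion for $K\setminus u$ at $v$.} First, $\link_{K\setminus u} v=(\link_K v)|_{V(\link_K v)\setminus\{u\}}$. This is either $\link_K v$ itself (if $u\notin\link_K v$) or a full subcomplex of $\link_K v$ on strictly fewer vertices. Since $\link_K v$ has fewer than $m$ vertices, the induction hypothesis together with (i) shows that $\link_{K\setminus u} v$ is weakly taut; this is condition (a). Second, for $J\subseteq[m]\setminus\{u,v\}$ we have
\[
\link_{K\setminus u}v|_J=\link_K v|_J,\qquad \big((K\setminus u)\setminus v\big)|_J=K|_J ,
\]
so the surjectivity required in condition (b) is exactly (ii) restricted to such $J$. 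The ``if'' direction of Theorem~\ref{Thm:Weakly-Taut} then shows that $K\setminus u$ is weakly taut, which closes the induction.

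The only delicate point is ensuring the vertex $v$ can be chosen different from $u$ while still lying in $V_{\mdim}(K)$. Otherwise the link of $v$ in $K\setminus u$ is not controlled by the induction, and the surjectivity in (ii) is not available. This is exactly what forces the separate treatment of $\mdim(K)=0$, where $V_{\mdim}(K)$ may consist of the single vertex $u$. That case is handled by the explicit description in Corollary~\ref{Cor:Disconnected}.
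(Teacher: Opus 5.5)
Your proposal is correct and follows essentially the same route as the paper. Both reduce to deleting a single vertex $u$ and dispose of the disconnected case ($\mdim(K)=0$, via Corollary~\ref{Cor:Disconnected}). Otherwise both pick $v\in V_{\mdim}(K)\setminus\{u\}$ and check the two conditions of Theorem~\ref{Thm:Weakly-Taut} for $K\setminus u$ at $v$, using the induction hypothesis on $\link_K v$ and restricting the surjectivity to $J\subseteq[m]\setminus\{u,v\}$.
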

	\begin{proof}
	 Let $K$ be a weakly taut simplicial complex
	 on $[m]$. The statement is clearly true when $K$ is disconnected (by Lemma~\ref{Lem:Disconnected}) or when $\dim(K)=0$.
	 So we assume that $K$ is connected and $\dim(K)\geq 1$ in the following.\n
	  By induction on the number of vertices of $K$, we only need to prove that $K\setminus v$ is weakly taut for every $v\in V(K)$. Note that since $K$ is connected, 
 we must have $\mdim(K)\geq 1$ by Corollary~\ref{Cor:Disconnected}. So we can take
 a vertex $w\in V(K)\setminus \{v\}$ with $w\in V_{\mdim}(K)$.  By Corollary~\ref{Cor:bigrad-Formula} (d), $\link_K w$ is weakly taut. Then since $\link_K w$ has less vertices
 than $K$,  by our induction its full subcomplex $(\link_K w)\setminus v$ is also weakly taut. In the following, we use Theorem~\ref{Thm:Weakly-Taut} to prove that $K\setminus v$ is weakly taut.
  \begin{itemize}
    \item[(i)] First, by definition $\link_{K\setminus v}w = (\link_K w)\setminus v$. So $\link_{K\setminus v}w$ is weakly taut.
   \n
    \item[(ii)] Second, by Theorem~\ref{Thm:Weakly-Taut} (b)  
 the inclusion $\link_K w|_J \hookrightarrow (K\setminus w)|_J$ induces a surjection $ \widetilde{H}_*(\link_K w|_J)\rightarrow \widetilde{H}_*((K\setminus w)|_J)$ for each $J \subset [m]\setminus w$.
 So in particular for every $J'\subseteq [m]\setminus\{v,w\}=V(K\setminus v)\setminus \{w\}$,
 the inclusion 
 $$\link_{K\setminus v} w|_{J'} = \link_{K} w|_{J'} \hookrightarrow  (K\setminus w)|_{J'}=((K\setminus v)\setminus w)|_{J'}$$
  induces a surjection $ \widetilde{H}_*(\link_{K\setminus v} w|_{J'})\rightarrow \widetilde{H}_*(((K\setminus v)\setminus w)|_{J'})$.
 \end{itemize}
 \n
 So we can assert that $K\setminus v$ is weakly taut
 by applying Theorem~\ref{Thm:Weakly-Taut} to 
 the vertex $w\in V(K\setminus v)$. The theorem is proved. 
\end{proof}

\begin{rem}
  It is possible that a simplicial complex $K$ is not weakly taut while all its proper full subcomplexes are weakly taut, see Figure~\ref{p:Example-2} for example.
\end{rem}	

 \begin{figure}[h]
  \includegraphics[width=0.23\textwidth]{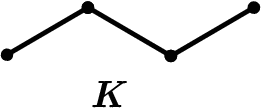}\\
          \caption{All proper full subcomplexes of $K$ are weakly taut, but $K$ is not.} \label{p:Example-2}
      \end{figure}
 
  \begin{cor} \label{Cor:two-vertex}
	Let $K$ be a weakly taut simplicial complex. For any vertex $v$ of $K$, if $w \in V_{\mathrm{mdim}}(K) \backslash v$, then $ w \in V_{\mathrm{mdim}}(K \backslash v)$.
	 \end{cor}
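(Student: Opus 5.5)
The plan is to recycle the argument in the proof of Theorem~\ref{Thm:Subcomplex-Taut} and then close it off with Corollary~\ref{Cor:mdimv-char}, applied to the complex $K\setminus v$ instead of $K$. Let $K$ be weakly taut on $[m]$, let $v$ be a vertex, and let $w\in V_{\mdim}(K)$ with $w\neq v$. Then $w$ is a vertex of $K\setminus v$. By Theorem~\ref{Thm:Subcomplex-Taut}, $K\setminus v$ is weakly taut. So by Corollary~\ref{Cor:mdimv-char} it suffices to check that $w$ satisfies conditions (a) and (b) of Theorem~\ref{Thm:Weakly-Taut} with respect to $K\setminus v$.

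For condition (a), Corollary~\ref{Cor:bigrad-Formula}~(d) tells us that $\link_K w$ is weakly taut, because $w\in V_{\mdim}(K)$. We then use the identity $\link_{K\setminus v} w=(\link_K w)\setminus v$. If $v\notin V(\link_K w)$, this is just $\link_K w$. Otherwise it is a full subcomplex of $\link_K w$. In either case it is weakly taut by Theorem~\ref{Thm:Subcomplex-Taut}.

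For condition (b), we apply the ``only if'' direction of Corollary~\ref{Cor:mdimv-char} (equivalently, Theorem~\ref{Thm:Weakly-Taut}) to $K$ and $w$. This gives that for every $J\subseteq[m]\setminus\{w\}$, the inclusion $\link_K w|_J\hookrightarrow (K\setminus w)|_J$ is surjective on $\widetilde H_*$. Restricting to subsets $J'\subseteq[m]\setminus\{v,w\}$, we have the identifications
\[
\link_{K\setminus v}w|_{J'}=\link_K w|_{J'},\qquad ((K\setminus v)\setminus w)|_{J'}=(K\setminus w)|_{J'}.
\]
So the required surjectivity holds for $K\setminus v$ and $w$. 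Corollary~\ref{Cor:mdimv-char} then yields $w\in V_{\mdim}(K\setminus v)$.

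The step needing the most care is making sure nothing above relied on the connectivity assumption $\dim(K)\geq 1$ made in the proof of Theorem~\ref{Thm:Subcomplex-Taut}. As I read it, steps (i) and (ii) there only use that $w\in V_{\mdim}(K)$, so they go through in general. As a safety check, I would still verify the disconnected case directly via Lemma~\ref{Lem:Disconnected}. In that case $K\cong\Delta^{[1]}\sqcup\Delta^{[m-1]}$ and $\mdim(K)=0$, so $V_{\mdim}(K)$ consists of the isolated vertex, or of both vertices when $m=2$. Then $K\setminus v$ is either $\Delta^{[1]}\sqcup\Delta^{[m-2]}$ or a single vertex. In both cases $w$ is still a $0$-dimensional maximal simplex of $K\setminus v$, hence lies in $V_{\mdim}(K\setminus v)$.
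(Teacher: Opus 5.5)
Your proposal is correct and matches the paper's proof: the paper also takes conclusions (i) and (ii) from the proof of Theorem~\ref{Thm:Subcomplex-Taut}, which as you note rely only on $w\in V_{\mdim}(K)$. It then applies Corollary~\ref{Cor:mdimv-char} to $w$ and the weakly taut complex $K\setminus v$. Your separate check of the disconnected case is harmless but not needed.
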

	 \begin{proof}
	  By our assumption, we can deduce the conclusions (i) and (ii) in middle of the proof of Theorem~\ref{Thm:Subcomplex-Taut}.
	  Then by applying
	   Corollary~\ref{Cor:mdimv-char} to the vertex $w$ and
	   $K\backslash v$, we can assert that $ w \in V_{\mathrm{mdim}}(K \backslash v)$.
	 \end{proof}
	 
The following theorem tells us more about the structure
of a weakly taut simplicial complex $K$ at a vertex
$v\in V_{\mdim}(K)$.

	\begin{thm}\label{Thm:K-v-Join}
		Suppose $K$ is a weakly taut simplicial complex.
		For a vertex $v\in V_{\mdim}(K)$, let $r_v= |V(K)| -  |V(\star_Kv)|$. Then 
		$$K\setminus v \cong K|_{V(\link_Kv)}*\Delta^{[r_v]}.$$
		\end{thm}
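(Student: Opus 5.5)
The idea is to pin down the minimal non-faces of $K\setminus v$ and show that none of them meets the vertices outside the star of $v$. Set $N=V(\link_K v)$ and $R=V(K)\setminus(N\cup\{v\})$. Then $R$ consists exactly of the vertices of $K$ not in $\star_K v$, so $|R|=r_v$. A simplicial complex is determined by its vertex set together with its minimal non-faces: a subset is a face if and only if it contains no minimal non-face. So it suffices to prove the following claim.

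\emph{Claim.} Every minimal non-face of $K\setminus v$ is contained in $N$.

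Granting the claim, the minimal non-faces of $K\setminus v$ are exactly the minimal non-faces of the full subcomplex $K|_N$. It follows that a subset $\tau\cup\rho$ with $\tau\subseteq N$ and $\rho\subseteq R$ is a face of $K\setminus v$ if and only if $\tau\in K|_N$. This is precisely the statement $K\setminus v = K|_N*\Delta^{R}\cong K|_{V(\link_K v)}*\Delta^{[r_v]}$.

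To prove the claim, I will use Theorem~\ref{Thm:Weakly-Taut}(b), which holds for $v$ because $v\in V_{\mdim}(K)$ and $K$ is weakly taut (this is the ``$\Rightarrow$'' part of its proof). It says that for every $J\subseteq[m]\setminus\{v\}$ the map $\widetilde H_*(\link_K v|_J)\to\widetilde H_*(K|_J)$ is surjective.

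Suppose, for contradiction, that $J$ is a minimal non-face of $K\setminus v$ with $J\cap R\neq\varnothing$. Then $K|_J\cong\partial\Delta^J$, so $\widetilde H_{|J|-2}(K|_J)\neq 0$. On the other side, $\link_K v|_J=\link_K v|_{J\cap N}$ is a complex on at most $|J|-1$ vertices. I will check that its $(|J|-2)$-dimensional reduced homology vanishes, which contradicts surjectivity. There are two cases.

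\begin{itemize}
\item If $J\cap N=\varnothing$, the restricted link is $\{\varnothing\}$, whose only nonzero reduced homology sits in degree $-1$. For this to match degree $|J|-2$ we would need $|J|=1$. But a singleton is never a non-face, since every element of $[m]$ is a vertex of $K$.
\item If $J\cap N\neq\varnothing$, the restricted link has at most $|J|-1$ vertices, so its dimension is at most $|J|-2$. If the dimension is smaller, $\widetilde H_{|J|-2}$ vanishes trivially. If the dimension equals $|J|-2$, the complex must be the full simplex on $|J|-1$ vertices, which is contractible.
\end{itemize}

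In either case $\widetilde H_{|J|-2}(\link_K v|_J)=0$, giving the contradiction and proving the claim.

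The main point requiring care is the degree bookkeeping in this homology argument, in particular the edge cases $J\cap N=\varnothing$ and $|J|$ small, where the convention $\widetilde\beta_{-1}(\{\varnothing\})=1$ matters. Everything else is formal once Theorem~\ref{Thm:Weakly-Taut}(b) is invoked.
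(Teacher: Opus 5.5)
Your proof is correct. It uses the same key input as the paper: the surjectivity of $\widetilde H_*(\link_K v|_J)\to\widetilde H_*(K|_J)$ from Theorem~\ref{Thm:Weakly-Taut}(b) at $v\in V_{\mdim}(K)$, contradicted by a full subcomplex of $K\setminus v$ that is (homotopy equivalent to) a sphere. You organize the argument differently, however.

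The paper runs two separate inductions.
\begin{itemize}
\item Claim-1 shows, by induction on $|J|$, that $K|_J$ is a simplex for every $J\subseteq R$. This is your case $J\cap N=\varnothing$.
\item Claim-2 shows, by induction on $|\sigma|$, that $\sigma\cup R$ is a face for every face $\sigma$ of $K|_N$. The contradiction comes from $\link_K v|_J$ sitting inside the simplex $(K\setminus v)|_{V(\sigma)}$, so the induced map factors through a contractible complex. In your case $J\cap N\neq\varnothing$ you instead use a vertex count to get $\widetilde H_{|J|-2}(\link_K v|_J)=0$.
\end{itemize}

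Your reduction to ``every minimal non-face of $K\setminus v$ lies in $N$'' handles both claims at once and needs no induction. It also makes the sphere automatic. In Claim-2 the paper takes $J=R\cup V(\sigma)$ and asserts $(K\setminus v)|_J=\sigma\cup(\partial\sigma*\Delta^{[r_v]})$. That tacitly requires $\sigma\cup\rho\notin K$ for every nonempty $\rho\subseteq R$, which the induction hypothesis does not supply when $r_v\geq 2$. The natural repair is to shrink $R$ to a minimal $\rho$ with $\sigma\cup\rho\notin K$, and that is exactly your choice of $J$ as a minimal non-face.

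In return, the paper's route builds the join structure face by face, exhibiting the faces $\sigma*\Delta^{[r_v]}$ directly. Yours passes through the description of a complex by its minimal non-faces.
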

\begin{proof}
 \textbf{Claim-1:} For each $J\subset V(K)\setminus V(\star_K v)$, $(K\setminus v)|_J$ is a simplex. \n
 
 We prove this by induction on the cardinality $|J|$.
 When $|J|=1$, $(K\setminus v)|_J$ is a vertex. Assume that the claim is true when $|J|\leq k-1$. Take an arbitrary $J\subset V(K)\setminus V(\star_K v)$ with $|J|=k$. By our induction hypothesis, $(K\setminus v)|_{J'}$ is a simplex for any $J'\subsetneq J$. So if the claim is not true for $J$, we would have
  $(K\setminus v)|_J\cong\partial\Delta^{[k]}$ and $\link_K v|_J =\{\varnothing\}$. But this contradicts the surjectivity of the map $\wt{H}_*(\link_Kv|_J)\rightarrow \wt{H}_*((K\setminus v)|_J)$ (see Theorem~\ref{Thm:Weakly-Taut} (b)). This proves Claim-1.
 \n
 
 By Claim-1, $K|_{V(K)\setminus V(\star_K v)}$ is a simplex.
 For brevity, we denoted it by $\Delta^{[r_v]}$.\n
 
 \textbf{Claim-2:} For every simplex $\sigma$ of $K|_{V(\link_Kv)}$, $\sigma * \Delta^{[r_v]}$ is a simplex of $K\setminus v$. \n
 
 Note that $\link_K v$ may not be a full subcomplex of $K$
 (see Figure~\ref{p:Example-1} for example). It is well possible that $K|_{V(\link_Kv)}$ has more simplices than $\link_K v$. \n
 
 We prove this claim by induction on the cardinality $|\sigma|$ of $\sigma$. 
 If $|\sigma|=0$, this is true since $\Delta^{[r_v]}$ is a simplex of $K\setminus v$. Assume that the statement holds when $|\sigma|<k$. Let $\sigma$ be a simplex of $K|_{V(\link_Kv)}$ with $|\sigma|=k\geq 1$. 
 If $\sigma*\Delta^{[r_v]}$ is not a simplex of $K\setminus v$, let 
 $J=(V(K)\setminus V(\star_K v))\cup V(\sigma)$. Then
  by our induction hypothesis,\n
 \begin{itemize}
 \item $(K\setminus v)|_{J}=\sigma\cup (\partial\sigma*\Delta^{[r_v]})$
  which is homotopy equivalent to $\partial \Delta^{[k+1]}$.
  \n
  \item $(K\setminus v)|_{V(\sigma)} = \sigma \cong \Delta^{[k]}$.\n
  
  \item $\link_K v|_J = \link_K v |_{V(\sigma)}  \subseteq (K\setminus v)|_{V(\sigma)} \subseteq (K\setminus v)|_J $.\n
  \end{itemize}
  Then the following composition of maps 
 $$\wt{H}_*(\link_K v|_J)\rightarrow \wt{H}_*\big((K\setminus v)|_{V(\sigma)} \big)\rightarrow \wt{H}_*((K\setminus v)|_J)\cong \wt{H}_*(\partial \Delta^{[k+1]}) $$ 
 cannot be surjective since the second map is not. But this contradicts our assumption $v\in V_{\mdim}(K)$ because of Theorem~\ref{Thm:Weakly-Taut} (b). So $\sigma*\Delta^{[r_v]}$ has to be a simplex of $K\setminus v$, which proves Claim-2. 
  \n
 
 It follows immediately from Claim-2 that $K\setminus v\cong K|_{V(\link_Kv)}*\Delta^{[r_v]}$. 
 \end{proof}
 
 According to Theorem~\ref{Thm:K-v-Join}, we can decompose a weakly taut simplicial complex $K$ at any vertex $v\in V_{\mdim}(K)$ as:
 \begin{align} \label{Equ:Decomposition}
	  K &=\star_K v\cup_{\link_K v} (K\setminus v) \\
	    &\cong (v*\link_K v)\cup_{\link_K v}
	     \big(K|_{V(\link_Kv)}*\Delta^{[r_v]}\big). \notag
	 \end{align}
	 A schematic  picture of this decomposition is shown in Figure~\ref{p:Global-Picture}. Moreover, by Corollary~\ref{Cor:bigrad-Formula} (d), $\link_K v$ is weakly taut. By Theorem~\ref{Thm:Subcomplex-Taut}, $K\setminus v$ is weakly taut, which implies that $K|_{V(\link_K v)}$ is also weakly taut (by Lemma~\ref{Lem:Weak-Taut-Join}). 
	 	Note that when $\link_K v =\{\varnothing\}$, $K$ is the disjoint union $\{v\}\sqcup \Delta^{[r_v]}$ (see Corollary~\ref{Cor:Disconnected}). 
	
  \begin{figure}[h]
         \includegraphics[width=0.53\textwidth]{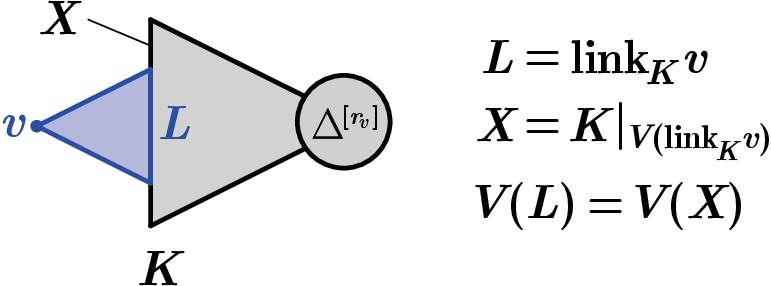}\\
          \caption{A schematic  picture of a weakly taut simplicial complex $K$ at a vertex $v\in V_{\mdim}(K)$}\label{p:Global-Picture}
      \end{figure}
  
\begin{rem}
  In Figure~\ref{p:Global-Picture}, if $L=X$, then
  $K = \begin{cases}
    \text{Cone}(X),  &  \text{if $r=0$}; \\
     \text{Susp}(X),  &  \text{\text{if $r=1$}}.
 \end{cases}$

 Here $\text{Susp}(X)$ denotes the \emph{suspension} of $X$. So the decomposition of $K$ in~\eqref{Equ:Decomposition} can be thought of as a generalization of taking cone or suspension of a space.
\end{rem}

   As we will see below, the decomposition~\eqref{Equ:Decomposition} is very useful for us to study the global structure of a weakly taut simplicial complex.    
Next, we prove some properties of $\dim(K)$ and 
$\mdim(K)$ for a weakly taut simplicial complex $K$.

\begin{lem}\label{Lem:mdim-link-v}
  Suppose $K$ is a weakly taut simplicial complex.
	Then for every vertex $v\in V_{\mdim}(K)$, we have\n
	\begin{itemize}
	\item[(a)] $\mdim(\link_K v) \leq \mdim(K\setminus v)$, and\n
	
	\item[(b)] $\mdim(\link_K v) = \mdim(K\setminus v)$ if and only if $\link_K v=K\setminus v$.
	\end{itemize}   	
 \end{lem}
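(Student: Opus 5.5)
Throughout, let $K$ be weakly taut on $[m]$ and fix $v\in V_{\mdim}(K)$. By Corollary~\ref{Cor:bigrad-Formula} and its proof, $\link_K v$ is weakly taut and $\mdim(\link_K v)=\mdim(K)-1$. The plan is to compare the maximal simplices of $K\setminus v$ with those of $K$ for (a). For (b), the idea is to turn equality of the two $\mdim$'s into an equality of $\widetilde{D}$'s, and then use Hochster's formula~\eqref{Equ:D-tilde-K-J}.

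\emph{Part (a).} Let $\tau$ be a maximal simplex of $K\setminus v$. There are two cases.
\begin{itemize}
\item If $\tau\in\link_K v$, then $\tau\cup\{v\}$ is a maximal simplex of $K$. Indeed, any simplex $\sigma\supseteq\tau\cup\{v\}$ of $K$ gives a simplex $\sigma\setminus\{v\}\supseteq\tau$ of $K\setminus v$, which forces $\sigma=\tau\cup\{v\}$. Hence $\dim\tau\geq\mdim(K)-1=\mdim(\link_K v)$.
\item If $\tau\notin\link_K v$, then $\tau\cup\{v\}\notin K$. So $\tau$ is already maximal in $K$, and $\dim\tau\geq\mdim(K)>\mdim(\link_K v)$.
\end{itemize}
Taking the minimum over all $\tau$ gives $\mdim(K\setminus v)\geq\mdim(\link_K v)$.

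\emph{Part (b).} The direction ``$\Leftarrow$'' is trivial. For ``$\Rightarrow$'', assume $\mdim(K\setminus v)=\mdim(\link_K v)$.

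First, I show that every vertex of $K\setminus v$ lies in $\link_K v$. By the case analysis in (a), a maximal simplex $\tau$ of $K\setminus v$ of minimal dimension must lie in $\link_K v$, so $V(\tau)\subseteq V(\link_K v)$. By Theorem~\ref{Thm:K-v-Join}, $K\setminus v\cong K|_{V(\link_K v)}*\Delta^{[r_v]}$, so every maximal simplex of $K\setminus v$ contains the $r_v$ vertices of $V(K)\setminus V(\star_K v)$. Hence $r_v=0$ and $m_v=m-1$.

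Next, I compute the $\widetilde{D}$'s. Corollary~\ref{Cor:bigrad-Formula}(c) with $m_v=m-1$ gives $\widetilde{D}(K)=\widetilde{D}(\link_K v)=2^{m-2-\mdim(\link_K v)}$. By Theorem~\ref{Thm:Subcomplex-Taut}, $K\setminus v$ is weakly taut on $m-1$ vertices, so $\widetilde{D}(K\setminus v)=2^{m-2-\mdim(K\setminus v)}$. By assumption this equals $\widetilde{D}(K)$. Feeding this into~\eqref{Equ:D-tilde-K-J} yields
\[\sum_{J\ni v}\widetilde{tb}(K|_J)=0.\]

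Finally, this forces $K\setminus v\subseteq\link_K v$ (the reverse inclusion is automatic). Suppose some simplex of $K\setminus v$ is not in $\link_K v$, and pick a minimal such $\sigma$. Then $\sigma\neq\varnothing$, since $v$ is a vertex. Every proper subset of $\sigma\cup\{v\}$ is a face of $K$: the set $\sigma$ itself is a face, and $\sigma'\cup\{v\}$ is a face for $\sigma'\subsetneq\sigma$ by minimality. So $\sigma\cup\{v\}$ is a minimal non-face of $K$. Then $K|_{\sigma\cup\{v\}}\cong\partial\Delta^{\sigma\cup\{v\}}$ has $\widetilde{tb}=1$, contradicting the vanishing sum above. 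Hence $\link_K v=K\setminus v$.

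The main obstacle is the final step of (b): the vertex-level conclusion $r_v=0$ does not by itself give equality of complexes. The $\widetilde{D}$-bookkeeping combined with the minimal non-face argument is what closes that gap.
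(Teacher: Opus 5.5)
Your proof is correct, and it uses weak tautness in different places than the paper does.

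For (a), the paper argues homologically. It sums the surjections of Theorem~\ref{Thm:Weakly-Taut}(b) over all $J$ via Hochster's formula, which bounds $\widetilde{D}(K\setminus v)$ by the corresponding sum for $\link_K v$. Weak tautness of both complexes then turns this into the inequality of $\mdim$'s. Your case analysis of the maximal simplices of $K\setminus v$ is purely combinatorial, and it shows that (a) holds for any complex $K$ and any $v\in V_{\mdim}(K)$.

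For (b), both arguments first obtain $r_v=0$ and then take a minimal $\sigma\in(K\setminus v)\setminus\link_K v$, but they locate the contradiction differently.
\begin{itemize}
\item The paper uses $\widetilde{D}(\link_K v)=\widetilde{D}(K\setminus v)$ to upgrade the surjections to isomorphisms on every full subcomplex. It then compares $\link_K v|_{V(\sigma)}=\partial\sigma$ with $(K\setminus v)|_{V(\sigma)}=\sigma$.
\item You use the splitting $\widetilde{D}(K)=\widetilde{D}(K\setminus v)+\sum_{J\ni v}\widetilde{tb}(K|_J)$, and observe that $\sigma\cup\{v\}$ is a minimal non-face of $K$ containing $v$.
\end{itemize}
Your route never needs Theorem~\ref{Thm:Weakly-Taut}(b); it uses only the weak tautness of $K$ and of $K\setminus v$. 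The paper's route fits its recurring surjectivity framework and yields the stronger intermediate fact that $\link_K v\hookrightarrow K\setminus v$ is a homology isomorphism on all full subcomplexes.

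Your first step in (b) is in fact unnecessary. The identity $\widetilde{D}(K)=2^{m-\mdim(K)-1}=2^{m-2-\mdim(\link_K v)}$ follows directly from $\mdim(\link_K v)=\mdim(K)-1$, without $r_v=0$ or Corollary~\ref{Cor:bigrad-Formula}(c). Combine this with $\widetilde{D}(K\setminus v)=2^{m-2-\mdim(K\setminus v)}$ and Lemma~\ref{Lem-retr}. That single computation reproves (a). It also identifies the equality case as exactly the one where no minimal non-face contains $v$, that is, where $K=v*(K\setminus v)$.
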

 \begin{proof}
 (a) Suppose $V(K)=[m]$. First, we have $\mdim(\link_K v) = \mdim(K)-1$ since $v\in V_{\mdim}(K)$. By Corollary~\ref{Cor:bigrad-Formula} (d) and Theorem~\ref{Lem:Disconnected}, $\link_K v$ and $K\setminus v$ are both weakly taut. So we have
 \begin{equation} \label{Equ:Y-L}
     \widetilde{D}(\link_K v)= 2^{m-\mdim(\link_K v)-1}, \ \ \widetilde{D}(K\setminus v)= 2^{m-\mdim(K\setminus v)-1}.
   \end{equation}
 Moreover, from Theorem~\ref{Thm:Weakly-Taut} (b) and the Hochster's formula, we can deduce that $\widetilde{D}(\link_K v)\geq \widetilde{D}(K\setminus v)$. Then by~\eqref{Equ:Y-L}, $ \mdim(\link_K v) \leq \mdim(K\setminus v)$.
  \n 
 (b) By Theorem~\ref{Thm:K-v-Join}, we can write $K$ as 
  $$K=(\star_K v)\cup_{\link_K v} (K\setminus v) = (v*\link_K v)\cup_{\link_K v} \big(K|_{V(\link_K v)}*\Delta^{[r_v]}\big).$$
  
  If $r_v\geq 1$, we have $\Delta^{max}(K) = \Delta^{max}(v*\link_K v) \cup \Delta^{max}(K\setminus v)$. Then by the definition of $\mdim(K)$, we obtain $\mdim(K)\leq \mdim(K\setminus v)$ and so
	$$\mdim(\link_K v) \leq \mdim(K\setminus v)-1.$$
	Therefore, if $\mdim(\link_K v)=\mdim(K\setminus v)$, it is necessary
	that $r_v=0$ and hence $V(K\setminus v)=V(\link_K v)$. In this case, we have $\widetilde{D}(\link_K v)= \widetilde{D}(K\setminus v)$ by~\eqref{Equ:Y-L}. So by Theorem~\ref{Thm:Weakly-Taut} (b),  the inclusion $i:\link_K v \hookrightarrow K\setminus v$ must induce an isomorphism $i_*:\widetilde{H}_*(\link_K v|_J)\rightarrow \widetilde{H}_*((K\setminus v)|_J)$ for every $J\subseteq [m]$.
 If $\link_K v\neq K\setminus v$, we choose a minimal (by inclusion)
  simplex $\sigma$ in  $(K\setminus v)\setminus \link_K v$. Since $V(K\setminus v)=V(\link_K v)$,
 $\dim(\sigma)\geq 1$. Then
  $\link_K v|_{V(\sigma)}=\partial \sigma$ is a simplicial sphere while $(K\setminus v)|_{V(\sigma)}=\sigma$. This contradicts that
  $i_*:\widetilde{H}_*(\link_K v|_{V(\sigma)})\rightarrow \widetilde{H}_*((K\setminus v)|_{V(\sigma)})$ is an isomorphism.
 Therefore, we must have $\link_K v=K\setminus v$ if $\mdim(\link_K v)=\mdim(K\setminus v)$. 
 \end{proof}
 \n

\begin{lem}\label{Lem:Decomp-Quotient}
For a weakly taut simplicial complex $K$ and a vertex $v\in V_{\mdim}(K)$,
	let $X=K|_{V(\link_K v)}$ and $L=\link_K v$.  Then  we have either $\|K\| \simeq  \|X\| \slash \|L\|$ or $\|K\| \simeq 
	\mathrm{Susp}(\|L\|)$. 
\end{lem}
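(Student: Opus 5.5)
Use the decomposition~\eqref{Equ:Decomposition} from Theorem~\ref{Thm:K-v-Join}:
\[ K = (v*L)\cup_{L} (X*\Delta^{[r_v]}), \]
where $L=\link_K v\subseteq X=K|_{V(\link_K v)}$. The cone $v*L$ is contractible. Hence $\|K\|$ is homotopy equivalent to the quotient $\|X*\Delta^{[r_v]}\|/\|L\|$, by collapsing the contractible subcomplex $v*L$; equivalently, $\|K\|$ is the mapping cone of the inclusion $L\hookrightarrow X*\Delta^{[r_v]}$. The argument then splits according to whether $r_v=0$ or $r_v\geq 1$.

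\textbf{Case $r_v=0$.} Here $K\setminus v = X$, so $K = (v*L)\cup_L X$. Since $(X,L)$ is a CW pair and $v*L$ is the cone on $L$, attaching the cone and collapsing it gives $\|K\|\simeq \|X\|/\|L\|$.

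\textbf{Case $r_v\geq 1$.} Now $X*\Delta^{[r_v]}$ is a cone, hence contractible, and it contains $L$. So $K$ is the union of two contractible subcomplexes $v*L$ and $X*\Delta^{[r_v]}$ whose intersection is exactly $L$, because $\star_K v\cap(K\setminus v)=\link_K v$. A union of two contractible CW subcomplexes meeting in $L$ is homotopy equivalent to the unreduced suspension of $L$: collapse each contractible piece to a point, using the homotopy extension property on CW pairs. Thus $\|K\|\simeq \mathrm{Susp}(\|L\|)$.

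The main thing to check is the intersection claim in the second case. We need $(v*L)\cap (X*\Delta^{[r_v]}) = L$ inside $K$. This holds because any simplex containing $v$ lies in $\star_K v$, and $\star_K v\cap (K\setminus v)=\link_K v$. We also need $L\subseteq X*\Delta^{[r_v]}$, which holds since $L\subseteq X$. The degenerate case $L=\{\varnothing\}$ is handled by Corollary~\ref{Cor:Disconnected}. There $K=\{v\}\sqcup\Delta^{[r_v]}$, which is $S^0=\mathrm{Susp}(\varnothing)$ when $r_v\ge1$. When $r_v=0$ it is the single point $v$, which agrees with $X/L$ under the convention that $\|X\|/\varnothing$ means adding a disjoint basepoint and $X$ is empty. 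I expect these conventions to need only brief comment.
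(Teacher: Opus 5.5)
Your proposal is correct and follows the same route as the paper. Both use the decomposition $K=(v*L)\cup_L(X*\Delta^{[r_v]})$ and split on $r_v$: for $r_v=0$, $K$ is the mapping cone of $L\hookrightarrow X$, giving $\|X\|/\|L\|$; for $r_v>0$, $K$ is two contractible pieces glued along $L$, giving $\mathrm{Susp}(\|L\|)$. Your explicit checks that $\star_K v\cap(K\setminus v)=L$ and of the degenerate case $L=\{\varnothing\}$ are details the paper leaves implicit.
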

\begin{proof}
 By~\eqref{Equ:Decomposition},  $K = (v*L)\cup_L (X*\Delta^{[r_v]})$.
	  Then we obtain
	 \begin{align*}
	 \|K\| &= (v*\|L\|) \cup_{\|L\|} \big(\|X*\Delta^{[r_v]}\|\big) \\
	 &\simeq \begin{cases}
   (v*\|L\|) \cup_{\|L\|} \big(v'*\|X\|\big)\simeq
   \text{Susp}(\|L\|),  &  \text{if $r_v>0$}; \\
  (v*\|L\|) \cup_{\|L\|} \|X\| \simeq \|X\|\slash \|L\|,  &  \text{if $r_v=0$}.
 \end{cases}  
	 \end{align*}
\end{proof}

\begin{thm}\label{Thm:Homotopy-Type}
 Let $K$ be a weakly taut simplicial complex over a field $\mathbb{F}$. Then its geometric realization $\|K\|$ is homotopy equivalent either to a point or to a sphere of dimension $\mdim(K)$.
\end{thm}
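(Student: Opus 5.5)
We argue by induction on the number of vertices $m=|V(K)|$, using the decomposition \eqref{Equ:Decomposition} at a vertex $v\in V_{\mdim}(K)$ together with Lemma~\ref{Lem:Decomp-Quotient}. The base cases are immediate: $\{\varnothing\}$ is the $(-1)$-sphere, a simplex is contractible, and if $K$ is disconnected then by Lemma~\ref{Lem:Disconnected} $K\cong\Delta^{[1]}\sqcup\Delta^{[m-1]}\simeq S^0$ with $\mdim(K)=0$. So assume $K$ is connected with $\mdim(K)\geq 1$, pick $v\in V_{\mdim}(K)$, and put $L=\link_K v$ and $X=K|_{V(L)}$. By Corollary~\ref{Cor:bigrad-Formula}(d) $L$ is weakly taut, and by Theorem~\ref{Thm:Subcomplex-Taut} $X$ is weakly taut. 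Both have fewer vertices than $K$, so by induction each is contractible or a sphere; moreover the sphere for $L$ has dimension $\mdim(L)=\mdim(K)-1$.

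\textbf{Case $r_v>0$.} Lemma~\ref{Lem:Decomp-Quotient} gives $\|K\|\simeq\mathrm{Susp}(\|L\|)$. This is contractible if $\|L\|$ is, and is $S^{\mdim(K)}$ if $\|L\|\simeq S^{\mdim(K)-1}$. This case is finished.

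\textbf{Case $r_v=0$.} Here $\|K\|\simeq\|X\|/\|L\|$, i.e.\ $\|K\|$ is the homotopy cofiber of $i:\|L\|\hookrightarrow\|X\|$. The key input is Theorem~\ref{Thm:Weakly-Taut}(b) with $J=V(L)$: the map $i_*:\widetilde{H}_*(L)\to\widetilde{H}_*(X)$ is surjective. If $X$ is contractible, the cofiber is $\mathrm{Susp}(\|L\|)$, which is a point or $S^{\mdim(K)}$. If $X\simeq S^p$, surjectivity forces $\widetilde H_*(L)\neq0$, so $L\simeq S^{q}$ with $q=\mdim(K)-1$ and $q=p$, and $i_*$ is an isomorphism on reduced homology (both ranks are one over $\mathbb{F}$). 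To pass from homology to homotopy, I would note that for $p\geq 2$ both spaces are simply connected spheres, so an $\mathbb{F}$-homology isomorphism between them is, by Hurewicz, a degree-$d$ map with $d\neq0$ in $\mathbb{F}$. This alone is not enough, since we need $d=\pm1$. Instead I would exclude this subcase altogether with a counting argument. By Lemma~\ref{Lem:mdim-link-v} we have $\mdim(L)\leq\mdim(X)$. If $\widetilde D(L)=\widetilde D(X)$, Lemma~\ref{Lem:mdim-link-v}(b) gives $L=X$, and then $K$ is a cone, which is contractible. Otherwise $\mdim(L)<\mdim(X)$; the aim would then be to show that $L$ and $X$ cannot both be spheres of the same dimension. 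The route is that $\widetilde H$ of the top-dimensional piece is detected at the full vertex set via Hochster, so $X\simeq S^p$ would imply $p=\mdim(X)$ by the inductive statement. That gives $p=\mdim(X)>\mdim(L)=q$, contradicting $p=q$. Hence whenever $X$ is a sphere, $L$ must be a sphere of strictly smaller dimension, and surjectivity of $i_*$ then fails unless $\widetilde H_*(X)=0$. So in fact $X$ is contractible and $\|K\|\simeq\mathrm{Susp}\|L\|$.

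The induction hypothesis must therefore carry the dimension claim, namely that a non-contractible weakly taut complex is homotopy equivalent to $S^{\mdim}$; this is what makes the comparison $p=\mdim(X)>\mdim(L)=q$ available. The main obstacle is the $r_v=0$ case with $X$ a sphere, and the plan is to rule it out by the $\mdim$ comparison in Lemma~\ref{Lem:mdim-link-v} rather than to analyse the degree of $i$. A remaining subtlety is that contractibility is deduced only from vanishing $\mathbb{F}$-homology. I would make the induction purely homotopical by always reducing to a suspension or a cone of an inductively known space, so that no homology-to-homotopy step is needed.
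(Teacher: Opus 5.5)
Your argument is correct and is essentially the paper's proof. It uses the same induction at a vertex $v\in V_{\mdim}(K)$. When $r_v>0$ or $X$ is contractible, it reaches the suspension conclusion via Lemma~\ref{Lem:Decomp-Quotient}. When $r_v=0$ and $X$ is a sphere, surjectivity of $i_*$ forces $\mdim(L)=\mdim(X)$, so $L=X$ by Lemma~\ref{Lem:mdim-link-v}(b) and $K$ is a cone.

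You only split on whether $\mdim(L)=\mdim(X)$ before invoking surjectivity rather than after. The detour through degrees of maps is unnecessary. So is the worry about deducing contractibility from $\mathbb{F}$-homology, since contractibility of $X$ comes directly from the inductive homotopy statement.
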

\begin{proof}
  We do induction on the number of vertices.
  The one vertex case is trivial.  Suppose $K$ is a simplicial complex on $[m]$ and we choose a vertex $v\in V_{\mdim}(K)$. Then
  by~\eqref{Equ:Decomposition}, we can write $K = (v*L)\cup_L (X*\Delta^{[r_v]})$ where  
 $L=\link_K v$ and $X=K|_{V(\link_K v)}$ are both weakly taut subcomplexes of $K$ with $|V(X)|<m$ and $|V(L)|<m$.
 So by induction, both $\|X\|$ and $\|L\|$  are 
  homotopy equivalent either to a point or to a sphere
  of dimension $\mdim(X)$ and $\mdim(L)$, respectively.
   In addition,  since $v\in V_{\mdim}(K)$, we have $\mdim(K)=\mdim(L)+1$. Moreover,
  according to Theorem~\ref{Thm:Weakly-Taut} (b), the inclusion
 $i: L\hookrightarrow K\setminus v$ induces a surjection  $i_*: H_*(L|_J;\mathbb{F})\rightarrow H_*((K\setminus v)|_J;\mathbb{F})$ for any $J\subseteq [m]\setminus v$. In particular, by setting $J=V(\link_K v)$, we have a surjection
  \begin{equation} \label{Equ:Surj-L-X}
   i_*: H_*(L;\mathbb{F})\rightarrow H_*(X;\mathbb{F}).
   \end{equation} 
  
  By Lemma~\ref{Lem:Decomp-Quotient}, we have the following cases:
  \n
  \begin{itemize}
   \item[\text{Case 1:}] $\|K\| \simeq   \text{Susp}(\|L\|) $.
   Then we have
      $\|K\| \simeq \begin{cases}
    pt,  &  \text{if $\|L\|\simeq pt$}; \\
    S^{\mdim(L)+1},  &  \text{if $\|L\|\simeq S^{\mdim(L)}$}.
 \end{cases}$
 
  \n
 
   \item[\text{Case 2:}] $\|K\| \simeq  \|X\|\slash \|L\|  $ where $X=K\setminus v$.
  \n
          \begin{itemize}   
   \item[(i)] If $\|X\|\simeq pt$, 
    $\|K\| \simeq \text{Susp} (\|L\|) \simeq \begin{cases}
    pt,  &  \text{if $\|L\|\simeq pt$}; \\
    S^{\mdim(L)+1} ,  &  \text{if $\|L\|\simeq S^{\mdim(L)}$}.
 \end{cases} 
  $\n 
  \item[(ii)] If $\|X\|\simeq S^{\mdim(X)}$, then since $i_*: H_*(L;\mathbb{F})\rightarrow H_*(X;\mathbb{F})$ is surjective,
   $\|L\|$ cannot be homotopy equivalent to a point. So 
   $\|L\|\simeq S^{\mdim(L)}$ by our induction, and we must have $\mdim(L)=\mdim(X)$. Moreover, since $X=K\setminus v$  in this case, it follows from Lemma~\ref{Lem:mdim-link-v} (b) that
    $L=X$. Therefore, $\|K\| \simeq  \|X\|\slash \|L\| =pt$.
    
 \end{itemize}   
 
 \end{itemize}
 
 \n
       
  By the above argument, $\| K \|$ is either contractible 
  or homotopy equivalent to $S^{\mdim(L)+1}=S^{\mdim(K)}$.
  So we finish the induction.
\end{proof}
\n

\begin{thm}\label{Thm:Equiv} 
  If a simplicial complex $K$ is weakly taut over a field $\mathbb{F}$, then
  \begin{itemize}
  \item[(a)] Every full subcomplex of $K$ is
  homotopy equivalent either to a point or to a sphere;\n
  
  \item[(b)] $K$ is weakly taut over an arbitrary field. 
  \end{itemize}
\end{thm}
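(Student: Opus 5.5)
Part (a) follows by combining Theorem~\ref{Thm:Subcomplex-Taut} with Theorem~\ref{Thm:Homotopy-Type}. Let $K$ be weakly taut over $\mathbb{F}$ and $J\subseteq V(K)$. By Theorem~\ref{Thm:Subcomplex-Taut}, the full subcomplex $K|_J$ is weakly taut over $\mathbb{F}$. Theorem~\ref{Thm:Homotopy-Type} then gives that $\|K|_J\|$ is homotopy equivalent either to a point or to the sphere $S^{\mdim(K|_J)}$. For $J=\varnothing$ we get the irrelevant complex, which we regard as the sphere $S^{-1}$; this is consistent with the convention $\widetilde{\beta}_{-1}(\{\varnothing\})=1$. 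This proves (a).

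For (b), fix any other field $\mathbb{F}'$. We use the expression~\eqref{Equ:D-tilde-K-J}:
$$\widetilde{D}(K;\mathbb{F}')=\sum_{J\subseteq[m]}\widetilde{tb}(K|_J;\mathbb{F}').$$
By (a), each $\|K|_J\|$ is either contractible or homotopy equivalent to a sphere. The reduced homology of a point vanishes over every field, and the reduced homology of a sphere (including $S^{-1}$ for $J=\varnothing$) is one-dimensional over every field. Hence $\widetilde{tb}(K|_J;\mathbb{F}')=\widetilde{tb}(K|_J;\mathbb{F})\in\{0,1\}$ for every $J$, since this value depends only on the homotopy type. Summing over $J$ gives
$$\widetilde{D}(K;\mathbb{F}')=\widetilde{D}(K;\mathbb{F})=2^{m-\mdim(K)-1}.$$
The quantity $\mdim(K)$ is purely combinatorial, so $K$ is weakly taut over $\mathbb{F}'$.

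There is no real obstacle, since the hard work is in Theorems~\ref{Thm:Subcomplex-Taut} and~\ref{Thm:Homotopy-Type}. Two points need care. First, Theorem~\ref{Thm:Homotopy-Type} is stated for complexes weakly taut over a given field $\mathbb{F}$, and we apply it to each $K|_J$ over the same $\mathbb{F}$; this is legitimate. Second, the field-independence argument relies on the conclusion being a genuine homotopy equivalence rather than merely an $\mathbb{F}$-homology statement. That is exactly what (a) provides, so (a) must be established before (b).
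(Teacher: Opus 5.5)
Your proposal is correct and follows the paper's proof essentially verbatim: (a) comes from Theorem~\ref{Thm:Subcomplex-Taut} combined with Theorem~\ref{Thm:Homotopy-Type}, and (b) comes from the sum~\eqref{Equ:D-tilde-K-J} together with the fact that $\widetilde{tb}$ of a point or a sphere is the same over every field. Your explicit treatment of $J=\varnothing$ as $S^{-1}$ is a small clarification that the paper leaves implicit.
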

\begin{proof}
(a)  By Theorem~\ref{Thm:Subcomplex-Taut}, every full subcomplex of $K$ is weakly taut over $\mathbb{F}$, and hence homotopy equivalent either to a point or to a sphere by Theorem~\ref{Thm:Homotopy-Type}.\n
  
  (b) Let $\mathbb{G}$ denote an arbitrary field. For every $J\subseteq V(K)=[m]$, the full subcomplex $K|_J$ satisfies
  $\widetilde{tb}(K|_J;\mathbb{G}) =
 \widetilde{tb}(K|_J;\mathbb{F})$ 
   since the geometric realization of $K|_J$ is homotopy equivalent either to a point or to a sphere. Then
  $$ \widetilde{D}(K;\mathbb{G}) = \sum_{J\subseteq [m]}
 \widetilde{tb}(K|_J;\mathbb{G}) = \sum_{J\subseteq [m]}
 \widetilde{tb}(K|_J;\mathbb{F}) =  \widetilde{D}(K;\mathbb{F})=2^{m-\mdim(K)-1}. $$
 So $K$ is weakly taut over $\mathbb{G}$.
  \end{proof}

 From the above results, we see that the weakly tautness of a simplicial complex $K$ put very strong restrictions on the topology of $K$, either locally and globally.
As a corollary, we prove that the weakly tautness puts some restriction on the dimension of a simplicial complex
$K$ relative to the vertex number of $K$. \n

 	\begin{cor} \label{Cor:Weak-Taut-Range}
		If a simplicial complex $K$ with $m$ vertices is weakly taut, then 
			 $$\left[\frac{m-1}{2}\right] \leq \dim(K) \leq m-1.$$
\end{cor}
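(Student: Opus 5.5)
The upper bound $\dim(K)\leq m-1$ is trivial, since $K$ has only $m$ vertices. For the lower bound I will \emph{not} try to bound $\mdim(K)$ from below. That would fail: $\Delta^{[1]}\sqcup\Delta^{[m-1]}$ is weakly taut with $\mdim=0$. Instead I will argue by induction on $m$, using the decomposition~\eqref{Equ:Decomposition} and bounding $\dim(K)$ directly. The cases $m\leq 2$ are immediate.

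For the inductive step, choose $v\in V_{\mdim}(K)$ and write $L=\link_K v$ and $X=K|_{V(\link_K v)}$, so that $m=1+m_v+r_v$. By Theorem~\ref{Thm:K-v-Join}, $K\setminus v\cong X*\Delta^{[r_v]}$. If $m_v=0$, then $L=\{\varnothing\}$, and Corollary~\ref{Cor:Disconnected} gives $K\cong\Delta^{[1]}\sqcup\Delta^{[m-1]}$. This has dimension $m-2$, and $m-2\geq[\frac{m-1}{2}]$ holds for all $m\geq 2$, as one checks directly.

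So assume $m_v\geq 1$. By Corollary~\ref{Cor:bigrad-Formula}~(d), $L$ is weakly taut. By Theorem~\ref{Thm:Subcomplex-Taut}, $X$ is weakly taut as a full subcomplex of $K$. Both have $m_v<m$ vertices, so the induction hypothesis gives
\[
\dim L\geq \left[\tfrac{m_v-1}{2}\right]=:a,\qquad \dim X\geq a .
\]
Two estimates on $\dim K$ follow. First, $\dim K\geq \dim(\star_K v)=\dim L+1\geq a+1$. Second, $\dim K\geq\dim(K\setminus v)=\dim X+r_v\geq a+r_v$. (With the convention $\Delta^{[0]}=\{\varnothing\}$, the join $X*\Delta^{[r_v]}$ raises dimension by exactly $r_v$.)

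It remains to compare $\max(a+1,\,a+r_v)$ with $[\frac{m-1}{2}]=[\frac{m_v+r_v}{2}]$. If $r_v\leq 1$, then $[\frac{m_v+r_v}{2}]\leq[\frac{m_v+1}{2}]=a+1$. If $r_v\geq 2$, then
\[
a+r_v\geq \frac{m_v-2}{2}+r_v=\frac{m_v+r_v}{2}+\frac{r_v-2}{2}\geq\frac{m_v+r_v}{2}\geq\left[\frac{m_v+r_v}{2}\right].
\]
In both cases $\dim K\geq[\frac{m-1}{2}]$, which completes the induction.

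The main obstacle is conceptual rather than computational. One has to see that the naive route, bounding $\mdim$ via the link alone, fails when $r_v$ is large. The remedy is to use the join structure $K\setminus v\cong X*\Delta^{[r_v]}$ from Theorem~\ref{Thm:K-v-Join}, together with the hereditary property of Theorem~\ref{Thm:Subcomplex-Taut} applied to $X$. Together these recover the dimension lost on the large-$r_v$ side.
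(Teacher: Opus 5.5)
Your proof is correct and follows essentially the same route as the paper's. Both argue by induction on $m$ at a vertex $v\in V_{\mdim}(K)$, use $K\setminus v\cong K|_{V(\link_K v)}*\Delta^{[r_v]}$ to bound $\dim K$ below by $\max\{\dim(\star_K v),\dim(K\setminus v)\}$, and finish with a case split on $r_v$. The only difference is cosmetic: you handle $m_v=0$ separately via Corollary~\ref{Cor:Disconnected}, whereas the paper lets the irrelevant complex $\{\varnothing\}$ enter the induction implicitly.
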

	\begin{proof}
	 We do induction on the number of vertices of $K$.
	 Assume that the result is true when $|V(K)| < m$. Take a vertex $v\in V_{\mdim}(K)$. Then by Theorem~\ref{Thm:K-v-Join},
			$$K\setminus v\cong K|_{V(\link_Kv)}*\Delta^{[r_v]},\ \text{where}\ r_v= |V(K)| -  |V(\star_Kv)|.$$
			Note that $|V(\link_K v)| = m-r_v-1$,
	 	Moreover, by Theorem~\ref{Thm:Weakly-Taut} and Theorem~\ref{Thm:Subcomplex-Taut}, both $\link_Kv$ and $K|_{V(\link_Kv)}$ are weakly taut. So by the induction hypothesis,
	 	$$\ \ \quad \dim(\link_K v) \geq \left[ \frac{m-r_v-1-1}{2} \right] = \left[ \frac{m-r_v}{2} \right]-1,$$
	 	$$\dim(K|_{V(\link_Kv)})\geq \left[\frac{m-r_v-1-1}{2}\right] = \left[\frac{m-r_v}{2}\right]-1.$$
	 	
	 	Hence \begin{align*}
	  	\dim(\star_K v)&=\dim(\link_K v)+1\geq \left[ \frac{m-r_v}{2} \right],\\
			\dim (K\setminus v) &= \dim (K|_{V(\link_K v)})+r_v \geq\left[\frac{m+r_v}{2}\right]-1.
		\end{align*}
		Then we have
		\begin{align*}
		  \dim (K) & \geq \max \{\dim(\star_K v),\dim (K\setminus v)\}\\
		  &\geq \max \left\{\left[\frac{m-r_v}{2}\right],\left[\frac{m+r_v}{2}\right]-1\right\} 
		  = \begin{cases}
   \left[ \frac{m}{2} \right] ,  &  \text{if $r_v=0$  }; \\
   \left[ \frac{m-1}{2} \right]  ,  &  \text{if $r_v=1$};\\
   \geq \left[ \frac{m}{2} \right] ,  &  \text{if $r_v\geq 2$}.   
 \end{cases}
		  \end{align*}
		  So we always have $\dim(K)\geq \left[\frac{m-1}{2}\right]$.  Note that this lower bound can be reached by taut simplicial complexes (see the discussion following
		  Theorem~\ref{Thm:Main-1}). 
\end{proof}
		
		\vskip .4cm

		\section{Simplicial complexes with minimal Taylor resolution} \label{Sec:Taylor}

	Let $K$ be a simplicial complex with vertex set $V(K)=\{v_1,\cdots, v_m\}$.
	For a subset $N\subset V(K)$, let $v_N \in \mathbb{F}\left[v_1, \ldots, v_m\right]$ 
	denote the square-free monomial corresponding to
 $N$, and we set $v_{\emptyset}=1$.
	\n
	
		 Let $N_1, \ldots, N_r \subseteq V(K)$ be all the minimal non-faces of $K$. Then we have
$$
\mathbb{F}[K]=\mathbb{F}\left[v_1, \ldots, v_m\right] /\left(v_{N_1}, \ldots, v_{N_r}\right) .
$$
The Taylor resolution for $\mathbb{F}[K]$ is the free $\mathbb{F}\left[v_1, \ldots, v_m\right]$-module resolution
$$
\cdots \overset{d}{\longrightarrow} R^{-\ell} \overset{d}{\longrightarrow} R^{-\ell+1} \overset{d}{\longrightarrow} \cdots \overset{d}{\longrightarrow} R^0=\mathbb{F}\left[v_1, \ldots, v_m\right] \longrightarrow \mathbb{F}[K],
$$
such that $R^{-\ell}$ is the free $\mathbb{F}\left[v_1, \ldots, v_m\right]$-module generated by symbols $w_{i_1, \ldots, i_{\ell}}$ for $1 \leqslant i_1<\cdots<i_{\ell} \leqslant r$ with the differential
$$
d\left(w_{i_1, \ldots, i_{\ell}}\right)=\sum_{k=1}^{\ell}(-1)^{k+1} v_{N_{i_k}-(N_{i_1} \cup \ldots \cup \widehat{N_{i_k}} \cup \ldots \cup N_{i_{\ell}})} w_{i_1, \ldots, \widehat{i_k}, \ldots, i_{\ell}},
$$
 We say that the Taylor resolution is \emph{minimal} if the differential satisfies
$$
d \otimes_{\mathbb{F}\left[v_1, \ldots, v_m\right]} \mathbb{F}=0.
$$

  It was shown in~\cite{Anton16} and~\cite{IriKishi18} that the minimality of the Taylor resolution of a simplicial complex can be interpreted combinatorially as follows.
  Let 
  $$[m]=\{v_1,\cdots, v_m\}, \ \ \mathbf{a}=\left\{v_{1}, \cdots, v_{r}\right\}, \ 1\leq r \leq m.$$ 
  
  Suppose 
  $\mathbf{N}=\left\{N_1, \cdots, N_r\right\}$ is a collection of subsets of $[m]\backslash \mathbf{a}=\{v_{r+1},\cdots, v_m\}$, where 
   it is possible that $N_i=N_j$ for some $i\neq j$. 
   Moreover, define
     $$ \widetilde{N}_{i}=N_{i} \cup \{ v_{i}\}, \ 1\leq i \leq r. $$ 
  
 \begin{defi} \label{Defi:KN}
  Let $K(\mathbf{N})$ denote the simplicial complex with vertex set $[m]$ whose minimal non-faces are $\widetilde{N}_1, \cdots, \widetilde{N}_r$. If $\mathbf{N}$ is empty, define $K(\mathbf{N})= \Delta^{[m]}$. 
  \end{defi}

 \begin{thm}[{\cite[Lemma 4.11]{Anton16}, \cite[Proposition 2.1]{IriKishi18}}] \label{Thm:IriKishi}
  A simplicial complex $K$ on $[m]$ has a minimal Taylor resolution if and only if there is a sequence $\mathbf{N}$
   such that $K \cong K(\mathbf{N})$.
 \end{thm}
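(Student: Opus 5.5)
The statement is Theorem~\ref{Thm:IriKishi}, cited from~\cite{Anton16,IriKishi18}. I would prove it directly from the Taylor differential.

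\textbf{Minimality criterion.} After tensoring with $\mathbb{F}$, a coefficient $v_{N_{i_k}-(N_{i_1}\cup\cdots\cup\widehat{N_{i_k}}\cup\cdots\cup N_{i_\ell})}$ survives exactly when the monomial is $1$, i.e. when
$$N_{i_k}\subseteq\bigcup_{j\neq k}N_{i_j}.$$
So the Taylor resolution is minimal if and only if, for every subfamily $\{N_{i_1},\dots,N_{i_\ell}\}$ of minimal non-faces, no member is contained in the union of the others. Applying this to the whole family and then restricting to subfamilies, minimality is equivalent to: each minimal non-face $N_i$ is not contained in $\bigcup_{j\neq i}N_j$. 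Equivalently, every $N_i$ has a \emph{private} vertex, one lying in no other minimal non-face.

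\textbf{``If'' direction.} Let $K\cong K(\mathbf{N})$, with minimal non-faces $\widetilde N_i=N_i\cup\{v_i\}$. The vertices $v_i\in\mathbf a$ are distinct, and $v_i$ belongs to no $N_j$ because the $N_j$ lie in $[m]\setminus\mathbf a$. Hence $v_i$ is a private vertex of $\widetilde N_i$, and the criterion gives minimality. One technical point needs care: Definition~\ref{Defi:KN} presupposes that the sets $\widetilde N_i$ really are the minimal non-faces of some complex. This holds because they form an antichain: $\widetilde N_i\subseteq\widetilde N_j$ with $i\neq j$ would force $v_i\in N_j$, which is impossible.

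\textbf{``Only if'' direction.} Suppose $K$ has a minimal Taylor resolution with minimal non-faces $N_1,\dots,N_r$. For each $i$, choose a private vertex $a_i\in N_i\setminus\bigcup_{j\neq i}N_j$. These are distinct, so after relabelling vertices, $a_i=v_i$ for $1\le i\le r$. Set $N_i'=N_i\setminus\{v_i\}$. Then
$$N_i'\subseteq[m]\setminus\{v_1,\dots,v_r\},$$
since $N_i$ contains no other $v_j$ by privacy. It follows that $K=K(\mathbf N')$ with $\mathbf N'=(N_1',\dots,N_r')$. If $K$ has no minimal non-faces, then $K=\Delta^{[m]}$, which is the case $\mathbf N$ empty.

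\textbf{Main obstacle.} The step needing most care is the reduction from ``all sub-differentials vanish'' to ``each $N_i$ has a private vertex''. This requires a genuine check, because the Taylor complex is indexed by all subsets $\{i_1,\dots,i_\ell\}$. One direction is immediate from the case $\ell=r$. For the converse, note that a private vertex of $N_{i_k}$ relative to the whole family is also private relative to any subfamily. Hence every coefficient is a nonconstant monomial, and it vanishes after tensoring with $\mathbb{F}$.
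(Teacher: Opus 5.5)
Your proof is correct. You reduce $d\otimes\mathbb{F}=0$ to ``every minimal non-face has a private vertex'', using the top-degree differential for one direction and the monotonicity of unions under passing to subfamilies for the other, and then relabel the private vertices as $\mathbf{a}$; together with the antichain check that makes $K(\mathbf{N})$ well defined, this is complete. The paper does not prove this result itself, citing Ayzenberg and Iriye--Kishimoto instead, and your private-vertex argument is essentially the standard criterion used there.
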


Next, we prove that a simplicial complex $K$ is weakly taut if and only if $K$ has a minimal Taylor resolution.
 In the following, let 
  $$\mu(K)= \text{the
number of minimal non-faces of $K$}.$$

\begin{lem}\label{Lem:K-N-1}
For any sequence $\mathbf{N}=\{N_1,\cdots, N_r\}$ of subsets of $ [m] \backslash \mathbf{a}$ where $\mathbf{a}=\left\{v_{1}, \cdots, v_{r}\right\}$,
 $\mu(K(\mathbf{N}))=m-\mathrm{mdim}(K(\mathbf{N}))-1$.
 \end{lem}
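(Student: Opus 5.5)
The plan is to compute both sides of the identity directly from the combinatorial description of $K(\mathbf{N})$. We will show separately that $\mu(K(\mathbf{N}))=r$ and that $\mdim(K(\mathbf{N}))=m-r-1$. The case where $\mathbf{N}$ is empty is trivial: then $K(\mathbf{N})=\Delta^{[m]}$, which has no non-empty minimal non-faces and satisfies $\mdim=m-1$. So we assume $r\geq 1$. Throughout we use that a subset $\sigma\subseteq[m]$ is a face of $K(\mathbf{N})$ if and only if $\sigma$ contains none of the sets $\widetilde{N}_i$.

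\emph{Step 1: $\mu(K(\mathbf{N}))=r$.} The sets $\widetilde{N}_1,\cdots,\widetilde{N}_r$ are pairwise distinct and form an antichain, even if $N_i=N_j$ for some $i\neq j$. Indeed, $v_i\in\widetilde{N}_i$, while $v_i\notin\widetilde{N}_j$ for $j\neq i$ because $N_j\subseteq[m]\setminus\mathbf{a}$. Hence all $r$ of them are genuinely minimal non-faces, and there are no others by Definition~\ref{Defi:KN}. This gives $\mu(K(\mathbf{N}))=r$. (We tacitly take every $N_i\neq\varnothing$, so that each $v_i$ is a vertex of $K(\mathbf{N})$; this is implicit in the requirement $V(K(\mathbf{N}))=[m]$.)

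\emph{Step 2: some maximal simplex has exactly $m-r$ vertices.} Take $\sigma_0=[m]\setminus\mathbf{a}$. It is a face, since every $\widetilde{N}_i$ contains $v_i\in\mathbf{a}$. It is maximal, because for each $i$ we have $\widetilde{N}_i\subseteq\sigma_0\cup\{v_i\}$, so $\sigma_0\cup\{v_i\}$ is not a face. Since $|\sigma_0|=m-r$, this gives $\mdim(K(\mathbf{N}))\leq m-r-1$.

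\emph{Step 3: every maximal simplex has at least $m-r$ vertices.} This is the main step. Let $\sigma$ be a maximal simplex and let $M=[m]\setminus\sigma$ be the set of missing vertices. We construct an injection $\phi:M\to\mathbf{a}$.
\begin{itemize}
\item If $v_i\in M\cap\mathbf{a}$, set $\phi(v_i)=v_i$.
\item If $u\in M\setminus\mathbf{a}$, maximality of $\sigma$ gives some $\widetilde{N}_j\subseteq\sigma\cup\{u\}$ with $u\in\widetilde{N}_j$. Since $u\notin\mathbf{a}$, we have $u\in N_j$, and moreover $v_j\in\sigma$ and $N_j\setminus\{u\}\subseteq\sigma$. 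Choose one such $j$ and set $\phi(u)=v_j$.
\end{itemize}
To check injectivity, note first that images of the second kind lie in $\sigma$, while images of the first kind lie in $M$, so the two kinds cannot collide. Next, suppose two distinct $u,u'\in M\setminus\mathbf{a}$ had the same image $v_j$. Then $u'\in N_j\setminus\{u\}\subseteq\sigma$, contradicting $u'\in M$. Hence $|M|\leq r$, so $|\sigma|\geq m-r$ and $\mdim(K(\mathbf{N}))\geq m-r-1$.

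Combining Steps 2 and 3 gives $\mdim(K(\mathbf{N}))=m-r-1$. Together with Step 1 this yields $\mu(K(\mathbf{N}))=r=m-\mdim(K(\mathbf{N}))-1$.
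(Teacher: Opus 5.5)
Your proof is correct and follows essentially the same route as the paper. Both show $\mu(K(\mathbf{N}))=r$ and then $\mdim(K(\mathbf{N}))=m-r-1$, using that $\mathbf{a}^c$ is a maximal face and that a maximal face containing $s$ vertices of $\mathbf{a}$ misses at most $s$ vertices of $\mathbf{a}^c$, at most one from each corresponding $N_j$. Your injection $\phi$ is a cleaner, fully justified version of the paper's assertion that $\xi_{\mathbf{a}^c}=\mathbf{a}^c\setminus\{w_1,\dots,w_s\}$ with $w_\ell\in N_\ell$, and you also verify the antichain property behind $\mu=r$ and the maximality of $\mathbf{a}^c$, which the paper leaves implicit.
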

\begin{proof}
 Since $\mu(K(\mathbf{N}))=r$, it is equivalent to prove
that $\mathrm{mdim}( K(\mathbf{N}))=m-r-1$. 
Note that any maximal face $\xi$ of $K(\mathbf{N})$ as a set can be written as a disjoint union $\xi=\xi_{\mathbf{a}} \cup \xi_{\mathbf{a}^{c}}$ where $\xi_{\mathbf{a}} \subset \mathbf{a}$ and  $\xi_{\mathbf{a}^c} \subset \mathbf{a}^c=[m] \backslash \mathbf{a}$.\n
\begin{itemize}
\item If $\xi_{\mathbf{a}}=\varnothing$, then by the definition of $K(\mathbf{N})$, we must have $\xi=\mathbf{a}^c$ since $\xi$ is maximal. So $\mathrm{dim}(\xi)=m-r-1$.\n

\item If $\xi_{\mathbf{a}} \neq \varnothing$, say $\xi_{\mathbf{a}}=\left\{v_1, \cdots, v_s\right\}$ for some $s \leqslant r$. Since a subset of $[m]$ is face of $K(\mathbf{N})$ if and only if it contains no minimal non-faces and since $\xi$ is maximal, $\xi_{\mathbf{a}^{c}}$ must be the form $\mathbf{a}^{c} \backslash \cup^s_{\ell=1} v_{i_{\ell}} $ with
 $v_{i_{\ell}} \in N_{\ell}$ for each $1 \leqslant \ell \leqslant s$. Note that $N_{\ell}$ and $N_{\ell'}$ may have common vertices. So
$$
\mathrm{dim} (\xi) =\left| \xi_{\mathbf{a}}\right|+\left|\xi_{\mathbf{a}^c} \right|-1 \geqslant s+(m-r)-s-1=m-r-1 .
$$
\end{itemize}
So we can conclude that $\mathrm{mdim}(K(\mathbf{N}))= m-r-1$.
\end{proof}

\begin{lem} \label{Lem:K-N-2}
 For any simplicial complex $K$ with $m$ vertices, 
 $$\mu(K)\geq m-\mathrm{mdim}(K)-1.$$
 Moreover, if $\mu(K) = m-\mathrm{mdim}(K)-1$, then
 $K$ has a minimal Taylor resolution.
\end{lem}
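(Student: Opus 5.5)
The plan is a direct combinatorial count. Fix a maximal simplex $\xi\in\Delta^{max}_{\mdim}(K)$, so $|\xi|=\mdim(K)+1$, and set $\mathbf{a}=[m]\setminus\xi$. Then $|\mathbf{a}|=m-\mdim(K)-1$. To each vertex of $\mathbf{a}$ I will attach its own minimal non-face, and then read off both assertions of the lemma from this assignment.

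\emph{Step 1 (the inequality).} Let $u\in\mathbf{a}$. Since $\xi$ is maximal, $\xi\cup\{u\}$ is not a face. It therefore contains some minimal non-face; choose one and call it $N_u$. Because $\xi$ itself is a face, $N_u\not\subseteq\xi$, and so $u\in N_u\subseteq\xi\cup\{u\}$. Thus $N_u$ meets $\mathbf{a}$ in exactly the one vertex $u$. It follows that $u\neq u'$ forces $N_u\neq N_{u'}$. Hence $K$ has at least $|\mathbf{a}|$ distinct minimal non-faces, i.e. $\mu(K)\geq m-\mdim(K)-1$.

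\emph{Step 2 (the equality case).} Suppose $\mu(K)=m-\mdim(K)-1=|\mathbf{a}|$. Then the sets $N_u$, $u\in\mathbf{a}$, are \emph{all} the minimal non-faces of $K$. Order the vertices so that $\mathbf{a}=\{v_1,\dots,v_r\}$, where $r=|\mathbf{a}|$. For each $i$ write $N_i:=N_{v_i}\setminus\{v_i\}$. This is a subset of $\xi=[m]\setminus\mathbf{a}$, and repetitions among the $N_i$ are allowed, as in Definition~\ref{Defi:KN}. Moreover $\widetilde N_i=N_i\cup\{v_i\}=N_{v_i}$. So the minimal non-faces of $K$ are exactly $\widetilde N_1,\dots,\widetilde N_r$.

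A simplicial complex on $[m]$ is determined by its minimal non-faces. Therefore $K=K(\mathbf{N})$ with $\mathbf{N}=\{N_1,\dots,N_r\}$. In the degenerate case $r=0$, $K=\Delta^{[m]}=K(\varnothing)$. Theorem~\ref{Thm:IriKishi} then gives that $K$ has a minimal Taylor resolution.

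\emph{Expected obstacle.} I expect no serious obstacle; the argument is elementary. The only points needing care are two. First, one must verify that each chosen $N_u$ really contains $u$; this is exactly where the fact that $\xi$ is a face is used. Second, one must check that the resulting $\mathbf{N}$ fits Definition~\ref{Defi:KN}: each $N_i$ lies in $[m]\setminus\mathbf{a}$, and the possibility $N_i=N_j$ is allowed there. Note also that $\{u\}$ is never a non-face, since every vertex of $[m]$ is a vertex of $K$. Hence each $N_i\neq\varnothing$, although Definition~\ref{Defi:KN} does not require this.
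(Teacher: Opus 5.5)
Your proof is correct and matches the paper's argument: fix a maximal simplex $\xi$ with $\dim(\xi)=\mdim(K)$, attach to each vertex $u\notin\xi$ a minimal non-face inside $\xi\cup\{u\}$ that contains $u$, and in the equality case read off $K\cong K(\mathbf{N})$ with $\mathbf{a}=V(K)\setminus\xi$, then apply Theorem~\ref{Thm:IriKishi}. Your extra checks (that $u\in N_u$ because $\xi$ is a face, that $N_u$ meets $\mathbf{a}$ only in $u$, and the case $r=0$) spell out what the paper calls clear.
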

\begin{proof}
 Fix a maximal simplex $\xi$ of $K$ with $\mathrm{dim}(\xi)=\mathrm{mdim}(K)$ and take an arbitrary vertex $v \in V(K) \backslash V(\xi)$. Since $\xi \cup \{v\}$ is not a face of $K$, it contains a minimal non-face of the form $\xi_v \cup \{v\}$ where $\xi_v \subseteq \xi$. Moreover, for distinct vertices $v, w \in V(K) \backslash V(\xi)$, the minimal non-faces $\xi_v \cup \{v\}$ and $\xi_w \cup \{w\}$ are clearly different. Hence the number of minimal non-faces of $K$ is no less than $m-\mathrm{dim}(\xi)-1=m-\mathrm{mdim} (K)-1$.\n
 Moreover, when $\mu(K) = m-\mathrm{mdim}(K)-1$,
 all the minimal non-faces of $K$ are
 $$ \big\{ \xi_v \cup \{v\} \mid \xi_v\subseteq \xi,\ \forall\, v \in V(K)\backslash V(\xi) \big\}.$$
 Then $K\cong K(\mathbf{N})$ where $\mathbf{a}= V(K)\backslash V(\xi)$ and $\mathbf{N}=\{ \xi_v \mid \forall\, v\in V(K)\backslash V(\xi) \}$. So by Theorem~\ref{Thm:IriKishi}, $K$ has a minimal Taylor resolution.
\end{proof}

 The following is the main theorem of this section which 
 relates the weakly tautness of a simplicial complex $K$ to
 the minimality of the Taylor resolution of $\mathbb{F}[K]$.
  
 \begin{thm} \label{Thm:Taylor-2}
 Let $K$ be a simplicial complex on $[m]$. The following conditions are equivalent:
 \begin{itemize}
 \item[(a)] $K$ is weakly taut.\n
 \item[(b)] The number of minimal non-faces of $K$ is equal to $m-\operatorname{mdim}(K)-1$.\n
 \item[(c)] $K$ has a minimal Taylor resolution.
 \end{itemize} 
 \end{thm}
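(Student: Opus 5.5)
We prove the cycle (b) $\Rightarrow$ (c) $\Rightarrow$ (a) $\Rightarrow$ (b). The step (b) $\Rightarrow$ (c) is exactly the second part of Lemma~\ref{Lem:K-N-2}, so nothing new is needed there.

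For (c) $\Rightarrow$ (a), assume $K$ has a minimal Taylor resolution. By Theorem~\ref{Thm:IriKishi}, $K\cong K(\mathbf{N})$ for some sequence $\mathbf{N}$, and Lemma~\ref{Lem:K-N-1} gives $\mu(K)=m-\mdim(K)-1$. Minimality means the differential $d\otimes_{\mathbb{F}[v_1,\cdots,v_m]}\mathbb{F}$ vanishes. Hence $\mathrm{Tor}$ is the whole complex $R^\bullet\otimes\mathbb{F}$, whose dimension is the number of generators $w_{i_1,\ldots,i_\ell}$, namely $2^{\mu(K)}$. Therefore
\[
\widetilde{D}(K)=2^{\mu(K)}=2^{m-\mdim(K)-1},
\]
which is the definition of weak tautness.

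For (a) $\Rightarrow$ (b), the lower bound $\mu(K)\geq m-\mdim(K)-1$ is already in Lemma~\ref{Lem:K-N-2}, so only the reverse inequality needs proof. Each minimal non-face $J$ gives a full subcomplex $K|_J\cong\partial\Delta^J$, a sphere of dimension $|J|-2$. By Hochster's formula it contributes one to $\beta^{-1,2|J|}(\mathbb{F}[K])$, since $\widetilde{H}_{|J|-2}(K|_J)\neq 0$ and $|J|-1-1=|J|-2$.

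Conversely, every nonzero contribution to $\beta^{-1}$ comes from a minimal non-face. This is the standard fact that $\beta^{-1}$ counts minimal generators of $\mathcal{I}_K$, and the minimal generators are exactly the monomials $v_J$ of minimal non-faces. Hence $\beta^{-1}(\mathbb{F}[K])=\mu(K)$. For weakly taut $K$, \eqref{Equ:Pattern-WK} with $i=1$ gives $\beta^{-1}(\mathbb{F}[K])=m-\mdim(K)-1$, and therefore $\mu(K)=m-\mdim(K)-1$.

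The only point needing care is the identification $\beta^{-1}(\mathbb{F}[K])=\mu(K)$. If we prefer to stay within Hochster's formula, we argue as follows. $\widetilde{H}_{|J|-2}(K|_J)\neq 0$ forces $K|_J\cong\partial\Delta^J$ by Lemma~\ref{Lem:Simplex}(b), and that is exactly the condition that $J$ is a minimal non-face. Since $\widetilde{H}_{|J|-2}(\partial\Delta^J)$ is one-dimensional, each such $J$ contributes exactly one. This makes the proof self-contained, and the remaining implications are immediate bookkeeping.
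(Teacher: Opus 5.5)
Your proof is correct. The steps (b)$\Rightarrow$(c) and (c)$\Rightarrow$(a) coincide with the paper's. Summing to $2^{\mu(K)}$ instead of listing $\beta^{-i}=\binom{\mu(K)}{i}$ makes no difference.

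The real difference is in (a)$\Rightarrow$(b). The paper proves it by induction on $m$ through a vertex $v\in V_{\mdim}(K)$:
\begin{itemize}
\item If $|V(\link_K v)|=m-1$, it uses the splitting \eqref{Equ:link-v-K} and Lemma~\ref{Lem:Simplex}(b) to show $\mu(\link_K v)=\mu(K)$.
\item Otherwise it passes to $K'=K|_{V(\star_K v)}$. It uses the decomposition \eqref{Equ:Decomposition}, the count $\mu(K)=\mu(K')+r_v$, and Corollary~\ref{Cor:two-vertex} to get $v\in V_{\mdim}(K')$ and then $\mdim(K')=\mdim(K)$.
\end{itemize}
This draws on most of Section~\ref{Sec-Weakly-Taut}.

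You instead observe that $\beta^{-1}(\mathbb{F}[K])=\mu(K)$ for every $K$. Your Hochster-formula argument for this, via Lemma~\ref{Lem:Simplex}(b), is exactly right. You then read off $\beta^{-1}(\mathbb{F}[K])=m-\mdim(K)-1$ from \eqref{Equ:Pattern-WK}, which needs only Theorem~\ref{Thm:Binom-Lower-Bound}. In the degenerate case $m-\mdim(K)-1=0$, the equality $\widetilde{D}(K)=1$ forces $\beta^{-1}=0=\mu(K)$ as well.

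Your route is much shorter, and it gives more:
\begin{itemize}
\item Condition (b) is literally the equality case of the lower bound at $i=1$. Combined with (b)$\Rightarrow$(c)$\Rightarrow$(a), this shows that equality for $\beta^{-1}$ alone already forces weak tautness.
\item Your citation of the lower bound in Lemma~\ref{Lem:K-N-2} becomes unnecessary, since you obtain the equality directly.
\item It shows that the structural results of Section~\ref{Sec-Weakly-Taut} are not needed for this equivalence, despite the remark following the theorem.
\end{itemize}
What the paper's induction buys is extra structural information: how the minimal non-faces of $K$ match those of $\link_K v$ and of $K|_{V(\star_K v)}$. None of it is required for the statement itself.
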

 \begin{proof}
    ``(b) $\Rightarrow$ (c)'' This is contained in Lemma~\ref{Lem:K-N-2}.\n
    
    ``(c) $\Rightarrow$ (a)'' 
      Since the Taylor resolution of $\mathbb{F}[K]$ is minimal, we have
    $$\beta^{-i}(\mathbb{F}[K]) = \binom{\mu(K)}{i}, \ \text{for every}\ 0\leq i \leq \mu(K).$$
    By Theorem~\ref{Thm:IriKishi}, $K\cong K(\mathbf{N})$ for some sequence $\mathbf{N}$ of subsets of $V$. Then by Lemma~\ref{Lem:K-N-1}, $\mu(K)= m-\mathrm{mdim}(K)-1$.
    This implies that $K$ is weakly taut.\n
    
    ``(a) $\Rightarrow$ (b)'' We do induction on $m$. The result is clearly true when $m=1$.
    Choose an arbitrary vertex $v \in V_{\mathrm{mdim}}(K)$.  Since $K$ is weakly taut, so is $\link_K v$ and we have $\mathrm{mdim}(\link_K v) = \mathrm{mdim}(K)-1$.
    Moreover, by Corollary~\ref{Cor:mdimv-char} the condition Theorem~\ref{Thm:Weakly-Taut} (b) holds for
    the vertex $v$. This implies that for every subset $J \subset[m] \backslash \{v\}$, the equality in~\eqref{Equ:Reduced-Relation} holds:
    \begin{equation} \label{Equ:link-v-K}
\widetilde{H}_n(\link_K v|_J) \cong \widetilde{H}_{n+1}(K|_{J \cup \{v\}}) \oplus \widetilde{H}_n(K|_J), \ \text{for all}\ n\in \Z.
 \end{equation}

    First, we assume that $\left|V\left(\link_K v\right)\right|=m-1$. We prove that $\mu(\link_K v) = \mu(K)$. If $J \subset[m]\backslash \{v\}$ is a minimal non-face of $\operatorname{link}_K v$,  then $\link_K v|_J \cong \partial\Delta^{J}$. So
    $$ \widetilde{H}_n(\link_K v|_J) \cong \begin{cases}
\mathbb{F},  &  \text{if $n=|J|-2$}; \\
 0,  &  \text{otherwise}.
 \end{cases}  $$
  Then by~\eqref{Equ:link-v-K}, either $\widetilde{H}_{|J|-1}(K|_{J \cup \{v\}})\cong \mathbb{F}$ or $ \widetilde{H}_{|J|-2}(K|_J)\cong \mathbb{F}$.  
 So by Lemma~\ref{Lem:Simplex} (b), either $K|_{J \cup \{v\}}$ or $K|_J$ is isomorphic to the boundary of a simplex. Therefore,
  one of $J \cup v$ and $J$ is a minimal non-face of $K$. This implies $\mu(\link_K v) \leqslant \mu(K)$.\n
  
   Conversely, suppose $J \subset[m]$ is a minimal non-face of $K$. Then $K|_J \cong \Delta^{J}$ and 
   \begin{equation} \label{Equ:link-v-K-2}
   \widetilde{H}_n(K|_J) \cong \begin{cases}
\mathbb{F},  &  \text{if $n=|J|-2$}; \\
 0,  &  \text{otherwise}. 
 \end{cases}  
 \end{equation}
  
  There are the following two cases:
  \begin{itemize}
   \item If $v\in J$, then $J\backslash \{v\}$ cannot be
   a face of $\link_K v$ since $J$ is not a face of $K$. In fact, $J\backslash \{v\}$ is a minimal non-face of
  $\link_K v$ since any $J'\subsetneq J$ is a face of $K$.\n
  
  \item If $v \notin J$, it follows from~\eqref{Equ:link-v-K} and~\eqref{Equ:link-v-K-2} that
  $\widetilde{H}_{|J|-2}(\link_K v|_J) \neq 0$.
  Then by Lemma~\ref{Lem:Simplex} (b), $J$ is a minimal non-face of $\link_K v$. 
  \end{itemize}
 
 So we deduce that $\mu(\link_K v) \geqslant \mu(K)$.
 Therefore, $\mu(\link_K v) = \mu(K)$.
 \n
 Moreover, since $\link_K v$ is weakly taut with $m-1$ vertices, by our induction hypothesis,  
 $$\mu(\link_K v)=(m-1)- \mathrm{mdim}(\link_K v)-1= m-\mathrm{mdim}(K)-1.$$ It follows that $\mu(K)=m-\mathrm{mdim} ( K)-1$. \n
 
  Next, we deal with the case $|V(\link_K v)|<m-1$.
   Recall the decomposition~\eqref{Equ:Decomposition}
	  $$ K \cong (v*\link_K v)\cup_{\link_K v}
	     \big(K|_{V(\link_Kv)}*\Delta^{[r_v]}\big).$$
	  Since $K$ and $\link_K v$ are both weakly taut, we have
	  \begin{align*}
	    \widetilde{D}(K) =2^{m-\mathrm{mdim}(K)-1},\ \
	    \widetilde{D}(\link_K v) = 2^{m_v-\mathrm{mdim}(\link_K v)-1}.
	   \end{align*}
	  By Corollary 4.4 (c), we have
	     \begin{equation} \label{Equ:D-tilde-v}
	       \widetilde{D}(K) = 2^{m-m_v-1}  \widetilde{D}(\link_K v) .
	     \end{equation}
	     
  In addition, let 
  $$K'= (v*\link_K v)\cup_{\link_K v}
	    K|_{V(\link_Kv)} = K|_{V(\star_K v)}.$$
	     Then we can easily see that
	    \begin{equation} \label{Equ:mu-K}
	       \mu(K) = \mu(K') + r_v
	       \end{equation}
	    where each vertex $u\in \Delta^{[r_v]}$ contributes
	    a minimal non-face $\{u,v\}$ of $K$. 
	    \n
	    \textbf{Claim-1:} $v\in V_{\mathrm{mdim}}(K')$.
	    \n
	   We do induction on $r_v$. If $r_v=1$, the claim follows immediately from Corollary~\ref{Cor:two-vertex} since $v\in V_{\mathrm{mdim}}(K)$. If $r_v>1$, we take an arbitrary vertex $u\in \Delta^{[r_v]}$ and by Corollary~\ref{Cor:two-vertex} again, $v\in V_{\mathrm{mdim}}(K\backslash u)$. Then since
	   $$ K\backslash u \cong (v*\link_K v)\cup_{\link_K v}
	     \big(K|_{V(\link_Kv)}*\Delta^{[r_v-1]}\big),$$
	    we assert $v\in V_{\mathrm{mdim}}(K')$ by our induction.
	     Claim-1 is proved.\n
	     
	     \textbf{Claim-2:} $\mathrm{mdim} (K)=\mathrm{mdim} (K')$.\n
	     
	 Note that $|V(K')|=m_v+1 = m-r_v$ and $\link_{K'} v=\link_{K} v$. In addition, $K'$ is also weakly taut since it is a full subcomplex of $K$. Then by Claim-1 and~\eqref{Equ:D-tilde-v}, 
	   \begin{equation*} 
	       \widetilde{D}(K') = 2^{(m_v+1)-m_v-1}\widetilde{D}(\link_{K'} v)= \widetilde{D}(\link_{K'} v).
	     \end{equation*}
	  Then we have
	    \begin{align*}
	    2^{m-\mathrm{mdim} (K)-1} =  \widetilde{D}(K) &\overset{\eqref{Equ:D-tilde-v}}{=} 2^{m-m_v-1} \widetilde{D}(\link_K v)\\
	     &=2^{m-m_v-1} \widetilde{D}(\link_{K'} v)=2^{m-m_v-1} \widetilde{D}(K')\\
	     &=2^{m-m_v-1}\cdot 2^{(m_v+1)-\mathrm{mdim}(K')-1}=
	     2^{m-\mathrm{mdim} (K')-1}. 
	     \end{align*}
	    It follows that  $\mathrm{mdim} (K)=\mathrm{mdim} (K')$. Claim-2 is proved.\n
	    
	     Moreover, since 
	    $|V(\link_{K'} v)|= |V(K')|-1$,
	   by the argument of the case $\left|V\left(\link_K v\right)\right|=m-1$, we can deduce that
	   $$\mu(K') = |V(K')| - \mathrm{mdim}(K')-1 = m-r_v-\mathrm{mdim}(K')-1.$$
	   Plugging this into~\eqref{Equ:mu-K}, we obtain 
	   $$\mu(K)=m- \mathrm{mdim}(K')-1 = m- \mathrm{mdim}(K)-1.$$
	   This finishes the proof.
   \end{proof}
 
 \begin{rem}
 Many properties of $K(\mathbf{N})$ proved in~\cite{Anton16} and~\cite{IriKishi18} are parallel to what we have proved for weakly taut simplicial complexes in Section~\ref{Sec-Weakly-Taut} (for example, Theorem~\ref{Thm:Homotopy-Type} is equivalent to \cite[Proposition 4.2]{IriKishi18}). But to build the connection between weakly taut simplicial complexes and $K(\mathbf{N})$ via Theorem~\ref{Thm:Taylor-2}, we need to  prove some of these properties for
weakly taut simplicial complexes ahead of time.
 \end{rem}

 As a application of Theorem~\ref{Thm:Taylor-2}, we classify all the taut simplicial complexes in the following theorem.
 
   \begin{thm} \label{Thm:Main-1-Repeat}
   A finite simplicial complex
     $K$ is taut if and only if $K$ is isomorphic to
     $ \partial \Delta^{[n_1]}*\cdots*\partial \Delta^{[n_k]}$ or $\Delta^{[r]} * \partial \Delta^{[n_1]}*\cdots*\partial \Delta^{[n_k]}$ for some positive integers
     $n_1,\cdots, n_k$ and $r$.
  \end{thm}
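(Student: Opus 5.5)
The plan is to deduce the classification from Theorem~\ref{Thm:Taylor-2} together with the equivalence noted after Theorem~\ref{Thm:Main-1}: $\mathbb{F}[K]$ is a complete intersection if and only if the minimal non-faces of $K$ are pairwise disjoint, which is the same as $K$ having the stated join form.

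For the ``if'' direction, I would apply Lemma~\ref{Lem:Weak-Taut-Join} repeatedly. Each $\partial\Delta^{[n]}$ is taut: it has $n$ vertices, dimension $n-2$, and $\widetilde{D}=2$ by \eqref{Equ:D-tilde-K-J}, since only $J=\varnothing$ and $J=[n]$ contribute. The simplex $\Delta^{[r]}$ is taut because $\widetilde{D}=1=2^{0}$. Joins of taut complexes are taut by that lemma, so both families in the statement are taut.

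For the ``only if'' direction, suppose $K$ is taut, so that $K$ is weakly taut and pure with $\mdim(K)=\dim(K)=:d$. By Theorem~\ref{Thm:Taylor-2}, $\mu(K)=m-d-1$. The key step is to show that the minimal non-faces $N_1,\dots,N_\mu$ are pairwise disjoint. Let $U=N_1\cup\cdots\cup N_\mu$ be their union. The vertices outside $U$ are cone points, so $K=\Delta^{[m]\setminus U}*K|_U$. We may therefore reduce to the case $U=[m]$, using Lemma~\ref{Lem:Weak-Taut-Join} to keep tautness.

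In that case I would bound $\dim(K)$ from above. A face avoids containing every $N_i$, so choosing one vertex $x_i\in N_i$ for each $i$ gives a set $[m]\setminus\{x_1,\dots,x_\mu\}$ that contains no $N_i$ and is therefore a face. This face has cardinality at least $m-\mu$, with equality exactly when the $x_i$ are distinct. Purity, however, requires every maximal face to have exactly $d+1=m-\mu$ vertices.

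Now suppose two non-faces $N_1,N_2$ share a vertex $x$. I would choose $x_1=x_2=x$ and then pick the remaining $x_i$ arbitrarily. The resulting set $F=[m]\setminus\{x,x_3,\dots,x_\mu\}$ is a face of size at least $m-\mu+1>d+1$, which contradicts $\dim K=d$. So the non-faces are pairwise disjoint.

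Since $U=[m]$, this gives $K=\partial\Delta^{N_1}*\cdots*\partial\Delta^{N_\mu}$. Restoring the cone vertices produces the second family in the statement, with $r=m-|U|\geq1$ when $U\neq[m]$.

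The main obstacle is getting the counting right. The real ingredient is that $\mu(K)=m-\dim(K)-1$, which is exactly what Theorem~\ref{Thm:Taylor-2} supplies. I would double-check that $[m]\setminus\{x_i\}$ really is a face, meaning that every non-face contains a minimal non-face, and that the cone decomposition preserves tautness with the correct $\dim$ shift.
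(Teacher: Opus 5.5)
Your proposal is correct. Its decisive step matches the paper's: if two minimal non-faces share a vertex $w\in N_i\cap N_j$, one can build a face with $\dim(K)+2$ vertices.

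The route to that face differs. The paper uses (a)$\Rightarrow$(c) of Theorem~\ref{Thm:Taylor-2} and then Theorem~\ref{Thm:IriKishi} to replace $K$ by the model $K(\mathbf{N})$. In that model each minimal non-face $\widetilde{N}_i$ has a private vertex $v_i$. The set $\mathbf{a}^c$ is a maximal face of dimension $m-r-1$, so purity forces $\dim(K)=m-r-1$, and $(\mathbf{a}^c\setminus\{w\})\cup\{v_i,v_j\}$ is the oversized face.

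You use only the count from (a)$\Rightarrow$(b), namely $\mu(K)=m-\dim(K)-1$ after purity identifies $\mdim$ with $\dim$. You combine it with the observation that the complement of any transversal of the minimal non-faces is a face. Choosing a transversal that repeats $w$ gives the contradiction directly in $K$. In effect this is the paper's argument with the distinguished transversal $\{v_1,\dots,v_r\}$ replaced by an arbitrary one.

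What your version gains is independence from the external normal-form result of Ayzenberg and Iriye--Kishimoto, since the paper proves (a)$\Rightarrow$(b) by its own induction without it. What the paper's version gains is an explicit maximal face $\mathbf{a}^c$ to compare against.

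Your preliminary reduction to $U=[m]$ is unnecessary but harmless. The transversal argument works verbatim for any $K$, and pairwise disjointness alone already gives $K\cong\Delta^{[m]\setminus U}*\partial\Delta^{N_1}*\cdots*\partial\Delta^{N_\mu}$.
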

  \begin{proof}  
  The proof of the ``if'' part is trivial.
  For the ``only if'' part, it is equivalent to prove 
  that if $K$ is taut, then all the minimal non-faces of $K$ are pairwise disjoint. 
  By Theorem~\ref{Thm:Taylor-2}, $K$ has a minimal Taylor resolution. So by Theorem~\ref{Thm:IriKishi},
  $K\cong K(\mathbf{N})$ for some sequence of subsets
  $\mathbf{N}=\{ N_1,\cdots, N_r\}$ of $[m]\backslash \mathbf{a}$ where $m=|V(K)|$ and $\mathbf{a}=\{v_1,\cdots, v_r\}$. In addition, since $K$ is taut, $K$ must be pure.
  So the dimension of every maximal simplex of $K$ is equal to
  $\dim(K)$. Note that $\mathbf{a}^c=[m]\backslash \mathbf{a}$ is a maximal simplex of $K(\mathbf{N})$. So 
  $\mathrm{dim}(K)=m-r-1$. If $N_i\cap N_j\neq \varnothing$ for some $i\neq j$, then for any vertex
  $w \in N_i\cap N_j$, $(\mathbf{a}^c\backslash \{w\}) \cup \{v_i,v_j\} $ is also a maximal face.
  But the dimension of $(\mathbf{a}^c\backslash \{w\}) \cup \{v_i,v_j\} $ is $m-r$, contradiction.
 The theorem is proved.  
\end{proof}

\vskip .4cm

\section{Weakly taut simplicial complexes that are shifted} \label{Sec:Shifted}

 A simplicial complex $K$ with $m$ vertices 
  	is called \emph{shifted} if there exists a labeling of
  	the vertices of $K$ as $\{v_1,\cdots, v_m\}$ such that: for any face $\sigma$ of $K$ and any $v_i\in\sigma$, $(\sigma\backslash\{v_i\}) \cup \{v_j\}$ is also a face of $K$ for every $j<i$.  In other words, one can always swap a vertex with larger index for one with smaller index and stay inside the complex.  Note that the labeling of $V(K)$ by $1$ through $m$ is equivalent to defining a total order on $V(K)$. 
   The following lemma gives another way to understand a shifted simplicial complex via its minimal non-faces.
  	
  \begin{lem}\label{Lem:Shift-Equiv}
   A simplicial complex $K$ with $m$ vertices 
  	is shifted if and only if there exists a labeling of
  	the vertices of $K$ as $\{v_1,\cdots, v_m\}$ such that for any minimal non-face $J$ of $K$ and any $v_j\in  J$,  
  	$(J\backslash\{v_j\}) \cup \{v_i\}$ is not a face of $K$ for every $v_i\notin J$ with $i>j$.
  \end{lem}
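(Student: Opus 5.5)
We prove both directions with the same labeling, by contrapositive in each direction. Throughout, a subset of $V(K)$ is a face of $K$ if and only if it contains no minimal non-face of $K$.

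\textbf{Shifted implies the condition.} Fix a shifting labeling. Let $J$ be a minimal non-face, $v_j\in J$, and $v_i\notin J$ with $i>j$. Suppose, for contradiction, that $\sigma=(J\setminus\{v_j\})\cup\{v_i\}$ is a face of $K$. The vertex $v_i$ lies in $\sigma$, and $j<i$. By the shifting property, $(\sigma\setminus\{v_i\})\cup\{v_j\}=J$ is a face of $K$. This contradicts $J$ being a non-face.

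\textbf{The condition implies shifted.} Fix a labeling satisfying the condition. We must show: if $\sigma\in K$, $v_i\in\sigma$, $j<i$ and $v_j\notin\sigma$, then $\tau=(\sigma\setminus\{v_i\})\cup\{v_j\}$ is a face. (If $v_j\in\sigma$ there is nothing to show, since then $\tau\subseteq\sigma$.)

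Suppose $\tau$ is not a face. Then $\tau$ contains a minimal non-face $J$.

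1. If $v_j\notin J$, then $J\subseteq\sigma\setminus\{v_i\}\subseteq\sigma$. Faces are closed under subsets, so $J$ would be a face, a contradiction. Hence $v_j\in J$.
2. Since $v_i\notin\tau$ and $J\subseteq\tau$, we have $v_i\notin J$.
3. We have $i>j$, so the hypothesis applies to $J$, $v_j$ and $v_i$. It says that $J'=(J\setminus\{v_j\})\cup\{v_i\}$ is not a face.
4. But $J\setminus\{v_j\}\subseteq\sigma\setminus\{v_i\}$ and $v_i\in\sigma$, so $J'\subseteq\sigma$. Hence $J'$ is a face, a contradiction.

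Therefore $\tau\in K$, and the labeling is a shifting labeling.

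The only step needing care is the reduction in the second direction: we pass to a minimal non-face $J$ inside $\tau$ and must check that $v_j\in J$ and $v_i\notin J$, so that the hypothesis applies. After that, everything follows from closure of faces under subsets.
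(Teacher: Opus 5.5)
Your proof is correct and follows the paper's argument. Necessity (which the paper calls obvious) is your one-line contradiction. For sufficiency, the paper likewise passes to a minimal non-face $J\subseteq(\sigma\setminus\{v_i\})\cup\{v_j\}$, shows $v_j\in J$, and applies the hypothesis to find a non-face inside $\sigma$; your explicit check that $v_i\notin J$, which is needed for the hypothesis to apply, is a detail the paper leaves implicit.
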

  \begin{proof}
   The necessity is obvious. For the sufficiency, 
  suppose we have such a labeling of the vertices of $K$
  as described in the lemma. Assume that there is a  face $\sigma$ of $K$ and some vertex
  $v_i\in \sigma$ and $j<i$, but
   $(\sigma\backslash\{v_i\}) \cup \{v_j\}$ is not a face of $K$. Let $J$ be a minimal non-face contained in  $(\sigma\backslash\{v_i\}) \cup \{v_j\}$. We must have $v_j\in J$ since otherwise $J\subseteq \sigma\backslash \{v_i\} \subseteq \sigma$ which contradicts that $\sigma$ is face of $K$. 
  But by our assumption, $(J\backslash\{v_j\}) \cup \{v_i\}$
  is not a face of $K$, which implies that $\sigma$ is not a face either, contradiction. So by definition, $K$ is shifted.
  \end{proof}
  \nn

   For the complex $K(\mathbf{N})$ defined in Definition~\ref{Defi:KN},
 we can judge when $K(\mathbf{N})$ is a shifted complex by the following theorem.

 \begin{thm} \label{Thm:Taut-Shifted-Complex}
   The complex $K(\mathbf{N})$ defined by $\mathbf{N}=\left\{N_1, \cdots, N_r\right\}$ is shifted if and only if
 $N_1, \cdots, N_r$ can be ordered to be a monotone sequence with respect to set inclusion.  
   \end{thm}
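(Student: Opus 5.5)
The plan is to prove the two directions separately. Necessity uses the definition of shiftedness applied to explicitly chosen faces. Sufficiency uses the criterion of Lemma~\ref{Lem:Shift-Equiv} together with an explicit labeling. Throughout, write $\mathbf{a}^c=[m]\backslash\mathbf{a}$ and recall that the minimal non-faces of $K(\mathbf{N})$ are $\widetilde N_i=N_i\cup\{v_i\}$ with $N_i\subseteq\mathbf{a}^c$. A subset of $[m]$ is a face if and only if it contains no $\widetilde N_i$. In particular every subset of $\mathbf{a}^c$ is a face.

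\emph{Necessity.} Suppose $K(\mathbf{N})$ is shifted with respect to some total order on $[m]$, but two sets $N_i,N_j$ are incomparable. Pick $x\in N_i\backslash N_j$ and $y\in N_j\backslash N_i$, and suppose without loss of generality that $x$ precedes $y$ in the order.
\begin{itemize}
\item Consider $\sigma=(\mathbf{a}^c\backslash\{x\})\cup\{v_i\}$. The only $\widetilde N_l$ it could contain is $\widetilde N_i$, which is excluded because $x\in N_i$. So $\sigma$ is a face, and it contains $y$.
\item Swapping $y$ for the earlier vertex $x$ gives $(\mathbf{a}^c\backslash\{y\})\cup\{v_i\}$. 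This set contains $\widetilde N_i$ because $y\notin N_i$, so it is a non-face.
\end{itemize}
This contradicts shiftedness. If instead $y$ precedes $x$, the symmetric argument with $v_j$ in place of $v_i$ gives the same contradiction. Hence the $N_i$ form a chain under inclusion.

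\emph{Sufficiency.} Reindex so that $N_1\supseteq N_2\supseteq\cdots\supseteq N_r$. For $w\in\mathbf{a}^c$ let $k(w)=\#\{i: w\in N_i\}$. Then $\{i:w\in N_i\}=\{1,\dots,k(w)\}$, and $N_l=\{w\in\mathbf{a}^c: k(w)\geq l\}$. Define the labeling as follows:
\begin{enumerate}
\item list the vertices of $\mathbf{a}^c$ in increasing order of $k(w)$, breaking ties arbitrarily;
\item insert each $v_l$ immediately after the block of vertices with $k(w)<l$ and before those with $k(w)\geq l$;
\item where several $v$'s sit at the same place, list them with indices increasing.
\end{enumerate}
Now check the condition of Lemma~\ref{Lem:Shift-Equiv} for $J=\widetilde N_i$, a vertex $u\in J$, and a later vertex $u'\notin J$. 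The goal is that $(J\backslash\{u\})\cup\{u'\}$ contains some $\widetilde N_l$.
\begin{itemize}
\item \emph{$u=v_i$.} A later vertex is either some $v_l$ with $l\geq i$, or some $w\in\mathbf{a}^c$ with $k(w)\geq i$. In the second case $w\in N_i\subseteq J$, so this case is excluded. In the first case $N_i\cup\{v_l\}\supseteq\widetilde N_l$, since $N_l\subseteq N_i$.
\item \emph{$u=w\in N_i$ and $u'=w'\in\mathbf{a}^c$ later than $w$.} Then $k(w')\geq k(w)\geq i$, so $w'\in N_i$ and $w'\in J$; this case is vacuous.
\item \emph{$u=w\in N_i$ and $u'=v_l$ later than $w$.} Then $l>k(w)\geq i$. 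Hence $N_l\subseteq N_i$, and $w\notin N_l$ because $k(w)<l$. So $N_l\subseteq N_i\backslash\{w\}$, and the new set contains $\widetilde N_l$.
\end{itemize}
Thus $K(\mathbf{N})$ is shifted.

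The main obstacle is designing the interleaved order in sufficiency. Each $v_l$ must come after every vertex of $\mathbf{a}^c\backslash N_l$, since otherwise swapping $v_l$ out of $\widetilde N_l$ lands inside $\mathbf{a}^c$, which is a face. This is what forces the placement by $k(w)$. Once that order is fixed, the case check above is routine.
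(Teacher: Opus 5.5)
Your proof is correct and is essentially the paper's. Your labeling (vertices of $\mathbf{a}^c$ sorted by $k(w)$, with each $v_l$ inserted between the blocks $k(w)<l$ and $k(w)\geq l$) is exactly the order $\prec$ used in the paper, checked via Lemma~\ref{Lem:Shift-Equiv}. Your necessity argument uses the same swap trick: exchange an element of $N_i$ for one outside it, inside a set whose only vertex from $\mathbf{a}$ is $v_i$. The only difference is organizational: you get a contradiction directly from one incomparable pair $N_i,N_j$, which is slightly cleaner than the paper's route of first showing that some $N_{i_1}$ contains all the others and then iterating.
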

   \begin{proof}
     For the sufficiency, assume that $N_1\supseteq N_2\supseteq \cdots \supseteq N_r$. Up to relabeling of the vertices of $K(\mathbf{N})$, we can assume
     $$N_1 =\{v_{p_1},\cdots, v_m\}, \ \ N_2=
     \{v_{p_2},\cdots, v_m\}, \ \
     N_r= \{v_{p_r},\cdots, v_m\} $$
     where $r<p_1 \leq p_2 \leq \cdots \leq p_r \leq m$.
     Note that $ v_{r+1},\cdots, v_{p_1-1} $ are all the vertices not in $\widetilde{N}_1 \cup \cdots \cup \widetilde{N}_r $. Next, define a new order $\prec$ of the vertices of $K(\mathbf{N})$ by:
     $$ v_{r+1} \prec \cdots \prec v_{p_1-1} \prec \underset{\widetilde{N}_1 \cup \{v_2,\cdots,v_r\}}{\underbrace{ v_1 \prec v_{p_1} \prec \cdots \prec \underset{\widetilde{N}_2 \cup \{v_3,\cdots,v_r\}}{\underbrace{v_2 \prec v_{p_2} \prec \cdots \prec  \underset{ \scalebox{1.2}{$\cdots$} \ \widetilde{N}_r\quad \  \ \, }{\underbrace{v_r \prec v_{p_r} \prec \cdots \prec v_m}}}}}},$$ 
     where if $p_i=p_{i+1}$, let $v_i \prec v_{i+1} \prec v_{p_i}=v_{p_{i+1}}$. 
    If we relabel the vertices of $K(\mathbf{N})$ by $1$ through $m$ with respect to $\prec$, it is easy to check that the minimal non-faces $\widetilde{N}_1,\cdots, \widetilde{N}_r$ of $K(\mathbf{N})$ satisfy the condition in Lemma~\ref{Lem:Shift-Equiv}. Indeed, if there is a vertex $v'\notin \widetilde{N}_i$ and a vertex $v\in \widetilde{N}_i$ with $v\prec v'$, then $v'$ can only come from $\{v_{i+1},\cdots, v_{r}\}$. So $(\widetilde{N}_i\backslash \{v\}) \cup \{v'\}$ contains at least one of 
    $\widetilde{N}_{i+1},\cdots, \widetilde{N}_{r}$, which implies that $(\widetilde{N}_i\backslash \{v\}) \cup \{v'\}$ is not a face of $K(\mathbf{N})$. Then it follows from
    Lemma~\ref{Lem:Shift-Equiv} that $K(\mathbf{N})$ is shifted.\n
     
     To prove the necessity, assume that $K(\mathbf{N})$ is a shifted complex with respect to a labeling $\{v_1,\cdots, v_m\}$ of its vertices. \n
     
     \textbf{Claim:} There exists a subset $N_i$ which contains every $N_j$, $1\leq j\neq i \leq r$.\n
     
      Indeed, if no such a $N_i$ exists, 
     then we have a vertex
      $v_{a_i} \in (\bigcup_{j\neq i} N_j)\backslash N_i$ for every $1\leq i \leq r$. 
         Without loss of generality, assume 
      $a_1 \leq \cdots\leq a_r$. Then $v_{a_1}$ is contained in some $N_i$ where $2\leq i \leq r$. Clearly, $v_{a_i}\neq v_{a_1}$ and so $a_1 < a_i$ by our assumption.
      By Lemma~\ref{Lem:Shift-Equiv}, the set
     $(\widetilde{N}_i\backslash \{v_{a_1}\})\cup \{v_{a_i}\}$ is not a face of $K(\mathbf{N})$.
  But $(\widetilde{N}_i\backslash \{v_{a_1}\})\cup \{v_{a_i}\}$ does not 
  contain any minimal non-face $\widetilde{N}_1,\cdots, \widetilde{N}_r$, which is a contradiction. So the claim
  is proved.\n

   By the above claim, there is some $1\leq i_1\leq r$ such that $N_{i_1}$ contains every $N_j$, $1\leq j\neq i_1 \leq r$. Moreover, we can apply the same argument to
   the collection $\{N_1,\cdots, N_{i_1-1}, N_{i_1+1},\cdots, N_r\}$ and deduce that there also exists a set $N_{i_2}$ that contains all other sets
   in this collection. So by iterating the above argument, we can order $N_1,\cdots, N_r$ to be 
     a monotonic sequence $N_{i_1}\supseteq N_{i_2}\supseteq \cdots \supseteq N_{i_r}$.
     This finishes the proof of the theorem.
       \end{proof}
 
 By Theorem~\ref{Thm:Main-1-Repeat}, the minimal non-faces of a taut simplicial complex are pairwise disjoint.
 Then from Theorem~\ref{Thm:Taut-Shifted-Complex},
 we can deduce that the only
 shifted taut simplicial complexes are of the form
 $\Delta^{[k]}$, $\partial \Delta^{[l]}$ or
 $\Delta^{[k]}*\partial \Delta^{[l]}$ where $k, l\geq 1$.

\vskip .4cm

\section{Operations that preserve weakly tautness}
\label{Sec:Operations}
We have already shown in Lemma~\ref{Lem:Weak-Taut-Join}
that taking the join with a weakly taut simplicial complex preserves
the weakly tautness of a simplicial complex. 
In the following, we show that there is another type of operation on simplicial complexes called
 \emph{simplicial wedge} which also preserves the weakly tautness. This operation was originally introduced by Provan and Billera~\cite[p.\,578]{ProBil80}.

\begin{defi}[Simplicial Wedge]
Suppose $K$ is a simplicial complex with $m$ vertices. Choose a fixed vertex $v$ of $K$ and define a new simplicial complex $K(v)$ with $m+1$ vertices
\begin{equation} \label{Equ:K-v}
K(v):= ( \overline{v_1 v_2} * \link_K v ) \cup
  \big(( v_1 \cup v_2) * (K \setminus v) \big)
\end{equation} 
where $\overline{v_1 v_2}$ is a $1$-simplex spanned by two new vertices $v_1$ and $v_2$. The $K(v)$ is called the \emph{simplicial wedge of $K$ on $v$}.
 \end{defi}
 
 \begin{lem} \label{Lem:Wedge-Equal}
  Let $K$ be a simplicial complex. For any vertex $v$ of $K$, 
  $$\widetilde{D}(K(v))=\widetilde{D}(K).$$
 \end{lem}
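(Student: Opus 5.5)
We compare the two sides through Hochster's formula~\eqref{Equ:D-tilde-K-J}, $\widetilde{D}(L)=\sum_{J}\widetilde{tb}(L|_J)$, and match full subcomplexes of $K(v)$ with those of $K$. Write $V(K(v))=([m]\setminus\{v\})\cup\{v_1,v_2\}$. For $I\subseteq [m]\setminus\{v\}$ there are four full subcomplexes of $K(v)$ lying over $I$, with vertex sets $I$, $I\cup\{v_1\}$, $I\cup\{v_2\}$ and $I\cup\{v_1,v_2\}$. The plan is to express each of them through full subcomplexes of $K$, and then to check that the four contributions together give exactly the two terms $\widetilde{tb}(K|_I)+\widetilde{tb}(K|_{I\cup\{v\}})$. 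Summing over $I$ then proves the lemma.

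First we identify the four pieces from the definition~\eqref{Equ:K-v}.
\begin{itemize}
\item The full subcomplex on $I$ is $(K\setminus v)|_I=K|_I$.
\item On $I\cup\{v_1\}$ we get $v_1*(K|_I)$, because $(v_1\cup v_2)*(K\setminus v)$ restricted there is already a cone containing $v_1*\link_K v|_I$. This is a cone, so it is contractible and $\widetilde{tb}=0$. The same holds for $I\cup\{v_2\}$.
\item On $I\cup\{v_1,v_2\}$ we get
$$K(v)|_{I\cup\{v_1,v_2\}}=(\overline{v_1v_2}*\link_K v|_I)\cup\big(\{v_1,v_2\}*K|_I\big).$$
\end{itemize}

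The key step is to show that this last complex is homotopy equivalent to $K|_{I\cup\{v\}}=(v*\link_K v|_I)\cup_{\link_K v|_I}K|_I$, or at least has the same reduced homology. We write $S=\{v_1,v_2\}*K|_I$, which is the suspension of $K|_I$, and $T=\overline{v_1v_2}*\link_K v|_I$, which is contractible. Their intersection is $\{v_1,v_2\}*\link_K v|_I$, the suspension of $\link_K v|_I$.

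There are two routes. The direct one is to observe that $K(v)|_{I\cup\{v_1,v_2\}}$ is obtained by attaching a cone on $\mathrm{Susp}(\link_K v|_I)$ to $\mathrm{Susp}(K|_I)$ along the suspended inclusion. That makes it homotopy equivalent to the mapping cone of $\mathrm{Susp}(i)$, which is $\mathrm{Susp}$ of the mapping cone of $i$. Meanwhile $K|_{I\cup\{v\}}$ is homotopy equivalent to the mapping cone of $i:\link_K v|_I\hookrightarrow K|_I$. So the claim becomes
$$\widetilde{H}_{n+1}(K(v)|_{I\cup\{v_1,v_2\}})\cong \widetilde{H}_n(K|_{I\cup\{v\}}).$$
The alternative route is Mayer--Vietoris: compare the sequence for $T\cup S$ with the suspension of the sequence~\eqref{Equ:Exact-Seq} and apply the five lemma. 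Either way the total reduced Betti numbers agree.

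The main obstacle is handling the degenerate cases carefully. These are $I=\varnothing$, and $\link_K v|_I=\{\varnothing\}$, where suspension conventions matter. For example, the suspension of $\{\varnothing\}$ is the two points $\{v_1,v_2\}$, and with the convention $\widetilde{\beta}_{-1}(\{\varnothing\})=1$ the mapping-cone argument still goes through. I would check these cases by hand.

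With the local identity established, summing over all $I$ gives
$$\widetilde{D}(K(v))=\sum_I\big(\widetilde{tb}(K|_I)+0+0+\widetilde{tb}(K|_{I\cup\{v\}})\big)=\widetilde{D}(K).$$
A quicker alternative is the standard identity $\mathcal{Z}_{K(v)}\cong$ a space with the same total Betti number, but I would follow the elementary Hochster route above, since it uses only tools already present in the paper.
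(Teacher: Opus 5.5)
Your proposal is correct and follows essentially the same route as the paper. You split the Hochster sum over $I\subseteq V(K\setminus v)$ into the pieces on $I$, $I\cup\{v_1\}$, $I\cup\{v_2\}$ and $I\cup\{v_1,v_2\}$, observe that the middle two are cones, and identify the last with the suspension of $K|_{I\cup\{v\}}$. The paper simply asserts this homeomorphism, whereas your decomposition $T\cup S\cong \mathrm{Susp}\big(K|_I\cup C(\link_K v|_I)\big)$ actually justifies it. One small slip: that complex is homotopy equivalent to the suspension of $K|_{I\cup\{v\}}$, not to $K|_{I\cup\{v\}}$ itself, as your own shifted isomorphism $\widetilde{H}_{n+1}\cong\widetilde{H}_n$ shows; this does not affect $\widetilde{tb}$, so the argument stands.
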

 \begin{proof}
  By definition, $V(K(v))=V(K\setminus v) \cup \{v_1,v_2\}$. Observe that 
  \begin{itemize}
  \item If $J\subseteq V(K\setminus v)$,
  $K(v)|_J = K|_J$.\n

  \item If $J\subseteq V(K\setminus v)$,
  $\|K(v)|_{J\cup\{v_1, v_2\}}\|$ is homeomorphic to the suspension
  of $\|K|_{J\cup\{v\}}\|$. 
   So
    $\widetilde{tb}(K(v)|_{J\cup\{v_1, v_2\}})=\widetilde{tb} (K|_{J\cup\{v\}})$.\n
   
   \item If $J\subseteq V(K(v))$ contains $v_1$ or $v_2$ but not both, then $K(v)|_J$ is contractible. 
    
   \end{itemize}
  \n
  So by the Hochster's formula~\eqref{Equ:D-tilde-K-J} and the definition of $K(v)$, we obtain 
   \begin{align*}
     \widetilde{D}(K(v)) &=\sum_{J\subseteq V(K\setminus v)} \widetilde{tb}(K(v)|_J) +
     \sum_{J\subseteq V(K\setminus v)} \widetilde{tb}(K(v)|_{J\cup\{v_1, v_2\}}) \\
         &=\sum_{J\subseteq V(K\setminus v)} \widetilde{tb}(K|_J) +
     \sum_{J\subseteq V(K\setminus v)} \widetilde{tb}(K|_{J\cup \{v\}}) = \widetilde{D}(K).
   \end{align*}
 
  Another way to prove this lemma is using the result of Bahri-Bendersky-Cohen-Gitler~\cite[Corollary 7.6]{BBCG15} which says that
  $\mathcal{Z}_{K(v)}$ and $\mathcal{Z}_K$ have isomorphic ungraded cohomology rings. So by~\eqref{Equ:DK-ZK}, 
   $\widetilde{D}(K(v)) = tb(\mathcal{Z}_{K(v)})=tb(\mathcal{Z}_{K})=\widetilde{D}(K)$.
 \end{proof}

 \begin{thm} \label{Thm:wedge-Weak-Taut}
   Let $v$ be an arbitrary vertex of a simplicial complex $K$. Then $K(v)$ is weakly taut if and only if $K$ is weakly taut. 
 \end{thm}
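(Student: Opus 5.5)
By Lemma~\ref{Lem:Wedge-Equal}, $\widetilde{D}(K(v))=\widetilde{D}(K)$. Since $|V(K(v))|=m+1$, the theorem reduces to the combinatorial identity
\[
\mdim(K(v))=\mdim(K)+1 .
\]
Granting this, we get
\[
2^{(m+1)-\mdim(K(v))-1}=2^{m-\mdim(K)-1},
\]
so $\widetilde{D}(K(v))$ attains its lower bound~\eqref{Equ:uto-mdim} exactly when $\widetilde{D}(K)$ does. The whole proof therefore comes down to determining the maximal simplices of $K(v)$ in terms of those of $K$.

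To determine them, split the maximal simplices $\xi$ of $K$ into two classes.

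\emph{Maximal simplices containing $v$.} Write $\xi=\{v\}\cup\tau$ with $\tau$ a maximal simplex of $\link_K v$. Then $\{v_1,v_2\}\cup\tau$ is a simplex of $K(v)$ of dimension $\dim\xi+1$. It is maximal: any vertex $u$ added to it must lie in $V(K\setminus v)$, and then $\tau\cup\{u\}$ would have to be a face of $\link_K v$, because a face containing both $v_1$ and $v_2$ lies in $\overline{v_1v_2}*\link_K v$. This contradicts the maximality of $\tau$.

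\emph{Maximal simplices not containing $v$.} Here $\xi$ is a maximal simplex of $K\setminus v$ with $\xi\cup\{v\}\notin K$, i.e. $\xi\notin\link_K v$. Then $\xi\cup\{v_1\}$ and $\xi\cup\{v_2\}$ are simplices of $K(v)$ of dimension $\dim\xi+1$. They are maximal, since $\xi\cup\{v_1,v_2\}$ would force $\xi\in\link_K v$, and enlarging $\xi$ inside $V(K\setminus v)$ is impossible.

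Conversely, every maximal simplex of $K(v)$ arises in one of these two ways. Let $\eta$ be a maximal simplex of $K(v)$.
\begin{itemize}
\item If $\eta$ contains both $v_1$ and $v_2$, then $\eta=\{v_1,v_2\}\cup\tau$ with $\tau$ maximal in $\link_K v$.
\item If $\eta$ contains exactly one of them, say $v_1$, then $\eta=\{v_1\}\cup\sigma$ with $\sigma$ a maximal face of $K\setminus v$. Moreover $\sigma\notin\link_K v$, since otherwise $v_2$ could be added. So $\sigma$ is maximal in $K$.
\item The case where $\eta$ contains neither $v_1$ nor $v_2$ is impossible, since $v_1$ or $v_2$ can always be added to a face of $K\setminus v$.
\end{itemize}

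This gives a dimension-raising-by-one correspondence between $\Delta^{max}(K)$ and $\Delta^{max}(K(v))$ that is surjective on dimensions. Hence $\mdim(K(v))=\mdim(K)+1$, and the theorem follows.

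The main obstacle, though a mild one, is the bookkeeping in the two maximality checks. The delicate point is that a maximal face of $K\setminus v$ need not be maximal in $K$. This is exactly why the condition $\sigma\notin\link_K v$ has to be tracked in the second class. Degenerate cases, such as $\link_K v=\{\varnothing\}$ or $K\setminus v$ being empty, fit the same description: for instance, if $\{v\}$ is a maximal simplex, it corresponds to $\overline{v_1v_2}$.
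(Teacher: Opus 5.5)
Your proposal is correct and follows essentially the same route as the paper. You identify the maximal simplices of $K(v)$ as $\overline{v_1v_2}*(\xi\setminus v)$ for maximal $\xi$ with $v\in\xi$, and as $v_1*\xi$, $v_2*\xi$ for maximal $\xi$ with $v\notin\xi$; from this you get $\mdim(K(v))=\mdim(K)+1$, which you combine with $\widetilde{D}(K(v))=\widetilde{D}(K)$ from Lemma~\ref{Lem:Wedge-Equal} and $|V(K(v))|=|V(K)|+1$. Your version simply spells out the maximality checks (notably the condition $\xi\notin\link_K v$ for maximal faces of $K\setminus v$) that the paper leaves as ``easy to see.''
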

 \begin{proof}
    Let $\xi$ be a maximal simplex of $K$. If $\xi$ contains $v$, then from~\eqref{Equ:K-v} we can see that $\overline{v_1 v_2}* (\xi \setminus v) $ is a maximal simplex in $K(v)$.
  If $\xi$ does not contain $v$, then $v_1*\xi$ and
  $v_2*\xi$ are both maximal simplices of $K(v)$. Conversely, it is easy to see that any maximal simplex of $K(v)$ can be written in the form $\overline{v_1 v_2}* (\xi \setminus v)$, 
  $v_1*\xi$ or $v_2*\xi$ for some maximal simplex $\xi$ of $K$. This implies 
  $$\mdim(K(v)) =\mdim(K)+1.$$
   Then by Lemma~\ref{Lem:Wedge-Equal} and the fact that $|V(K(v))| = |V(K)|+1$, we deduce that
 $$\text{  $\widetilde{D}(K)=2^{|V(K)|-\mdim(K)-1}$ if and only if 
   $\widetilde{D}(K(v))=2^{|V(K(v))|-\mdim(K(v))-1}$.}$$
   The theorem is proved. 
 \end{proof}

  Moreover generally, for a simplicial complex $K$ with vertex set $\left\{v_1, v_2, \ldots, v_m\right\}$ and a sequence of positive integers $\mathcal{J}=\left(j_1, j_2, \ldots, j_m \right)$, one can define a new simplicial complex $K(\mathcal{J})$ on $j_1+j_2+\cdots+j_m$ new vertices, labeled
$$
v_{11}, v_{12}, \ldots, v_{1 j_1}, v_{21}, v_{22}, \ldots, v_{2 j_2}, \ldots, v_{m 1}, v_{m 2}, \ldots, v_{m j_m},
$$
with the property that
$$
\{v_{i_1 1}, v_{i_1 2}, \ldots, v_{i_1 j_{i_1}}, v_{i_2 1}, v_{i_2 2}, \ldots, v_{i_2 j_{i_2}}, \ldots, v_{i_k 1}, v_{i_k 2}, \ldots, v_{i_k j_{i_k}}\}
$$
is a minimal non-face of $K(\mathcal{J})$ if and only if $\left\{v_{i_1}, v_{i_2}, \ldots, v_{i_k}\right\}$ is a minimal non-face of $K$. Moreover, all minimal non-faces of $K(\mathcal{J})$ have this form. The above definition 
of $K(\mathcal{J})$ was introduced in~\cite[Definition 2.1]{BBCG15}. In fact, $K(\mathcal{J})$ can also be produced  
     by iteratively applying the simplicial wedge construction to $K$. So by Theorem~\ref{Thm:wedge-Weak-Taut}, we obtain the following corollary immediately.
     
     \begin{cor}
   Let $K$ be a simplicial complex with $m$ vertices.
   For a  sequence of positive integers  $\mathcal{J}=(j_1,\cdots, j_m)$, the simplicial wedge $K(\mathcal{J})$ is weakly taut if and only if $K$ is weakly taut.
     \end{cor}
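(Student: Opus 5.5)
The corollary follows from Theorem~\ref{Thm:wedge-Weak-Taut} once we know that $K(\mathcal{J})$ is obtained from $K$ by a finite sequence of simplicial wedges. We argue by induction on $\sum_{i}(j_i-1)$. When all $j_i=1$, we have $K(\mathcal{J})=K$ and there is nothing to prove. Otherwise pick an index $i$ with $j_i\geq 2$, and set $\mathcal{J}'=(j_1,\cdots,j_i-1,\cdots,j_m)$. The plan is to show
\[ K(\mathcal{J}) \cong K(\mathcal{J}')(v_{i1}), \]
the simplicial wedge of $K(\mathcal{J}')$ on the vertex $v_{i1}$, where the two new vertices replacing $v_{i1}$ are relabeled $v_{i1}$ and $v_{ij_i}$.

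To verify this isomorphism, I would compare minimal non-faces, since a simplicial complex is determined by its vertex set together with its minimal non-faces. The minimal non-faces of the wedge $L(w)$, for $L=K(\mathcal{J}')$ and $w=v_{i1}$, come in two types.
\begin{itemize}
\item A minimal non-face $N$ of $L$ with $w\notin N$ remains a minimal non-face of $L(w)$. This uses that $L(w)|_{V(L\setminus w)}=L\setminus w$ and that faces avoiding $w$ stay faces.
\item A minimal non-face $N$ of $L$ with $w\in N$ yields the minimal non-face $(N\setminus w)\cup\{w_1,w_2\}$ of $L(w)$.
\end{itemize}
Both types can be read off directly from the definition~\eqref{Equ:K-v}. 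A face of $L(w)$ either contains both $w_1,w_2$, in which case it is $\overline{w_1w_2}*\tau$ with $\tau\in\link_L w$, or it contains at most one of them, in which case it is a cone over a face of $L\setminus w$. Checking minimality in each type is a short case analysis.

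Applying this to $L=K(\mathcal{J}')$ replaces each block $\{v_{i1},\dots,v_{i(j_i-1)}\}$ occurring in a minimal non-face by the block of size $j_i$. This matches exactly the defining description of the minimal non-faces of $K(\mathcal{J})$. Vertices outside every minimal non-face need no separate treatment, because the vertex sets also match. This gives the isomorphism.

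By the induction hypothesis, $K(\mathcal{J}')$ is weakly taut if and only if $K$ is. By Theorem~\ref{Thm:wedge-Weak-Taut}, $K(\mathcal{J}')(v_{i1})\cong K(\mathcal{J})$ is weakly taut if and only if $K(\mathcal{J}')$ is, which closes the induction.

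The main obstacle is the minimal-non-face description of the simplicial wedge. This fact is asserted in the text (citing \cite{BBCG15}) but not proved here. If that citation is taken as given, the argument is immediate. Otherwise, the case check in the second paragraph is the only real work.
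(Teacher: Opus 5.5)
Your proposal is correct and follows the paper's route. The paper simply asserts, following \cite{BBCG15}, that $K(\mathcal{J})$ arises from $K$ by iterated simplicial wedges and then applies Theorem~\ref{Thm:wedge-Weak-Taut}. You additionally supply the induction on $\sum_i (j_i-1)$ and the minimal non-face check of $K(\mathcal{J})\cong K(\mathcal{J}')(v_{i1})$, which justifies that assertion; when writing out the case analysis, also note that no minimal non-face of the wedge contains exactly one of the two new vertices.
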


 \vskip 2cm

  \section*{Acknowledgment}
   The authors want to thank Professor Daisuke Kishimoto
 for some helpful suggestion, and thank Qiyuan Gu for pointing out an error in the proof of Theorem~\ref{Thm:Homotopy-Type} in the earlier version of this paper. 
 
 \vskip 1.9cm
 
 \section*{Appendix}
  
  In Table~\ref{Tab-1}, we list all the weakly taut simplicial complexes with no more than five vertices.
    In addition, we draw these weakly taut simplicial complexes in Figure~\ref{p:List-Example-1} and Figure~\ref{p:List-Example-2} below where the $2$-dimensional simplices are drawn in light grey while $3$-dimensional simplices are in dark grey.

  {\small    
    \begin{table}
	\caption{Weakly taut simplicial complexes with no more than five vertices (obtained by computer programs based on SageMath.)}  \label{Tab-1}
	\label{example-small-vertices}
		\setlength\aboverulesep{0pt}\setlength\belowrulesep{0pt}
		\begin{tabular}{|c|c|c|l|c|l|}
			\hline
			$m$ &  $\dim K$ &index& $\mathbf{f}$-vector&$\mdim K$ & maximal faces\\
			\hline\hline
			1&0&$1_1$&$[1]$&0&$[(1)]$\\
			\hline
			\multirow{2}{*}{2}&0&$2_1$&$[2]$&0&$[(1),(2)]$\\
			\cmidrule{2-6}
			&1&$2_2$&$[2,1]$&1&$[(1,2)]$\\
			\hline
			\multirow{4}{*}{3}&\multirow{3}{*}{1}&$3_1$&$[3,1]$&0&$[(1,2),(3)]$\\
			\cmidrule{3-6}
			&&$3_2$&$[3,2]$&1&$[(1,2),(1,3)]$\\
			\cmidrule{3-6}
			&&$3_3$&$[3,3]$&1&$[(1,2),(1,3),(2,3)]$\\
			\cmidrule{2-6}
			&2&$3_4$&$[3,3,1]$&2&$[(1,2,3)]$\\
			\hline
			\multirow{9}{*}{4}&1&$4_1$&$[4,4]$&1&$[(1,2),(1,3),(2,4),(3,4)]$\\
			\cmidrule{2-6}
			&\multirow{7}{*}{2}&$4_2$&$[4,3,1]$&0&$[(1,2,3),(4)]$\\
			\cmidrule{3-6}
			&&$4_3$&$[4,4,1]$&1&$[(1,2,3),(1,4)]$\\
			\cmidrule{3-6}
			&&$4_4$&$[4,5,1]$&1&$[(1,2,3),(1,4),(2,4)]$\\
			\cmidrule{3-6}
			&&$4_5$&$[4,5,2]$&2&$[(1,2,3),(1,2,4)]$\\
			\cmidrule{3-6}
			&&$4_6$&$[4,6,2]$&1&$[(1,2,3),(1,2,4),(3,4)]$\\
			\cmidrule{3-6}
			&&$4_7$&$[4,6,3]$&2&$[(1,2,3),(1,2,4),(1,3,4)]$\\
			\cmidrule{3-6}
			&&$4_8$&$[4,6,4]$&2&$[(1,2,3),(1,2,4),(1,3,4),(2,3,4)]$\\
			\cmidrule{2-6}
			&3&$4_9$&$[4,6,4,1]$&3&$[(1,2,3,4)]$\\
			\hline
			\multirow{21}{*}{5}&\multirow{4}{*}{2}&$5_1$&$[5, 7, 2]$&1&$[(1,2,3),(1,2,4),(3,5),(4,5)]$\\
			\cmidrule{3-6}
			&&$5_2$&$[5, 8, 3]$&1&$[(1,2,3),(1,2,4),(1,3,5),(4,5)]$\\
			\cmidrule{3-6}
			&&$5_3$&$[5, 8, 4]$&2&$[(1, 2, 3), (1, 2, 4), (1, 3, 5), (1, 4, 5)]$\\
			\cmidrule{3-6}
			&&$5_4$&$[5, 9, 6]$&2&$[(1, 2, 3), (1, 2, 4), (1, 3, 4), (2, 3, 5), (2, 4, 5), (3, 4, 5)]$\\
			\cmidrule{2-6}
			&\multirow{16}{*}{3}&$5_5$&$[5, 6, 4, 1]$&0&$[(1,2,3,4),(5)]$\\
			\cmidrule{3-6}
			&&$5_6$&$[5, 7, 4, 1]$&1&$[(1,2,3,4),(1,5)]$\\
			\cmidrule{3-6}
			&&$5_7$&$[5, 8, 4, 1]$&1&$[(1,2,3,4),(1,5),(2,5)]$\\
			\cmidrule{3-6}
			&&$5_8$&$[5, 8, 5, 1]$&2&$[(1,2,3,4),(1,2,5)]$\\
			\cmidrule{3-6}
			&&$5_9$&$[5, 9, 5, 1]$&1&$[(1,2,3,4),(1,2,5),(3,5)]$\\
			\cmidrule{3-6}
			&&$5_{10}$&$[5, 9, 6, 1]$&2&$[(1,2,3,4),(1,2,5),(1,3,5)]$\\
			\cmidrule{3-6}
			&&$5_{11}$&$[5, 9, 7, 1]$&2&$[(1,2,3,4),(1,2,5),(1,3,5),(2,3,5)]$\\
			\cmidrule{3-6}
			&&$5_{12}$&$[5, 9, 7, 2]$&3&$[(1, 2, 3, 4), (1, 2, 3, 5)]$\\
			\cmidrule{3-6}
			&&$5_{13}$&$[5, 10, 7, 2]$&1&$[(1,2,3,4),(1,2,3,5),(4,5)]$\\
			\cmidrule{3-6}
			&&$5_{14}$&$[5, 10, 8, 1]$&2&$[(1,2,3,4),(1,2,5),(1,3,5),(2,4,5),(3,4,5)]$\\
			\cmidrule{3-6}
			&&$5_{15}$&$[5, 10, 8, 2]$&2&$[(1,2,3,4),(1,2,3,5),(1,4,5)]$\\
			\cmidrule{3-6}
			&&$5_{16}$&$[5, 10, 9, 2]$&2&$[(1,2,3,4),(1,2,3,5),(1,4,5),(2,4,5)]$\\
			\cmidrule{3-6}
			&&$5_{17}$&$[5, 10, 9, 3]$&3&$[(1, 2, 3, 4), (1, 2, 3, 5), (1, 2, 4, 5)]$\\
			\cmidrule{3-6}
			&&$5_{18}$&$[5, 10, 10, 3]$&2&$[(1,2,3,4),(1,2,3,5),(1,2,4,5),(3,4,5)]$\\
			\cmidrule{3-6}
			&&$5_{19}$&$[5, 10, 10, 4]$&3&$[(1, 2, 3, 4), (1, 2, 3, 5), (1, 2, 4, 5), (1, 3, 4, 5)]$\\
			\cmidrule{3-6}
			&&$5_{20}$&$[5, 10, 10, 5]$&3&$[(1, 2, 3, 4), (1, 2, 3, 5), (1, 2, 4, 5), (1, 3, 4, 5), (2, 3, 4, 5)]$\\
			\cmidrule{2-6}
			&4&$5_{21}$&$[5, 10, 10, 5,1]$&4&$[[1,2,3,4,5]]$\\
			\hline
		\end{tabular}
\end{table}
 }
   
   \ \\

   \begin{figure}[h]
         \includegraphics[width=0.71\textwidth]{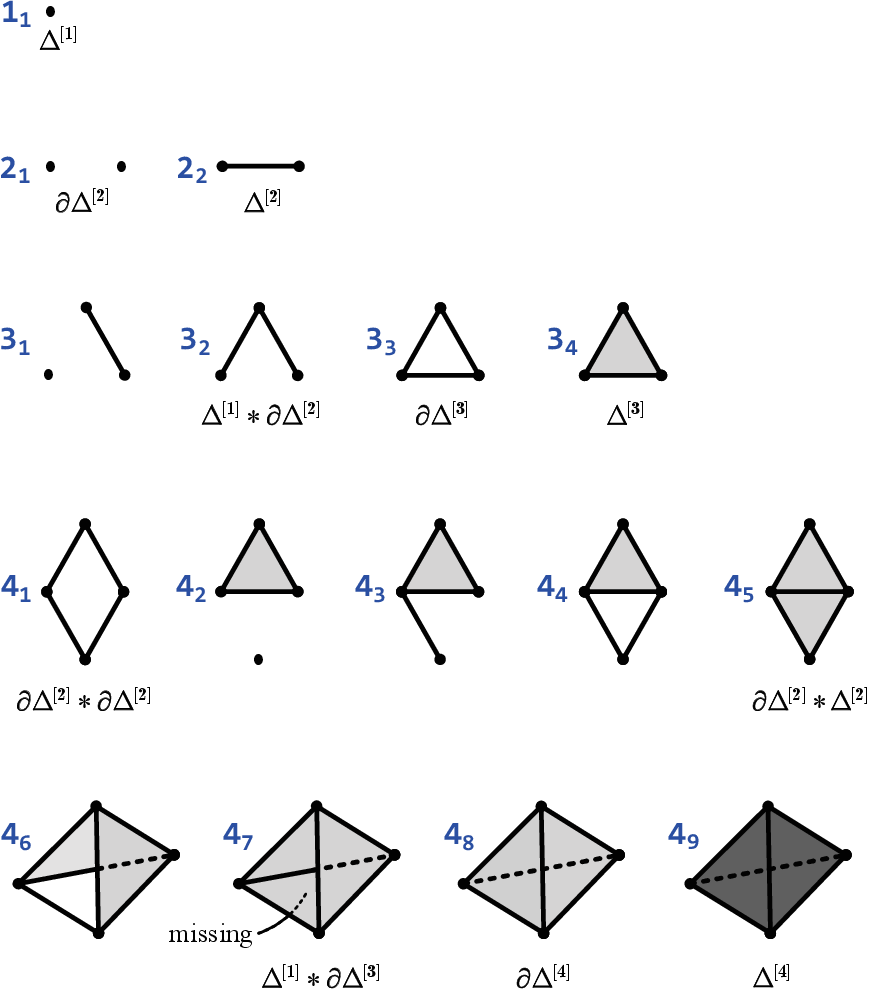}\\
          \caption{All the weakly taut simplicial complexes with less than or equal to four vertices
        }\label{p:List-Example-1}
      \end{figure}
      
  \ \\
   
   \ \\
   
   \ \\

   \ \\
      
   \begin{figure}[h]
         \includegraphics[width=0.89\textwidth]{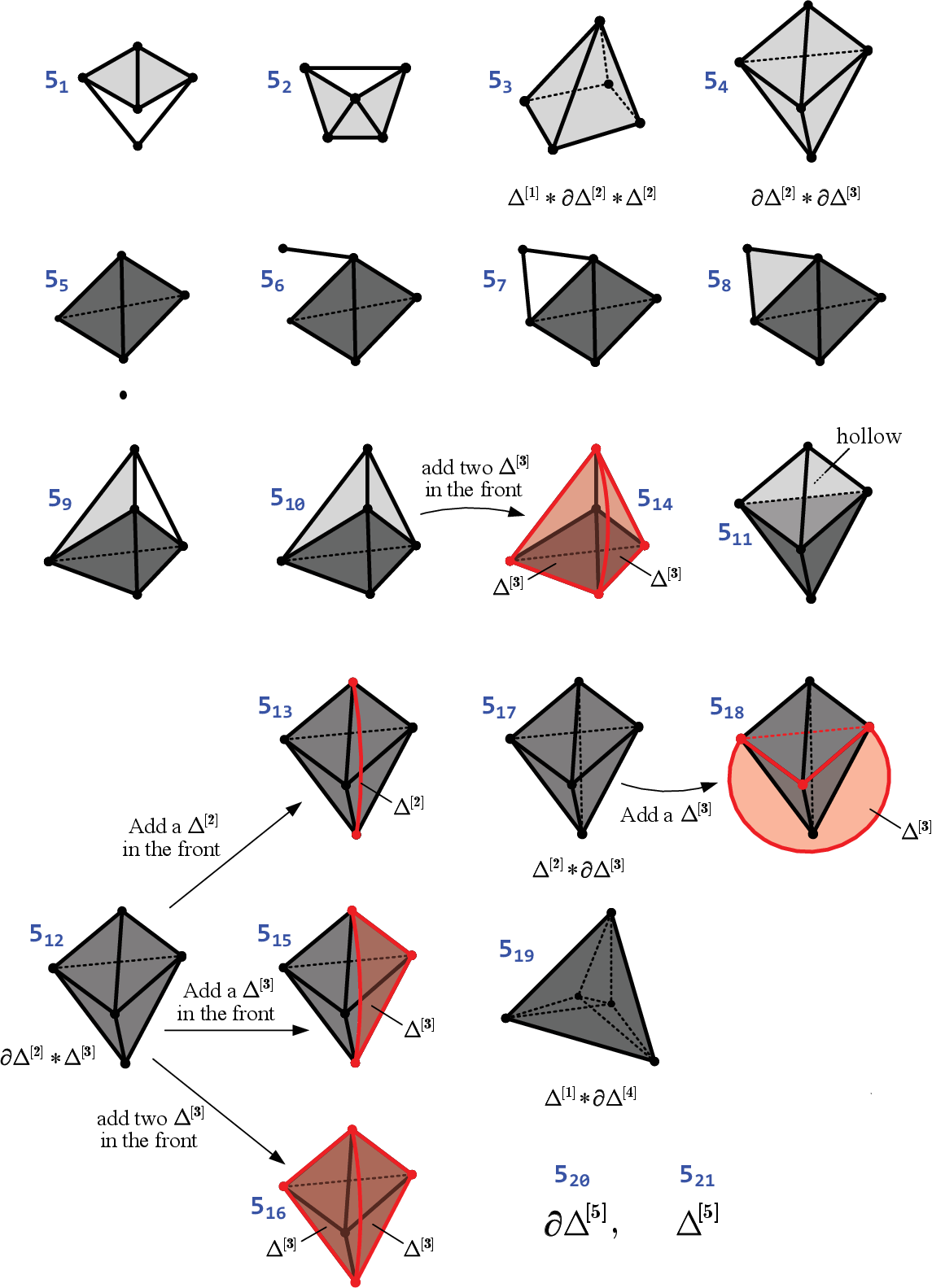}\\
          \caption{All the weakly taut simplicial complexes with $5$ vertices} \label{p:List-Example-2}
      \end{figure}

       \ \\
       
       \ \\
       
       \ \\
    
         \ \\
         
         \ \\

         \ \\
         
         \ \\ 
         
             \ \\
       \ \\
       
       \ \\

\end{document}